\begin{document}

\allowdisplaybreaks

\renewcommand{\thefootnote}{$\star$}

\renewcommand{\PaperNumber}{078}

\FirstPageHeading

\ShortArticleName{On Spinor Varieties and Their Secants}

\ArticleName{On Spinor Varieties and Their Secants\footnote{This paper is a
contribution to the Special Issue ``\'Elie Cartan and Dif\/ferential Geometry''. The
full collection is available at
\href{http://www.emis.de/journals/SIGMA/Cartan.html}{http://www.emis.de/journals/SIGMA/Cartan.html}}}

\Author{Laurent MANIVEL}

\AuthorNameForHeading{L. Manivel}

\Address{Institut Fourier,
Universit\'e de Grenoble I et CNRS,
BP 74, 38402 Saint-Martin d'H\`eres, France}
\Email{\href{mailto:Laurent.Manivel@ujf-grenoble.fr}{Laurent.Manivel@ujf-grenoble.fr}}
\URLaddress{\url{http://www-fourier.ujf-grenoble.fr/~manivel/}}

\ArticleDates{Received April 03, 2009, in f\/inal form July 21, 2009;  Published online July 24, 2009}

\Abstract{We study the secant variety of the spinor variety, focusing on its equations
of degree three and four. We show that in type $D_n$, cubic equations exist
if and only if $n\ge 9$. In general the ideal has generators in degrees
at least three and four. Finally we observe that the other Freudenthal
varieties exhibit strikingly similar behaviors.}

\Keywords{spinor variety; spin representation; secant variety; Freudenthal variety}

\Classification{14M17; 15A66; 14L35; 14N15}

\def\ot{\otimes}
\def\we{\wedge}
\def\wec{\wedge\cdots\wedge}
\def\op{\oplus}
\def\ra{\rightarrow}
\def\lra{\longrightarrow}
\def\cO{\mathcal O}
\def\fso{\mathfrak so}
\def\fsl{\mathfrak sl}
\def\PP{\mathbb P}\def\CC{\mathbb C}
\def\RR{\mathbb R}\def\HH{\mathbb H}\def\OO{\mathbb O}
\def\smc{\cdots }

\section{Introduction}

This paper grew out from a striking observation in a preprint version
of \cite{LW2}, according to
which the spinor variety of type $D_7$ was the unique compact Hermitian
symmetric space whose secant variety (with respect to its minimal equivariant
embedding) had no cubic equation.

Among the most familiar Hermitian symmetric spaces, the Scorza varieties,
which parametrize matrices (possibly symmetric or skew-symmetric) of rank one
(up to scalar), have extremely well-behaved secant varieties: the $k$-th
secant $\sigma_k(X)$ of a Scorza variety $X$ parametrizes matrices of rank
at most $k+1$, and its ideal is generated by explicit equations of
degree $k+2$, given by minors and their generalizations (a standard reference
for secant varieties, including Scorza varieties, is \cite{zak}).
In particular the f\/irst
secant variety $\sigma(X)$ is cut-out by cubics, and it was tempting to think
that $\sigma(X)$, for many equivariantly embedded rational homogeneous spaces,
or at least for the simplest ones, should have many cubic equations. Moreover,
writing down such equations explicitly would be a step towards a generalized
theory of minors.

The class of Hermitian symmetric spaces (possibly enlarged to the
so-called cominuscule varieties) being particularly well-behaved,
it was tempting to try to understand better the observation of \cite{LW2}
about the spinor variety of type $D_7$. Our f\/irst aim was to understand,
for a spinor variety $S$ of type $D_n$, the cubic equations of $\sigma(S)$.
The answer to this problem is essentially given by Theorem~\ref{cubic-dec},
where we provide the decomposition into irreducibles of the symmetric cube
of a half-spin representation, combined with Theorem~\ref{cubics}.
Our conclusion is that there are
no cubic equations for the secant variety exactly in types $D_7$ and $D_8$.
In type  $D_9$ there exists a family a cubic equations parametrized by the
other half-spin representation.

On the other hand quartic equations exist already in type $D_7$ (in type
$D_6$ and lower, the secant variety is the whole ambient space). Theorem
\ref{quartics} allows a partial  understanding of  quartic equations of
$\sigma(S)$. In particular, in any type $D_n$, with $n\ge 7$, there exist
quartic equations which cannot be derived from cubic ones.

Finally, in the last sections we put our results in a broader perspective.
The point is that spinor varieties are only one class among the so-called
Freudenthal varieties, which also include Lagrangian Grassmannians, and
Grassmannians of middle-dimensional subspaces of an even dimensional
vector space. It turns out that at least with respect to their cubic
and quartic equations, Freudenthal varieties exhibit a strikingly
uniform behavior. It would be interesting to understand better these
similarities and their limits.

\section{Spinor varieties and Pfaf\/f\/ians}

It was already observed by Chevalley that an open subset of the
spinor variety can be paramet\-ri\-zed by Pfaf\/f\/ians; more precisely,
by the complete set of sub-Pfaf\/f\/ians of a generic skew-symmetric
matrix. An interesting consequence is that the equations of the
spinor variety provide polynomial relations between these
sub-Pfaf\/f\/ians. Such relations have been known from the very
beginning of Pfaf\/f\/ian theory, the most famous one being the
rule analogous to the line or column expansion of determinants.
Many other such relations have been found, including Tanner's
relations, and its relatively recent generalization by Wenzel
(\cite{dw}, see also \cite{knuth} for a short proof).

The link  with spinor varieties has the following immediate
consequences. First, since the ideal of the spinor variety
is generated in degree two by Kostant's theorem, it follows
that all the algebraic relations between sub-Pfaf\/f\/ians are generated
by quadratic relations. Second, these quadratic relations can be
completely explicited, using the full set of quadratic equations
for the spinor variety obtained by Chevalley. Since these
equations are more convenient to express in terms of Clif\/ford
algebras, we will begin with a brief review of the tools involved.
Our basic reference is \cite{chevalley}.

\subsection{Clif\/ford algebras and spin representations}

Let $V$ be a complex vector space of dimension $2n$, endowed with a
non degenerate quadratic form $q$. Suppose that $V=E\op F$
splits as the direct sum of two maximal isotropic vector
spaces, of dimension $n$. For any $v\in V$, we can def\/ine
two natural operators on the exterior algebra~$\wedge V$.
One is the exterior product $o(v)$ by $v$; the other one is
the inner product $i(v)$ def\/ined by the contraction with $v$
through the quadratic form; both are graded operators, of
respective degree~$+1$ and~$-1$.

Let $\psi(v)=o(v)+i(v)$. An easy computation shows that the
map $\psi : V\ra {\rm End}(\wedge V)$ extends to the Clif\/ford algebra
$Cl(V)$, the quotient of the tensor algebra of $V$ by the relations
\[
v\ot w+w\ot v=2q(v,w)1.
\]
 Applying to $1\in\wedge^0V$, one obtains  a
vector space isomorphism
\[
\theta : \ Cl(V)\simeq \wedge V
\]
which is compatible
with the action of $V$ on both sides. In particular the inverse
isomorphism sends $v\wedge w\in \wedge^2 V$ to $vw-q(v,w)1\in Cl(V)$.
Note that $\wedge^2 V$ is naturally isomorphic to $\fso(V)$ and that
the resulting map $\fso(V)\ra Cl(V)$ is a morphism of Lie algebras, once
the associative algebra $Cl(V)$ has been endowed with its natural Lie
algebra structure.

Let $f\in Cl(V)$ be the product of the vectors in some basis of $F$
($f$ is well def\/ined up to a non zero scalar). Then $Cl(V)f$ is a left ideal
of $Cl(V)$ isomorphic, as a vector space, to $\wedge E$ (this is
because $Cl(V)\simeq Cl(E)\otimes Cl(F)$ and $Cl(F)f=\langle f\rangle$).
This makes of $\wedge E$ a $Cl(V)$-module, with an action of $v=v'+v''\in
V=E\oplus F$ def\/ined by $o(v')+2i(v'')$ (beware to the factor $2$ here!).

The spin group ${\rm Spin}(V)$ is def\/ined as a subgroup of the group of invertible
elements in the Clif\/ford algebra. In particular any $Cl(V)$-module
is also a ${\rm Spin}(V)$-module.
As a ${\rm Spin}(V)$-module, $\wedge E$ splits as the direct sum of the two
half-spin representations $\wedge^{\rm odd}E$ and $\wedge^{\rm even}E$,
both of dimension $2^{n-1}$, that we will also denote
$\Delta_+$ and  $\Delta_-$. A priori these two representations cannot be
distinguished.
Our convention will be the following: if $\epsilon(n)$ is the parity of~$n$,
then we let
\[
\Delta_+=\wedge^{\epsilon(n)} E.
\]
This will have the advantage that the
pure spinor def\/ined by $E$ will belong to the projectivization of
$\Delta_+$.

\subsection{Pure spinors}\label{section2.2}

The variety of maximal isotropic subspaces of $V$ has two connected
components $S_+$ and $S_-$, which are (non canonically) isomorphic.
Their linear spans in the Pl\"ucker embedding are in direct sum: there
is a splitting
\[
\wedge^nV =\wedge^nV_+ \op \wedge^nV_-
\]
into spaces of the same dimension, and $S_\pm=G(n,2n)\cap\PP(\wedge^nV_\pm)$
are linear sections of the usual Grassmannian. But the minimal embeddings
of $S_+$ and $S_-$ are embeddings in the projectivized half-spin
representations: $S_+$ is the ${\rm Spin}(V)$-orbit, in the projectivization
of $\Delta_+$, of the line $\wedge^{\rm top}E=\langle e_1\wedge\cdots\wedge e_n
\rangle$, where $e_1,\ldots ,e_n$ is a basis of $E$.
The embedded varieties
\[
S_+\subset \PP \Delta_+ \qquad \mathrm{and}\qquad
S_-\subset \PP \Delta_-
\]
are called the varieties of even and odd pure spinors, respectively. The more familiar embeddings into
$\PP(\wedge^nV_\pm)$ can be recovered from these minimal embeddings through a quadratic Veronese map.

Two maximal isotropic subspaces of $V$ are in the same
spinor variety if and only if
the dimension of their intersection has the same parity as $n$. In particular $E$ and $F$
belong to the same family if and only if $n$ is even.

If $H\subset V$ is any maximal isotropic subspace, its representative in $S_+\cup S_-$ can be obtained as follows.
Let $h\in Cl(V)$ be the product of the vectors of  a basis of $H$. Then the left ideal $Cl(V)f$ intersects the
right ideal $hCl(V)$ along a line, which can be written as $u_Hf$ for a unique line $u_H\in\PP(\wedge E)$.
According to the parity of the dimension of $H\cap E$ one gets in fact a line $u_H\in\PP(\wedge^\pm E)
=\PP\Delta_\pm$, representing the  point of the spinor variety def\/ined by $H$.

Consider for example, for any subset $I$ of $\{1,\ldots, n\}$,  the maximal isotropic space $H_I$ generated
by the vectors $e_i$ for $i\in I$, and $f_j$ for $j\notin I$, where $f_1,\ldots ,f_n$ is the basis of $F$ such that $q(e_i,f_j)=\delta_{ij}$.
Then  $u_{H_I}$ is the line generated by $e_I$,
the wedge product of the $e_i$'s, $i\in I$.

\begin{proposition}\label{3-tr}
The action of ${\rm Spin}(V)$ on $S_+$ is generically $3$-transitive $($but not $4$-transitive$)$,
in the sense that ${\rm Spin}(V)$ has an open orbit
in $S_+\times S_+\times S_+$.
\end{proposition}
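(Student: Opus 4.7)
The plan is to exploit the standard parametrization of an open neighbourhood of $[E]\in S_+$ by skew forms on $E$ and to reduce the whole question to the classical $GL_n$-orbit structure on $\wedge^2\CC^n$. Since ${\rm Spin}(V)$ acts transitively on $S_+$, I first set $H_1=[E]$. A generic second point $H_2\in S_+$ meets $E$ in a subspace of the minimal allowed dimension ($0$ if $n$ is even, $1$ if $n$ is odd), and using the unipotent radical of the parabolic stabilizing $[E]$ I can transport $H_2$ to a standard representative (for instance $[F]$ when $n$ is even). The stabilizer of the resulting pair is then the Levi factor of the parabolic, isomorphic to $GL(E)\cong GL_n$.

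For generic 3-transitivity, I would observe that a generic third $H_3\in S_+$ is transverse to both $E$ and $F$ (in the even case; the odd case is entirely analogous), so $H_3$ is the graph of a linear map $\varphi:F\to E$. The isotropy condition for $q$ forces $\varphi$ to correspond to a skew-symmetric form $\alpha\in\wedge^2 F^*$, and genericity of $H_3$ is equivalent to non-degeneracy of $\alpha$. The residual $GL(E)$-action on $\wedge^2 F^*$ is the contragredient representation; since any two non-degenerate skew forms on $\CC^n$ are $GL_n$-equivalent, this action has an open orbit, whence, combined with the first step, an open ${\rm Spin}(V)$-orbit in $S_+\times S_+\times S_+$.

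For the parenthetical failure of 4-transitivity, the stabilizer of the normalized triple is exactly the symplectic group $Sp(F,\alpha)\cong Sp_n$, and a fourth generic point yields a second non-degenerate skew form $\alpha'\in\wedge^2 F^*$. The $Sp(F,\alpha)$-action on such $\alpha'$ admits non-trivial continuous invariants, for example the characteristic polynomial of the endomorphism $\alpha^{-1}\alpha'\in{\rm End}(F)$, which rules out any open orbit on quadruples. The only technical nuisance is parity: when $n$ is odd, $E$ and $F$ lie in opposite components, so the standard representative of the generic pair must be chosen with a one-dimensional common intersection, and $Sp_n$ must be replaced throughout by the stabilizer of a skew form of maximal rank $n-1$; the key inputs---transitivity of $GL_n$ on non-degenerate skew forms and the existence of continuous $Sp$-invariants on skew forms---persist in both parities, which is why I expect the clean even-$n$ calculation to be the main body of the argument and the odd-$n$ case to require only routine book-keeping.
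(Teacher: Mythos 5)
Your even-$n$ argument is correct and rests on the same structural facts as the paper's proof: the stabilizer of the transverse pair $(E,F)$ is the Levi subgroup $GL_n$, a third point transverse to both is the graph of a map corresponding to a nondegenerate skew form, and $GL_n$ acts transitively on such forms with stabilizer $Sp_n$. The packaging differs: you exhibit the open orbit directly as the set of pairwise transverse triples, by successive transitivity (of ${\rm Spin}(V)$ on $S_+$, of the unipotent radical of the parabolic on isotropic complements of $E$, and of $GL_n$ on nondegenerate skew forms), whereas the paper fixes the explicit triple $(E,F,G)$, identifies its stabilizer as $Sp_n$, and concludes density from the count $\dim SO_{2n}-\dim Sp_n=3\dim S_+$. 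Your route buys an explicit description of the open orbit; the paper's buys brevity. You also actually prove the parenthetical failure of $4$-transitivity (the characteristic polynomial of $\alpha^{-1}\alpha'$ as a continuous invariant of the residual $Sp_n$-action), which the paper asserts but never argues; that is a genuine addition and the argument is sound.

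One caution about your odd-$n$ remarks (note the paper itself only treats $n=2m$ even). For $n$ odd, any two points of $S_+$ meet in odd dimension, so a generic third point is \emph{not} transverse to the first two: it meets each of them in a line and is therefore not a graph over $F$; likewise there are no nondegenerate skew forms on $\CC^n$, only forms of maximal rank $n-1$, whose $GL_n$-stabilizer has dimension $n(n+1)/2=\dim SO_{2n}-3\dim S_+$, so the numerology is still consistent. Hence ``the odd case is entirely analogous'' is too quick: the reduction must be redone relative to the pair $(E,F')$ with its one-dimensional intersection, and the chart of skew forms is not stable under the pair stabilizer, so this is more than routine book-keeping, even though the expected outcome (maximal-rank forms and an odd-symplectic stabilizer) is the one you state.
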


\begin{proof} We treat the case where $n=2m$ is even. Then $E$ and $F$ both belong to $S_+$, as well as the
space $G=\langle e_{2i}+f_{2i-1}, e_{2i-1}-f_{2i}, 1\le i\le m\rangle$. The stabilizer of $(E,F)$ in
$S_+\times S_+$ is easily seen to be equal to $GL_n$, embedded almost diagonally in $SO_{2n}$ by
the morphism $A\mapsto (A,{ }^tA^{-1})\in GL(E)\times GL(F)\subset GL(E\oplus F)$. The additional
condition that $G$ be preserved is then equivalent to the condition that $A$ preserved the symplectic
form $\omega$ on $E$ def\/ined by $\omega(e_{2i} ,e_{2i-1})=1$ and $\omega(e_j,e_k)=0$ if ${j,k}$ is not
of the form ${2i,2i-1}$. So the stabilizer of the triple $(E,F,G)\in S_+\times S_+\times S_+$ is a copy
of the symplectic group $Sp_n$ inside $SO_{2n}$. In particular the orbit of $(E,F,G)$ inside
$S_+\times S_+\times S_+$ has dimension $\dim SO_{2n}-\dim Sp_n = 3\dim S_+$, so it must be dense.
\end{proof}

The smooth varieties $S_+$, $S_+\times S_+$, $S_+\times S_+\times S_+$ are thus smooth
compactif\/ications of the homogeneous spaces $SO_n$,  $SO_{2n}/GL_n$
and $SO_{2n}/Sp_n$, respectively.

We thus recover the fact (which holds for any rational homogeneous variety,
equivariantly embedded) that the secant variety $\sigma(S_+)$ to the spinor
variety is quasi-homogeneous. However the second secant variety
$\sigma_2(S_+)$ is not, because the stabilizer $Sp_n$ of a general triple
in $S_+$ acts trivially on the corresponding lines of $\Delta_+$. So a general
point of $\sigma_2(S_+)$ can be put, up to the group action, in the form
$t_Eu_E+t_Fu_F+t_Gu_G$, but the triple $(t_E,t_F,t_G)\in\PP^2$ cannot be
reduced to $(1,1,1)$.

  For future use we choose the following general points of
$S_+\times S_+$ and $S_+\times S_+\times S_+$:

  If $n=2m$ is even, $E$ and $F$ belong to $S_+$, and $G$ def\/ined
above as well. Their representatives in $\PP\Delta_+$ are
\[
u_E=e_1\wedge\cdots\wedge e_n, \qquad u_F=1, \qquad
u_G=\wedge_{i=1}^m(1+e_{2i-1}\wedge e_{2i}).
\]

  If $n=2m+1$ is odd, $E$ belongs to $S_+$ but $F$ belongs to $S_-$,
so we denote by $F'$ the maximal isotropic subspace generated by $f_1,
\ldots ,f_{n-1},e_n$, which belongs to $S_+$. Then
\[
u_E=e_1\wedge\cdots\wedge e_n, \qquad u_{F'}=e_n.
\]

  The sum $u_E+u_F$ (respectively $u_E+u_{F'}$) def\/ines a
generic  point of the secant variety $\sigma(S_+)$, more precisely a point
in the open orbit.

\subsection{Pfaf\/f\/ians}
Pfaf\/f\/ians appear when one tries to parametrize
the spinor variety $S_+$, at least in a neighborhood of the point
def\/ined by $E$. This is exactly similar to the fact that minors of
a generic matrix are Pl\"ucker coordinates of a generic point of
the Grassmannian.

For this, we use our preferred basis $e_1,\ldots ,e_n$ of $E$. Let $u=(u_{ij})$
be any skew-symmetric matrix of size $n$. Then the vectors
\[
e_i(u)=e_i+\sum_{j=1}^nu_{ij}f_j, \qquad 1\le i\le n
\]
generate a maximal isotropic subspace $E(u)$ in the same family as $E$,
that is $S_+$.
A computation yields the following formula for the
corresponding pure spinor (this is a slight extension of the way
Chevalley def\/ines the Pfaf\/f\/ian in \cite[p.~57]{chevalley}):
\[
u_{E(u)}=\sum_{\ell(K)\;{\rm even}}{\rm Pf}_K(u)e_{K^c},
\]
where the sum is over the sequences $K=(k_1\smc k_\ell)$ of integers
between $1$ and $n$, of even length $\ell=\ell(K)$, $K^c$ is the
complementary subset of integers, and ${\rm Pf}_K(u)$ is
the Pfaf\/f\/ian of the submatrix of $u$ obtained by taking lines and columns
indexed by $K$. Since $E(u)$ is a~generic maximal isotropic subspace in the
same family as $E$, this formula provides a rational parametrization of
a dense open subset  of the spinor variety $S_+$.

\subsection{Equations}
Which equations characterize pure spinors? Kostant's theorem asserts that,
as for any equivariantly embedded rational homogeneous space, the ideal
of $S_+$ is generated in degree two. Moreover, the quadratic equations
of $S_+$ can be written down very explicitly, as follows.

We need to introduce the main anti-automorphism of $Cl(V)$, which is
characterized by the fact that
$\alpha (v_1\cdots v_r)=v_r\cdots v_1$ if the space generated
by $v_1,\ldots, v_r$ is isotropic.
Then, for any $u,v\in \Delta_+\subset Cl(V)$, let
\[
\beta(u,v)= uf\alpha(v)\in Cl(V)\simeq\wedge V,
\]
where the isomorphism between $Cl(V)$ and $\wedge V$ is def\/ined as
above. Denote by $\beta_k(u,v)$ the projection of $\beta(u,v)$ to
$\wedge^k V$. Then
\begin{enumerate}\itemsep=0pt
\item $\beta_k$ is non zero if and only if $n-k=2p$ is even;
\item then it is symmetric for $p$ even and skew-symmetric for $p$ odd;
\item $\beta_k$ and $\beta_{2n-k}$ coincide up to sign;
\item $u\in \Delta_+$ def\/ines a pure spinor if and only if $\beta_k(u,u)=0$
for all $k$ smaller than $n$.
\end{enumerate}
Indeed, in case $u$ represents a pure spinor,
that is, a maximal isotropic subspace
of $V$, one can check that $\beta(u,u)$ is the
product of the vectors in a basis of that space. This implies that
$\beta_k(u,u)=0$ for all $k\ne n$. Conversely, the last assertion is that
this property def\/ines $S_+$, and more precisely, we get the complete
space of quadratic equations of $S_+$ -- whence the whole ideal.

A simple computation yields the following formula, which will be of
fundamental importance in the sequel. Recall that we denoted by $F'$
the maximal isotropic subspace generated by $f_1,
\ldots ,f_{n-1},e_n$.

\begin{proposition}
Suppose $n$ is even. Then, up to a non zero constant,
\[
\beta_{2k}(u_E,u_F)=\sum_{\ell(K)=k}e_K\wedge f_K.
\]
Suppose $n$ is odd. Then, up to a non zero constant,
\[
\beta_{2k+1}(u_E,u_{F'})=\sum_{\ell(K)=k}e_K\wedge f_K\wedge e_n.
\]
\end{proposition}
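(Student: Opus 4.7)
The plan is to unpack the definition $\beta(u,v) = uf\alpha(v)$ in $Cl(V)$. Since $u_E = e_1 \cdots e_n$ and $u_F = 1$ as Clifford elements, and $\alpha$ fixes $1$, we have $\beta(u_E,u_F) = e_1 \cdots e_n f_1 \cdots f_n$; in the odd case $\alpha(e_n) = e_n$, so $\beta(u_E,u_{F'}) = e_1 \cdots e_n f_1 \cdots f_n \cdot e_n$. It then remains to compute $\theta$ of these elements in $\wedge V$ and isolate the graded piece of the correct degree.

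The first step is to use $e_i f_j = -f_j e_i$ for $i \ne j$ to pair each $e_i$ with its $f_i$; moving $f_i$ across $e_{i+1}, \ldots, e_n$ picks up a sign $(-1)^{n-i}$, giving
\[
e_1 \cdots e_n f_1 \cdots f_n = (-1)^{\binom{n}{2}} (e_1 f_1)(e_2 f_2) \cdots (e_n f_n)
\]
in $Cl(V)$. Next I would introduce $n_i := e_i f_i - 1$. Using $e_i^2 = 0$ and $e_i f_i + f_i e_i = 2$ one checks $n_i^2 = 1$, and the factors $e_j f_j$ for distinct indices commute, so the $n_i$ are mutually commuting involutions. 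Substituting $e_i f_i = 1 + n_i$ and expanding yields
\[
(e_1 f_1) \cdots (e_n f_n) = \sum_{K \subset \{1,\ldots,n\}} \prod_{i \in K} n_i.
\]

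The heart of the argument is the identity $\theta\bigl(\prod_{i \in K} n_i\bigr) = \bigwedge_{i \in K}(e_i \wedge f_i)$, which I would prove by induction on $|K|$ via the stronger claim that $\psi(n_i)(\omega) = (e_i \wedge f_i) \wedge \omega$ for every $\omega \in \wedge V$ not involving $e_i$ or $f_i$. The check is immediate from $\psi(v) = o(v) + i(v)$: such $\omega$ satisfies $i(e_i)\omega = i(f_i)\omega = 0$, and when one expands $\psi(e_i)\psi(f_i)(\omega)$ the contraction $i(e_i)(f_i \wedge \omega)$ contributes exactly $q(e_i,f_i)\omega = \omega$, which is precisely absorbed by the $-1$ in $n_i$. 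A short sign count gives $\bigwedge_{i \in K}(e_i \wedge f_i) = (-1)^{\binom{|K|}{2}} e_K \wedge f_K$, so isolating the degree-$2k$ component yields
\[
\beta_{2k}(u_E, u_F) = (-1)^{\binom{n}{2}+\binom{k}{2}} \sum_{|K|=k} e_K \wedge f_K,
\]
which is the claimed formula.

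For the odd case, the same steps apply to $e_1 \cdots e_n f_1 \cdots f_n \cdot e_n$: the identity $n_n e_n = e_n$ (coming from $e_n f_n e_n = 2e_n$) gives $(1+n_n)e_n = 2e_n$, collapsing the sum to subsets $K \subset \{1,\ldots,n-1\}$, while $e_n$ commutes with the remaining $n_i$'s. Applying $\theta$ and the claim above produces $2(-1)^{\binom{n}{2}}\sum_K (-1)^{\binom{|K|}{2}} e_K \wedge f_K \wedge e_n$; this can be formally reindexed over all $K$ of size $k$ since terms containing $n$ vanish by $e_n \wedge e_n = 0$. The one delicate verification is the multiplicativity of $\theta$ on commuting products of $n_i$'s, which is where the $-1$ in the definition of $n_i$ does its essential work -- without it $\theta$ is not multiplicative at all (e.g.\ $\theta(e_i f_i) = e_i \wedge f_i + 1$).
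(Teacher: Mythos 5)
Your computation is correct, and it is essentially the paper's route: the paper treats this as the ``simple computation'' underlying its subsequent Lemma on $\beta(e_I,e_J)$, whose proof rests on exactly the same pairing phenomenon, namely that applying the operators for $e_i$, $f_i$ to a term not involving them produces factors of the form $1+e_i\wedge f_i$, so that expanding the product gives the sum over subsets $K$ (your Proposition being the special case $I=\{1,\ldots,n\}$, $J=\varnothing$ or $\{n\}$). Your repackaging via the commuting elements $n_i=e_if_i-1$ inside $Cl(V)$ before applying $\theta$ is only a minor reorganization of that same argument, and your sign and factor-of-two bookkeeping is consistent with the ``up to a non zero constant'' statement.
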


\subsection{Pfaf\/f\/ian identities}
Applying the previous identities to the generic maximal isotropic
subspace $E(u)$ and the corresponding pure spinor,
we shall obtain quadratic identities between
sub-Pfaf\/f\/ians of a generic skew-symmetric matrix. As explained above,
we will obtain all the relations of that kind, and every algebraic
relation between these sub-Pfaf\/f\/ians can be deduced from these.

The only computation we need is that of $\beta(e_I,e_J)=
\psi(e_If \alpha(e_J)).1$.
Let $f_1,\ldots ,f_n$ be a basis of $F$ such that $q(e_i,f_j)=\delta_{ij}$.
We can suppose that $f=f_1\cdots f_n$.

\begin{lemma}
For any $I$, $J$, we have
\[
\beta(e_I,e_J)=2^{|I\cap J|}\sum_{R\subset I\Delta J}
\epsilon(I,J,R)e_{I\cap J}\wedge e_R\wedge f_{I^c\cap J^c}\wedge f_R,
\]
with $\epsilon(I,J,R)=\pm 1$. $($We denoted $I\Delta J=(I\setminus J)\cup (J\setminus I)$ and
$I^c$ the complement of $I.)$
\end{lemma}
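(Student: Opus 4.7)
The plan is to compute $\beta(e_I,e_J) = \theta(e_I\,f\,\alpha(e_J))$ by direct Clifford-algebra manipulation, followed by the translation to $\wedge V$ via the isomorphism $\theta$. First, since the $e_j$ pairwise anticommute in $Cl(V)$, one has $\alpha(e_J) = (-1)^{|J|(|J|-1)/2}\,e_J$, so it suffices (up to a global sign) to compute $\theta(e_I\,f\,e_J)$. The key input is the commutation identity
\[
f\,e_j = 2(-1)^{n-j}\,f_{[n]\setminus\{j\}} + (-1)^n\,e_j\,f,
\]
a direct consequence of $e_i f_k + f_k e_i = 2\delta_{ik}$. Moving the factors of $e_J$ past $f$ one at a time and iterating this identity, I would obtain
\[
f\,e_J = \sum_{T\subseteq J}\sigma_1(T)\cdot 2^{|T|}\,e_{J\setminus T}\,f_{[n]\setminus T},
\]
the subset $T$ recording which $e_j$ factors were ``absorbed'' against their corresponding $f_j$.

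Left-multiplying by $e_I$ annihilates every summand with $I\cap(J\setminus T)\neq\emptyset$, so only terms with $T\supseteq I\cap J$ survive; what remains is a sum of normal-form basis elements $e_A f_B$ of $Cl(V)\simeq\wedge E\otimes\wedge F$ with $A = I\cup(J\setminus T)$ and $B = [n]\setminus T$. Next, I would push each such $e_A f_B$ through $\theta$. Since $\psi(e_a) = o(e_a) + i(e_a)$ either wedges $e_a$ on the left or removes an already-present $f_a$, a direct expansion yields
\[
\theta(e_A f_B) = \sum_{A'\subseteq A\cap B}\sigma_3(A')\,e_{A\setminus A'}\wedge f_{B\setminus A'}.
\]
Setting $R := (A\cap B)\setminus A'$ and substituting the specific $A,B$ from the previous step, one checks that $R\subseteq I\Delta J$ and that every surviving wedge has the expected shape $e_{I\cap J}\wedge e_R\wedge f_{I^c\cap J^c}\wedge f_R$.

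The main obstacle is the final bookkeeping: for a fixed $R\subseteq I\Delta J$, several pairs $(T,A')$ produce the same wedge, and one must show that the signed sum of their contributions $\sigma_1(T)\sigma_3(A')\cdot 2^{|T|}$ collapses to $\pm 2^{|I\cap J|}$. Conceptually, each $k\in I\cap J$ forces $f_k$ to contract against one of the two available $e_k$'s (coming either from $I$ or from $J$), and the two choices contribute equally to produce the expected factor~$2$; by contrast, each $k\in I\Delta J\setminus R$ admits two ``dual'' contraction paths whose contributions telescope to a single signed term of unit magnitude. I expect this combinatorial sign-tracking, rather than the Clifford manipulations themselves, to be the main technical effort.
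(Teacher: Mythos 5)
Your setup is sound and your intermediate formulas are correct: the commutation identity $f e_j = 2(-1)^{n-j}f_{[n]\setminus\{j\}} + (-1)^n e_j f$, the resulting normal-ordered expansion of $f e_J$, the vanishing of all terms with $I\cap(J\setminus T)\neq\varnothing$, and the expansion of $\theta(e_Af_B)$ over contraction sets $A'\subseteq A\cap B$ all check out, and the surviving monomials do have the shape $e_{I\cap J}\wedge e_R\wedge f_{I^c\cap J^c}\wedge f_R$. But the proof stops exactly where the lemma's content lies. In your scheme, for a fixed $R\subseteq I\Delta J$ the contributing pairs $(T,A')$ are parametrized by $T=(I\cap J)\cup T'$ with $T'\subseteq (J\setminus I)\setminus R$ (and $A'$ then determined), so the coefficient of the monomial is $2^{|I\cap J|}\sum_{T'}\pm 2^{|T'|}$, and the claim is that this signed sum equals $\pm 1$. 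That requires the relative signs to factor elementwise as $(-1)^{|T'|}$, so that the sum becomes $(1-2)^{|(J\setminus I)\setminus R|}=\pm1$ --- i.e.\ for each $k\in(J\setminus I)\setminus R$ the two paths (absorb $e_k$ against $f_k$ in the Clifford step, contributing $2$, versus contract it in the $\theta$ step, contributing $1$) must carry opposite signs. You assert this (``telescope to a single signed term of unit magnitude'') but never verify it, and you yourself flag it as the main effort; since the statement is ``up to sign'', this cancellation \emph{is} the lemma, so as written the proof is incomplete. A smaller inaccuracy: your explanation of the factor $2^{|I\cap J|}$ (``$f_k$ contracts against one of the two available $e_k$'s and the two choices contribute equally'') does not describe your own computation --- for $k\in I\cap J$ the absorption in the normal-ordering step is forced (otherwise $e_Ie_{J\setminus T}=0$) and the $2$ comes directly from $f_ke_k+e_kf_k=2$, with a single surviving path.

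It is worth noting that the paper's proof is organized precisely so that no such cancellation ever arises: instead of normal-ordering in $Cl(V)$ and then applying $\theta$, it applies the operators $\psi(\cdot)$ to the vacuum in a chosen order, first producing $\prod_{j\in J}(1+f_j\wedge e_j)$, then wedging $f_{J^c}$, then using $\psi(e_k)(1+f_k\wedge e_k)=2e_k$ for $k\in I\cap J$, and finally letting $\psi(e_{I\setminus J})$ create factors $(1+e_i\wedge f_i)$. The answer then appears as a single product $\pm\,2^{|I\cap J|}\prod_{i\in I\setminus J}(1+e_i\wedge f_i)\wedge f_{I^c\cap J^c}\wedge e_{I\cap J}\wedge\prod_{j\in J\setminus I}(1+f_j\wedge e_j)$, whose expansion gives each monomial exactly once with coefficient $\pm2^{|I\cap J|}$. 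If you want to keep your route, you must carry out the sign bookkeeping (e.g.\ show the signs are multiplicative over $k\in(J\setminus I)\setminus R$ and opposite for the two paths); alternatively, reorganizing the computation as the paper does removes the need for it.
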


\begin{proof}
First we check the following
simple formula:
\[
\psi(f)\psi(e_n\ldots e_1).1=\psi(f).e_n\wec e_1=\prod_{i=1}^n
(1+f_i\wedge e_i).
\]
More generally, $\psi(f_I)\psi(e_{\bar I}).1=\prod_{i\in I}
(1+f_i\wedge e_i)$, if $\bar I$ denotes the sequence $I$
in reverse order.

Then we make the following series of computations, f\/irst without taking care
of signs:
\begin{gather*}
\beta(e_I,e_J) = \pm\psi(e_I)\psi(f_{J^c})\psi(f_J).e_{\bar J}   = \pm \psi(e_I)\psi(f_{J^c}).\prod_{j\in J}(1+f_j\we e_j) \\
\phantom{\beta(e_I,e_J)}{} = \pm \psi(e_I).\Big(f_{J^c}\we \prod_{j\in J}(1+f_j\we e_j)\Big)   = \pm \psi(e_{I\setminus J})\psi(e_{I\cap J}).\Big(f_{J^c}
\we \prod_{j\in J}(1+f_j\we e_j)\Big) \\
\phantom{\beta(e_I,e_J)}{} = \pm 2^{|I\cap J|}\psi(e_{I\setminus J}).\Big(e_{I\cap J}\we f_{J^c}
\we \prod_{j\in J\setminus I}(1+f_j\we e_j)\Big) \\
\phantom{\beta(e_I,e_J)}{} = \pm 2^{|I\cap J|}\psi(e_{I\setminus J}).\Big(f_{I\setminus J}\we f_{I^c\cap J^c}\we e_{I\cap J}
\we \prod_{j\in J\setminus I}(1+f_j\we e_j)\Big) \\
\phantom{\beta(e_I,e_J)}{} = \pm 2^{|I\cap J|}\prod_{i\in I\setminus J}(1+e_i\we f_i)\we
f_{I^c\cap J^c}\we e_{I\cap J}\we \prod_{j\in J\setminus I}(1+f_j\we e_j).
\end{gather*}
Note that the factor two appear because $\psi(e_j).(1+f_j\we e_j)=2e_j$.
Expanding the two products, one easily gets the result, up to sign.

What is the correct sign in the preceding formula? The f\/irst line
contributes by the sign $\sigma(J^c,J)$, the signature of the permutation
that puts the sequence $(J^c,J)$ in increasing order. Then the fourth
line contributes by the sign $\sigma(I\setminus J,I\cap J)$. The sixth line
contributes by the sign $\sigma(I\setminus J,I^c\cap J^c)$, and an addition
minus one to the power $|I\cap J|\times |J^c|$ because of the permutation
of the $e$ and $f$ terms. Finally, in the last lines one needs to put
$I\setminus J$ in reverse order, which contributes minus one to the power
$\binom{|I\setminus J|}{2}$. \end{proof}

  Putting things together, we obtain our quadratic equations:

\begin{theorem} For any disjoint sets of integers $R$, $S$, $T$, with $R$
and $T$ of different sizes, we have
\[
\sum_{\substack{A\cup B=R\cup T\\ A\cap B=\varnothing}}
\epsilon(S\cup A,S\cup B,R,T){\rm Pf}_{S\cup A}(u){\rm Pf}_{S\cup B}(u)=0,
\]
where $\epsilon(S\cup A,S\cup B,R,T)=\pm 1$. Moreover all the quadratic
relations between the sub-Pfaffians of a generic skew-symmetric
matrix are linear combinations of these.
\end{theorem}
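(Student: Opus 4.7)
The plan is to substitute the generic pure spinor $u_{E(u)}$ of Section~\ref{section2.2} into each of the quadratic defining equations $\beta_k(v,v)=0$ of $S_+$ for $k<n$, and to read off the induced Pfaffian identity coefficient by coefficient in the monomial basis $\{e_P\wedge f_Q\}$ of $\wedge V$.

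Concretely, I first expand
\[
\beta(u_{E(u)},u_{E(u)})=\sum_{M,N}{\rm Pf}_M(u){\rm Pf}_N(u)\,\beta(e_{M^c},e_{N^c}),
\]
the sum running over pairs $(M,N)$ of even-cardinality subsets of $\{1,\ldots,n\}$, and apply the previous lemma to each factor $\beta(e_{M^c},e_{N^c})$. Introducing the three disjoint pieces $S_0=M\cap N$, $S_1=M\Delta N$ and $S_2=(M\cup N)^c$, the lemma writes $\beta(e_{M^c},e_{N^c})$ as a signed sum, indexed by $R\subset S_1$, of basis vectors $e_{S_2\cup R}\wedge f_{S_0\cup R}$. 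Fixing a target basis vector $e_P\wedge f_Q$ and isolating its coefficient forces $R=P\cap Q$, $S_2=P\setminus Q$, $S_0=Q\setminus P$ and $S_1=(P\Delta Q)^c$, while $(M,N)$ ranges over pairs $(S_0\cup A,S_0\cup B)$ with $A\cup B=S_1$, $A\cap B=\varnothing$. Imposing the vanishing of this coefficient yields exactly the asserted identity with $S=S_0$ and $R\cup T=S_1$; the finer decomposition of $S_1$ into the distinguished pieces $R$ and $T$ plays no role in the set of summands and serves only to normalize the signs $\epsilon(S\cup A,S\cup B,R,T)$.

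I next verify that the hypothesis $|R|\ne|T|$ matches the range $k<n$ and guarantees non-triviality. When $|S_1|=0$ one has $|P|+|Q|=|S_2|+|S_0|=n$, so the putative relation lies in degree $k=n$ which is not forced to vanish on~$S_+$; and when the distinguished splitting has $|R|=|T|$, the $(A,B)\leftrightarrow(B,A)$ involution, combined with the parity rules on $\beta_k$ recalled in items~1--3 of Section~2.4 (symmetric vs.\ skew-symmetric according to $p=(n-k)/2$), forces the sum to be either identically zero or already a consequence of the relations with unequal $|R|$, $|T|$.

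For the generation statement I invoke Kostant's theorem recalled at the start of Section~2.4: the homogeneous ideal of $S_+\subset\PP\Delta_+$ is generated in degree two by the equations $\beta_k(v,v)=0$, $k<n$. Since $u\mapsto E(u)$ is a birational parametrization of the $\binom{n}{2}$-dimensional variety $S_+$, the polynomial map $u\mapsto ({\rm Pf}_K(u))_K$ is, up to a linear change of target, the restriction of the spin embedding to this affine chart; consequently the ideal of algebraic relations among the sub-Pfaffians is the pullback of the homogeneous ideal of~$S_+$, and is therefore generated by the quadratic relations derived above. The main obstacle in turning this sketch into a clean proof is the careful book-keeping of the signs $\epsilon(S\cup A,S\cup B,R,T)$ arising when one permutes the $e$- and $f$-factors in the lemma and when one passes from an ordered partition $(A,B)$ to the distinguished pair $(R,T)$, which is precisely why the theorem only specifies these signs up to $\pm 1$.
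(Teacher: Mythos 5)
Your proposal follows essentially the same route as the paper: substitute the Pfaffian parametrization $u_{E(u)}=\sum_K {\rm Pf}_K(u)\,e_{K^c}$ into Chevalley's quadratic equations $\beta_k(v,v)=0$, $k\ne n$, expand bilinearly using the lemma for $\beta(e_I,e_J)$, extract the coefficient of each basis vector $e_P\wedge f_Q$ (which correctly identifies $S=Q\setminus P$, $R=P\cap Q$, $T=(P\cup Q)^c$), and invoke Kostant's theorem together with the completeness of Chevalley's quadrics for the spanning statement. The only blemish is your aside on the excluded case $|R|=|T|$: there the coefficient lies in degree $k=n$, where $\beta_n$ is symmetric and generically nonzero on $u_{E(u)}$, so the signed sum is simply not a relation at all (rather than being identically zero or derivable from the others) -- but since the theorem excludes this case, this does not affect your argument.
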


The sign function $\epsilon(S\cup A,S\cup B,R,T)$ can be written
down explicitly but seemingly not in a~pleasant way.
These relations are probably known, but we do not know any suitable reference.

\begin{remark} To be precise, the relations of the theorem will be non
trivial only if the sizes of~$T$ and~$R$ are such that
$|T|=|R|+4p$ for some $p>0$, as follows from the properties of~$\beta$.
If we take this restriction into account, the theorem
provides a complete set of independent
quadratic relations between sub-Pfaf\/f\/ians.

In particular the relations found by Wenzel \cite{dw,knuth} have
to follow from these.
What about the contrary? It is easy to see that Wenzel's
relations are not independent.
But it is quite possible that they generate the
full set of quadratic relations.
\end{remark}

\section{Decomposition formulas: cubics}

\subsection{Tensor products of fundamental representations}

The map $\beta$ allows to establish the following decomposition formulas
\[
S^2\Delta_\pm = \wedge^nV_\pm\oplus\bigoplus_{j>0}\wedge^{n-4j}V,
\qquad \wedge^2\Delta_\pm = \bigoplus_{j>0}\wedge^{n-4j+2}V.
\]
In the f\/irst formula $\wedge^nV_+$ and $\wedge^nV_-$ are the irreducible modules of highest
weights $2\omega_n$ and $2\omega_{n-1}$, respectively.
Also recall that $\wedge^{n-1}V$ is an irreducible but not fundamental
module, its highest weight being $\omega_n+\omega_{n-1}$. This implies that
it appears inside the tensor product of the two half-spin representations,
which decomposes as
\[
\Delta_+\ot\Delta_-=\wedge^{n-1}V\op
\bigoplus_{j>0}V_{\omega_{n-2j-1}}.
\]
We aim to generalize these formulas to degree three.

\begin{proposition}\label{ext-spin} Let $i\le n-2$; then
\[
\Delta_+\ot \wedge^iV=\bigoplus_{j\ge 0}V_{\omega_n+\omega_{i-2j}}
\oplus \bigoplus_{j\ge 0}V_{\omega_{n-1}+\omega_{i-2j-1}},
\]
where the first $($resp.\ second$)$ sum is over the set of non negative
integers $j$ such that $i-2j\ge 0$ $($resp.\ $i-2j+1\ge 0)$, and we use
the convention that $\omega_0=0$.
\end{proposition}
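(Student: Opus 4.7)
The plan is to invoke the minuscule Pieri rule. Since $\Delta_+=V_{\omega_n}$ is minuscule for ${\rm Spin}(V)$, all its weights are Weyl-conjugate to $\omega_n$ and occur with multiplicity one; a standard Weyl-character computation (Brauer--Klimyk) then yields, for any dominant weight $\mu$,
\[
V_{\omega_n}\otimes V_\mu=\bigoplus_{\chi}V_{\mu+\chi},
\]
where $\chi$ runs over those weights of $\Delta_+$ for which $\mu+\chi$ is dominant, each summand appearing with multiplicity one. I would take $\mu=\omega_i$; since $i\le n-2$ we have $V_{\omega_i}=\wedge^iV$, so the proposition reduces to listing the dominant translates $\omega_i+\chi$.

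In the standard basis $\epsilon_1,\ldots,\epsilon_n$ the weights of $\Delta_+$ are $\chi=\frac{1}{2}(\eta_1\epsilon_1+\cdots+\eta_n\epsilon_n)$ with $\eta_k=\pm 1$ and an even number of $-1$'s. With $\omega_i=\epsilon_1+\cdots+\epsilon_i$, the coordinates of $\omega_i+\chi$ are $1+\eta_k/2$ for $k\le i$ and $\eta_k/2$ for $k>i$. The $D_n$ dominance rule (weakly decreasing, with $\mu_{n-1}\ge|\mu_n|$) forces $\eta_{i+1}=\cdots=\eta_{n-1}=+1$, leaves $\eta_n\in\{\pm 1\}$ free, and requires the $+1$'s among $\eta_1,\ldots,\eta_i$ to precede the $-1$'s. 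Thus the dominant translates are parametrized by a pair $(r,\eta_n)$, where $r\in\{0,\ldots,i\}$ counts the $-1$'s among $\eta_1,\ldots,\eta_i$, subject to the parity constraint that the total number of $-1$'s be even.

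The count then splits by the sign of $\eta_n$. If $\eta_n=+1$, then $r=2j$ with $i-2j\ge 0$, and using $\omega_k=\epsilon_1+\cdots+\epsilon_k$ for $k\le n-2$ together with $\omega_n=\frac{1}{2}(\epsilon_1+\cdots+\epsilon_n)$ one reads off the translate as $\omega_n+\omega_{i-2j}$. If $\eta_n=-1$, then $r=2j+1$ with $i-2j-1\ge 0$, and using $\omega_{n-1}=\frac{1}{2}(\epsilon_1+\cdots+\epsilon_{n-1}-\epsilon_n)$ the translate comes out to $\omega_{n-1}+\omega_{i-2j-1}$. Summing over admissible $j$ reproduces exactly the two series in the statement, with the convention $\omega_0=0$ absorbing the boundary cases. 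The only substantive ingredient is the minuscule tensor rule; the rest is a bounded, finite dominance check.
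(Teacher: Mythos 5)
Your argument is correct, and it proves the proposition by a genuinely different route than the paper. You invoke the minuscule tensor product rule (Brauer--Klimyk specialized to the minuscule representation $\Delta_+=V_{\omega_n}$, whose weights $\tfrac12(\pm\epsilon_1\pm\cdots\pm\epsilon_n)$ with an even number of minus signs all have multiplicity one), and then carry out the finite dominance check on $\omega_i+\chi$; your bookkeeping of the signs $\eta_k$ is accurate, the forced values $\eta_{i+1}=\cdots=\eta_{n-1}=+1$ and the ordering of the $\pm1$'s among the first $i$ coordinates are exactly right, and the parity split according to $\eta_n$ reproduces the two series $V_{\omega_n+\omega_{i-2j}}$ ($i-2j\ge 0$) and $V_{\omega_{n-1}+\omega_{i-2j-1}}$ ($i-2j-1\ge 0$), the latter range being the one the paper's dimension count actually uses. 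The paper instead argues in two steps: it first exhibits nonzero equivariant maps $\Delta_+\otimes\wedge^iV\to V_{\omega_n+\omega_{i-2j}}$ and $\to V_{\omega_{n-1}+\omega_{i-2j-1}}$ by composing the (dualized) Clifford action $\Delta_\pm\to\Delta_\pm\otimes\wedge^{2j}V$, contraction by the quadratic form, and projection to the Cartan component, which shows the right-hand side is contained in the left; it then forces equality by comparing dimensions via Weyl's dimension formula, reducing to a telescoping binomial identity. Your approach is shorter, yields multiplicity-freeness and the exact index ranges in one stroke, and requires no dimension computation; the paper's constructive proof has the separate virtue that the explicit equivariant morphisms it builds are precisely the ingredients reused later (the maps $\phi_{i,j}$, $\phi_{i,j}^-$ and $\kappa_i$ in the analysis of cubic equations of $\sigma(S_+)$), so the extra work there is not wasted.
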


\begin{proof} We f\/irst produce a non zero equivariant map from
$\Delta_+\ot \wedge^iV$ to each $V_{\omega_n+\omega_{i-2j}}$
or $V_{\omega_{n-1}+\omega_{i-2j-1}}$,
which will imply that the left hand side of the identity contains
the right hand side. Then we prove the equality by checking dimensions.

For the f\/irst step, we observe that the action of $V$ on the spin
representations induces equivariant maps $\Delta_\pm\ot \wedge^{2j}V
\ra \Delta_\pm$ and $\Delta_\pm\ot \wedge^{2j+1}V
\ra \Delta_\mp$. Dualizing, we obtain maps $\Delta_\pm
\ra \Delta_\pm\ot \wedge^{2j}V$ and $\Delta_\pm
\ra \Delta_\mp\ot \wedge^{2j+1}V$. Hence the following sequence
of equivariant morphisms,
\[
\Delta_+\ot \wedge^iV\ra \Delta_+\ot \wedge^{2j}V\ot\wedge^iV
\ra \Delta_+\ot \wedge^{i-2j}V\ra V_{\omega_n+\omega_{i-2j}},
\]
where the second arrow is a contraction map by the quadratic form,
and the last one is a Cartan product. Similarly, we can def\/ine
the sequence
\[
\Delta_+\ot \wedge^iV\ra \Delta_-\ot \wedge^{2j+1}V\ot\wedge^iV
\ra \Delta_-\ot \wedge^{i-2j-1}V\ra V_{\omega_{n-1}+\omega_{i-2j-1}}.
\]
The composed morphisms are non zero and our f\/irst claim follows.

To check that dimensions f\/it, we f\/irst note that Weyl's dimension formula
yields, for any $k\le n-2$ (including $k=0$)
\[
\dim V_{\omega_n+\omega_{k}}=\dim V_{\omega_{n-1}+\omega_{k}}=
2^{n-1}\frac{2n-2k+1}{2n-k+1}\binom{2n}{k}.
\]
The required equality is thus equivalent to the identity
\[
\binom{2n}{i}=\sum_{k=0}^i\frac{2n-2k+1}{2n-k+1}\binom{2n}{k}
=\sum_{k=0}^i\left(\binom{2n}{k}-\binom{2n}{k-1}\right),
\]
which is obvious. \end{proof}

\begin{proposition}
\begin{gather*}
\Delta_+\ot \wedge^nV_+  =  V_{3\omega_n}\oplus
\bigoplus_{j>0}V_{\omega_n+\omega_{n-2j}}, \\
\Delta_+\ot \wedge^nV_-  =  V_{\omega_n+2\omega_{n-1}}\oplus
\bigoplus_{j>0}V_{\omega_{n-1}+\omega_{n-2j-1}}.
\end{gather*}
\end{proposition}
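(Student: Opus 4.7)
The plan is to mimic the proof of Proposition~\ref{ext-spin}. For each irreducible claimed on the right-hand side I will produce a nonzero equivariant map from the left-hand side onto it, then conclude equality via Weyl's dimension formula.

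The Cartan summands $V_{3\omega_n}\subset\Delta_+\otimes\wedge^nV_+$ and $V_{\omega_n+2\omega_{n-1}}\subset\Delta_+\otimes\wedge^nV_-$ are the highest-weight components, so their presence is automatic. For the remaining summands I would repeat the construction in the proof of Proposition~\ref{ext-spin}, starting from the inclusion $\wedge^nV_\pm\hookrightarrow\wedge^nV$ and using the dualized Clifford actions $\Delta_\pm\to\Delta_\pm\otimes\wedge^{2j}V$ and $\Delta_\pm\to\Delta_\mp\otimes\wedge^{2j+1}V$, followed by the contraction $\wedge^nV\otimes\wedge^{2j}V\to\wedge^{n-2j}V$ (or its odd analogue) and a Cartan projection, to land in each $V_{\omega_n+\omega_{n-2j}}$ and each $V_{\omega_{n-1}+\omega_{n-2j-1}}$. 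Nonvanishing of the composition is tested on a highest-weight vector such as $u_E\otimes(e_1\wedge\cdots\wedge e_n)$.

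The decisive point that splits the two formulas apart is that $\Delta_+\otimes\wedge^nV_+$ and $\Delta_+\otimes\wedge^nV_-$ lie in different sectors for the action of the center of ${\rm Spin}(V)$, so they cannot share irreducible summands. Matching central characters forces the $V_{\omega_n+\omega_{n-2j}}$ family into the first formula and the $V_{\omega_{n-1}+\omega_{n-2j-1}}$ family into the second; equivalently, the construction above produces maps through $\Delta_+$ (hence into the $V_{\omega_n+\cdots}$ family) when the Clifford degree is even, and through $\Delta_-$ (hence into the $V_{\omega_{n-1}+\cdots}$ family) when it is odd.

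The final dimension check combines Weyl's formula applied to the two Cartan pieces $V_{3\omega_n}$ and $V_{\omega_n+2\omega_{n-1}}$, the formula $\dim V_{\omega_n+\omega_k}=\dim V_{\omega_{n-1}+\omega_k}=2^{n-1}\frac{2n-2k+1}{2n-k+1}\binom{2n}{k}$ recorded in the proof of Proposition~\ref{ext-spin}, and the equality $\dim\wedge^nV_\pm=\frac{1}{2}\binom{2n}{n}$. I expect the main obstacle to be the combinatorial bookkeeping that both totals match: the identities should telescope essentially as in the previous proposition, with the two Cartan terms correctly accounting for the boundary contributions which were absent when $i\le n-2$.
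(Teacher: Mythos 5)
Your overall skeleton---exhibit a nonzero equivariant map onto every claimed summand, then match dimensions via Weyl's formula---is exactly the paper's argument, and carried out properly it works. But the two ingredients you substitute for the actual dimension count both fail. The ``decisive point'' about central characters is false: $\wedge^nV_+$ and $\wedge^nV_-$ are both constituents of $\wedge^nV$, on which the centre of ${\rm Spin}(V)$ acts through $-I\in SO(2n)$ by the single scalar $(-1)^n$, so $\Delta_+\otimes\wedge^nV_+$ and $\Delta_+\otimes\wedge^nV_-$ have the \emph{same} central character; correspondingly $\omega_n+\omega_{n-2j}$ and $\omega_{n-1}+\omega_{n-2j-1}$ lie in the same class of the weight lattice modulo the root lattice (in $D_9$, for instance, $\omega_9+\omega_5$ and $\omega_8+\omega_4$). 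So the centre cannot assign the two families to the two tensor products. Your fallback remark---that the constructed maps pass through $\Delta_+$ for even Clifford degree and through $\Delta_-$ for odd degree---only shows which components you can hit; it does not exclude the other family from either side. In the paper the separation is achieved by nothing other than the dimension count, which you leave as ``expected bookkeeping''; as written, the main point is therefore unproved. And it is not a telescoping identity as in Proposition~\ref{ext-spin}: using $\dim\wedge^nV_\pm=\frac{1}{2}\binom{2n}{n}$, $\dim V_{3\omega_n}=2^{n-1}\bigl(\binom{2n}{n}-\binom{2n}{n-1}\bigr)$ and $\dim V_{\omega_n+\omega_k}=2^{n-1}\bigl(\binom{2n}{k}-\binom{2n}{k-1}\bigr)$, the required equality is the alternating sum $\frac{1}{2}\binom{2n}{n}=\binom{2n}{n}-\binom{2n}{n-1}+\cdots+(-1)^n\binom{2n}{0}$, i.e.\ the expansion of $(1-1)^{2n}$.

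A second concrete flaw: your nonvanishing test cannot detect any non-Cartan summand. The vector $u_E\otimes(e_1\wedge\cdots\wedge e_n)$ has weight $3\omega_n$, and $3\omega_n$ is not a weight of $V_{\omega_n+\omega_{n-2j}}$ for $j>0$ (nor of any $V_{\omega_{n-1}+\omega_{n-2j-1}}$), since it strictly dominates those highest weights; hence \emph{every} equivariant map into such a component annihilates this vector, for the Cartan piece only. To prove the required containments you must either evaluate the compositions on suitable lower-weight vectors or argue, as in the proof of Proposition~\ref{ext-spin}, that the composition of the dualized Clifford action, the contraction by the quadratic form, and the Cartan projection is nonzero on the relevant subspace $\Delta_+\otimes\wedge^nV_\pm$.
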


\begin{proof}
As in the proof of the Proposition above, we f\/irst notice that there exist
non-zero equivariant morphisms from the tensor products of the left hand sides
of these decomposition formulas, to any irreducible component of their
right hand sides.

There remains to check that dimensions f\/it. Weyl's dimension formula yields
\[
\dim V_{3\omega_n} = \frac{2^{n-1}}{n+1}\binom{2n}{n}
=2^{n-1}\left(\binom{2n}{n}-\binom{2n}{n-1}\right).
\]
The equality of dimensions is thus equivalent to the identity
\[
\frac{1}{2}\binom{2n}{n}=\binom{2n}{n}-\binom{2n}{n-1}+
\binom{2n}{n-2}-\cdots +(-1)^n\binom{2n}{0},
\]
which follows immediately from the binomial expansion of $(1-1)^{2n}$.\end{proof}

In a similar way, we can prove the following identity:

\begin{proposition}
\[
\Delta_+\ot \wedge^{n-1}V  = V_{2\omega_n+\omega_{n-1}}
\oplus \bigoplus_{j>0}(V_{\omega_n+\omega_{n-2j-1}}\oplus
V_{\omega_{n-1}+\omega_{n-2j}}).
\]
\end{proposition}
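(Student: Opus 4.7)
The plan is to follow the two-step strategy used in the two preceding propositions: first exhibit, for each irreducible summand on the right, a nonzero equivariant map from $\Delta_+\ot\wedge^{n-1}V$ onto it, and then verify equality by matching total dimensions via Weyl's formula. The only genuinely new ingredient is that $\wedge^{n-1}V$ is not a fundamental module (its highest weight is $\omega_n+\omega_{n-1}$), so the Cartan component is $V_{2\omega_n+\omega_{n-1}}$ and its dimension has to be computed from scratch.

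The equivariant maps are built exactly as in Proposition~\ref{ext-spin}. The Cartan projection from $V_{\omega_n}\ot V_{\omega_n+\omega_{n-1}}$ onto $V_{2\omega_n+\omega_{n-1}}$ is by definition nonzero. For $V_{\omega_n+\omega_{n-2j-1}}$ with $j\ge 1$, dualize the Clifford action to get $\Delta_+\ra\Delta_+\ot\wedge^{2j}V$, contract with the quadratic form to $\Delta_+\ot\wedge^{n-2j-1}V$ (valid since $n-2j-1\le n-3$, so the target is the fundamental module $V_{\omega_{n-2j-1}}$), and take the Cartan product. For $V_{\omega_{n-1}+\omega_{n-2j}}$ with $j\ge 1$, use instead the dualized odd Clifford map $\Delta_+\ra\Delta_-\ot\wedge^{2j-1}V$, contract to $\Delta_-\ot\wedge^{n-2j}V$, and take the Cartan product. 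Nonvanishing of each composition is checked on the highest weight vector, exactly as in the proof of Proposition~\ref{ext-spin}.

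For the dimension match, rewrite each contribution with $j\ge 1$ using the key identity $\frac{2n-2k+1}{2n-k+1}\binom{2n}{k}=\binom{2n}{k}-\binom{2n}{k-1}$ of the previous proofs; grouping the two summands indexed by $j$ yields the block $\binom{2n}{n-2j}-\binom{2n}{n-2j-2}$ (one must be mildly careful at the boundary when $n$ is even, where the first-type summand is missing at $j=n/2$, but this is absorbed by $\binom{2n}{-1}=0$), and summing over $j\ge 1$ telescopes to $\binom{2n}{n-2}$. Matching against the total dimension $2^{n-1}\binom{2n}{n-1}$ of the left-hand side thus reduces the entire proposition to the closed form
\[
\dim V_{2\omega_n+\omega_{n-1}}=2^{n-1}\left(\binom{2n}{n-1}-\binom{2n}{n-2}\right).
\]
The main obstacle is establishing this last formula. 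I would prove it by direct application of Weyl's product formula for $D_n$ to the weight $\lambda=2\omega_n+\omega_{n-1}$, whose standard coordinates are $(3/2,\ldots,3/2,1/2)$: the positive-root factors split into a product over $1\le i<j<n$ which simplifies to $\prod(2n+3-i-j)/(2n-i-j)$, and a product involving the index $n$ which evaluates to $n^2(n+1)/2$; combining the two and simplifying to the stated closed form is tractable but a bit painful (sanity check at $n=3$: both sides equal $36$).
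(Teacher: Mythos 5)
Your proposal is correct and takes essentially the same route as the paper, which proves this proposition ``in a similar way'' to the preceding ones: nonzero equivariant maps onto each summand built from the dualized Clifford action, contraction by the quadratic form and Cartan projection, followed by a dimension count via Weyl's formula. The one step you leave unfinished does close exactly as you set it up: the product over $1\le i<j\le n-1$ evaluates to $3\cdot 2^{n+1}(2n-1)!/\big((n+2)!\,(n+1)!\big)$, and multiplying by your factor $n^2(n+1)/2$ gives precisely $\dim V_{2\omega_n+\omega_{n-1}}=2^{n-1}\left(\binom{2n}{n-1}-\binom{2n}{n-2}\right)$, so the telescoped dimension count matches.
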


Of course we can deduce the corresponding identities for $\Delta_-$
by simply exchanging $\omega_n$ and $\omega_{n-1}$ in the formulas above.

\subsection{Decomposing cubics}
Now we come to our main result, a decomposition formula for $S^3\Delta_+$.
Since this formula will not be multiplicity free, we cannot proceed as
in the preceding proofs. Instead we will use induction, and
we will need some
restriction formulas for certain $\fso_{2n}$-modules to $\fso_{2n-2}$.
We will denote by $U$ the natural $\fso_{2n-2}$-module, the fundamental
weights by $\phi_1,\ldots, \phi_{n-1}$, and by~$U_\phi$ the irreducible
$\fso_{2n-2}$-module with highest weight~$\phi$. Also we will let $\delta_\pm$
be the two half-spin representations.

It is well-known that the restrictions of $\Delta_+$ and $\Delta_-$
are both equal to  $\delta_+\op\delta_-$. More generally,
\[
{\rm Res}^{\fso_{2n}}_{\fso_{2n-2}}V_{k\omega_n}=
{\rm Res}^{\fso_{2n}}_{\fso_{2n-2}}V_{k\omega_{n-1}}
=\bigoplus_{i+j=k}U_{i\phi_{n-1}
+j\phi_{n-2}}.
\]
To state our next results we will use the following notation:
we let
\begin{gather*}
S_i   =  U_{\phi_{n-1}+\phi_i}\op U_{\phi_{n-2}+\phi_i}\qquad
\mathrm{for} \quad 0\le i\le n-3, \\
S_{n-2}  =  U_{2\phi_{n-1}+\phi_{n-2}}\op U_{\phi_{n-1}+2\phi_{n-2}}, \\
S_{n-1}  =  U_{3\phi_{n-1}}\op U_{3\phi_{n-2}}.
\end{gather*}

\begin{lemma}
One has the following restriction formulas:
\begin{gather*}
Res^{\fso_{2n}}_{\fso_{2n-2}}V_{\omega_n+\omega_i} =
 S_{i}\op 2S_{i-1}\op S_{i-3}, \qquad for \quad 0\le i\le n-2,\\
Res^{\fso_{2n}}_{\fso_{2n-2}}V_{2\omega_n+\omega_{n-1}} =
 S_{n-1}\op S_{n-2}\op S_{n-3}.
\end{gather*}
\end{lemma}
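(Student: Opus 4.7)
My plan is to invoke the classical two-step branching rule $\fso_{2n}\downarrow\fso_{2n-1}\downarrow\fso_{2n-2}$, each of whose steps is multiplicity-free and governed by an interlacing condition on the coordinates in the $\epsilon$-basis. In those coordinates, one has $\omega_n+\omega_i=(\tfrac{3}{2},\ldots,\tfrac{3}{2},\tfrac{1}{2},\ldots,\tfrac{1}{2})$ with exactly $i$ entries equal to $\tfrac{3}{2}$, while $2\omega_n+\omega_{n-1}=(\tfrac{3}{2},\ldots,\tfrac{3}{2},\tfrac{1}{2})$ has $n-1$ entries equal to $\tfrac{3}{2}$. On the $\fso_{2n-2}$ side, for $j\le n-3$ the two summands of $S_j$ are the irreducibles of highest weights $(\tfrac{3}{2},\ldots,\tfrac{3}{2},\tfrac{1}{2},\ldots,\tfrac{1}{2})$ and $(\tfrac{3}{2},\ldots,\tfrac{3}{2},\tfrac{1}{2},\ldots,\tfrac{1}{2},-\tfrac{1}{2})$ (with $j$ entries equal to $\tfrac{3}{2}$ in each), realising $U_{\phi_{n-1}+\phi_j}$ and $U_{\phi_{n-2}+\phi_j}$; the summands of $S_{n-2}$ and $S_{n-1}$ admit similar descriptions after decoding $2\phi_{n-1}$, $2\phi_{n-2}$, $3\phi_{n-1}$ and $3\phi_{n-2}$ into $\epsilon$-coordinates.

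Next I enumerate the admissible intermediate weights. An $\fso_{2n-1}$-weight $\nu$ is admissible when $\lambda_j\ge\nu_j\ge\lambda_{j+1}$ and $\nu_{n-1}\ge|\lambda_n|$, and for each such $\nu$ an $\fso_{2n-2}$-weight $\mu$ must satisfy $\nu_j\ge\mu_j\ge\nu_{j+1}$ and $\nu_{n-1}\ge|\mu_{n-1}|$ with matching half-integer parities. Because each $\lambda$ in question has only two or three distinct coordinate values arranged in long runs, both $\nu$ and $\mu$ are forced everywhere except at the boundary between the runs of $\tfrac{3}{2}$'s and $\tfrac{1}{2}$'s (two choices at each step) and at the final coordinate $\mu_{n-1}$, which takes the two signs $\pm\tfrac{1}{2}$ in the first formula, with an additional $\pm\tfrac{3}{2}$ option in the second that generates the extremal summands $3\phi_{n-1}$ and $3\phi_{n-2}$ of $S_{n-1}$.

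Decoding each admissible $\mu$ into a fundamental-weight sum for $\fso_{2n-2}$ and regrouping by $S_m$ then yields the advertised formulas. The overlap between the different subcollections of $\mu$'s accounts for the multiplicity of the middle summand, while the extreme summands arise with multiplicity one. The main point requiring care is the bookkeeping of signs: with half-integer entries one must consistently match the sign of $\mu_{n-1}$ to the choice of half-spin label ($\phi_{n-1}$ versus $\phi_{n-2}$) on the $\fso_{2n-2}$ factor, and one must handle separately the reducibility of $\wedge^{n-1}U$ in the boundary cases. Alternatively, one can bypass the sign conventions entirely by an inductive argument based on Proposition~\ref{ext-spin}: summing the decompositions of $\Delta_\pm\otimes\wedge^iV$, using the restriction $\wedge^iV|_{\fso_{2n-2}}=\wedge^iU\oplus 2\wedge^{i-1}U\oplus\wedge^{i-2}U$ together with the $D_{n-1}$-analogue of Proposition~\ref{ext-spin}, and exploiting the outer automorphism to identify the restrictions of $V_{\omega_n+\omega_k}$ and $V_{\omega_{n-1}+\omega_k}$, one obtains a telescoping recursion which, starting from $\mathrm{Res}(\Delta_+)=S_0$, produces the claimed decomposition.
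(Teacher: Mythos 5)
Your main route --- the two-step interlacing branching through $\fso_{2n}\supset\fso_{2n-1}\supset\fso_{2n-2}$ --- is a legitimate and genuinely different approach from the paper's, whose proof is simply to restrict the tensor-product decompositions already established (Proposition~\ref{ext-spin} and its companions) to $\fso_{2n-2}$ and induct; your closing ``alternative'' sketch is in fact essentially the paper's argument. Your translation of the weights into $\epsilon$-coordinates is correct, and so is your identification of where the interlacing conditions leave freedom: the boundary of the run of $\frac{3}{2}$'s at each of the two steps, and the sign (or, in the second case, the value $\pm\frac{3}{2}$) of the last coordinate.

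The gap is that you never carry out the decisive enumeration and regrouping; you only assert that it ``yields the advertised formulas'', and it does not. Doing the count for $\lambda=\omega_n+\omega_i$, $1\le i\le n-2$: the admissible $\nu$ have $i$ or $i-1$ entries equal to $\frac{3}{2}$, and each such $\nu$ (with $k$ such entries) produces exactly the four $\mu$'s with $k$ or $k-1$ entries $\frac{3}{2}$ and last coordinate $\pm\frac{1}{2}$; regrouping gives $S_i\op 2S_{i-1}\op S_{i-2}$, not $S_i\op 2S_{i-1}\op S_{i-3}$. Similarly, for $2\omega_n+\omega_{n-1}$ one obtains $S_{n-1}\op 2S_{n-2}\op S_{n-3}$, not $S_{n-1}\op S_{n-2}\op S_{n-3}$. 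A dimension check settles the matter: for $n=4$ (so the $S_j$ are $D_3$-modules with $\dim S_0=8$, $\dim S_1=40$, $\dim S_2=72$, $\dim S_3=40$) one has $\dim V_{\omega_4+\omega_2}=160=\dim(S_2\op 2S_1\op S_0)$ while the printed right-hand side has dimension $152$, and $\dim V_{2\omega_4+\omega_3}=224=\dim(S_3\op 2S_2\op S_1)$ while the printed right-hand side has dimension $152$. The corrected version is moreover the one compatible with the recursion actually used in the proof of Theorem~\ref{cubic-dec}: the relation $a^{n-1}_j+\alpha_j=2a^n_j+b^n_j+b^n_{j-1}$ follows from the restriction $S_i\op 2S_{i-1}\op S_{i-2}$ (together with its mirror for $V_{\omega_{n-1}+\omega_{i-1}}$), not from the formula as printed, so the statement you were asked to prove appears to contain misprints. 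In short: your method is sound and, carried through honestly, proves the corrected lemma; as written, the proposal omits the bookkeeping and endorses a conclusion that its own computation contradicts. (Also, the worry about ``the reducibility of $\wedge^{n-1}U$'' is misplaced in this approach; the only boundary care needed is decoding $\mu$'s with $n-2$ or $n-1$ coordinates equal to $\frac{3}{2}$ into $2\phi_{n-1}+\phi_{n-2}$, $\phi_{n-1}+2\phi_{n-2}$, $3\phi_{n-1}$, $3\phi_{n-2}$, which you do mention.)
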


\begin{proof} Restrict our formulas for tensor products to $\fso_{2n-2}$
and use induction. \end{proof}

We can now prove the main result of this section.

\begin{theorem}\label{cubic-dec}
Let $a_j$, for $j\ge 0$,  be the coefficients of the power series
\[
\sum_{j\ge 0}a_jx^j=\frac{1}{(1-x^2)(1-x^3)}.
\]
Then the third symmetric power of a half-spin representation decomposes as
\[
S^3\Delta_+=V_{3\omega_n}\oplus
\bigoplus_{j>0}a_jV_{\omega_n+\omega_{n-2j}}\oplus
\bigoplus_{j\ge 4}a_{j-4}V_{\omega_{n-1}+\omega_{n-2j-1}}.
\]
\end{theorem}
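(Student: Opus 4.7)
The proof strategy I would use is induction on $n$, comparing the restrictions of both sides from $\fso_{2n}$ to $\fso_{2n-2}$, for which the branching rule $\Delta_+|_{\fso_{2n-2}}=\delta_+\op\delta_-$ holds. The base case can be done at a small value of $n$ (say $n\le 4$, where the second sum of the statement is empty and the first involves only a few components whose contribution can be checked against the dimension of $S^3\Delta_+$ via Weyl's formula). The inductive step is reduced to matching the multiplicity of each irreducible $\fso_{2n-2}$-summand on either side, the coefficients $a_j$ being precisely what is needed to make the bookkeeping work.

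For the restriction of the left-hand side, the branching rule above gives
\[
S^3\Delta_+\big|_{\fso_{2n-2}}=S^3\delta_+\op S^3\delta_-\op(S^2\delta_+\ot\delta_-)\op(\delta_+\ot S^2\delta_-).
\]
The pieces $S^3\delta_\pm$ are known by induction (the minus case follows from the symmetry $\omega_n\leftrightarrow\omega_{n-1}$). The two mixed terms are handled by plugging the quadratic expansion $S^2\delta_\pm=\wedge^{n-1}U_\pm\op\bigoplus_{j>0}\wedge^{n-1-4j}U$ into the $D_{n-1}$-analogues of Proposition~\ref{ext-spin} and the two propositions following it, applied to the tensor with $\delta_\mp$.

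For the right-hand side, the Lemma above yields the restriction of each $V_{\omega_n+\omega_{n-2j}}$ as $S_{n-2j}\op 2S_{n-2j-1}\op S_{n-2j-3}$, and its image under the swap $\omega_n\leftrightarrow\omega_{n-1}$ gives the restriction of each $V_{\omega_{n-1}+\omega_{n-2j-1}}$. The restriction of $V_{3\omega_n}$ is $S_{n-1}\op S_{n-2}$ by the general rule for $V_{k\omega_n}$ recalled above. The multiplicity of each $S_i$ on the right then becomes a short linear combination of the $a_j$'s, and matching it with the multiplicity computed on the left ultimately reduces to the recurrence $a_j=a_{j-2}+a_{j-3}-a_{j-5}$ that is equivalent to the functional equation $(1-x^2)(1-x^3)\sum_j a_j x^j=1$.

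The main obstacle is the combinatorial bookkeeping in this matching step. Each $S_i$ receives contributions from several summands on both sides, and one must keep careful track of index shifts—in particular the shift by $4$ that accounts for the constraint $j\ge 4$ in the second sum of the statement, which presumably arises from the fact that producing a $V_{\omega_{n-1}+\omega_\bullet}$ summand inside $S^3\Delta_+$ requires an odd number of "transitions" between $\delta_+$ and $\delta_-$ combined with the quartic shifts already present in $S^2\Delta_\pm$. A secondary point worth emphasizing is that, unlike in the preceding propositions where multiplicity-freeness allowed one to conclude by constructing nonzero equivariant maps and comparing dimensions, here the decomposition has multiplicities, so the dimension count alone is insufficient and one genuinely needs the finer inductive information provided by the restriction.
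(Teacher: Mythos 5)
Your mechanism is essentially the paper's own: restrict to $\fso_{2n-2}$ via $\Delta_+|_{\fso_{2n-2}}=\delta_+\op\delta_-$, compute the mixed terms $S^2\delta_\pm\ot\delta_\mp$ from the known quadratic decomposition together with Proposition~\ref{ext-spin} and its companions, and close the bookkeeping with the recurrence equivalent to $(1-x^2)(1-x^3)a(x)=1$. But as written there is a genuine gap: you propose to prove the identity by checking that both sides have the same restriction to $\fso_{2n-2}$, and equality of restrictions does not imply equality of $\fso_{2n}$-modules. Branching from $D_n$ to $D_{n-1}$ is not injective on characters: the paper itself recalls that $\Delta_+$ and $\Delta_-$ both restrict to $\delta_+\op\delta_-$ and that ${\rm Res}\,V_{k\omega_n}={\rm Res}\,V_{k\omega_{n-1}}$; likewise $V_{\omega_n+\omega_i}$ and its mirror $V_{\omega_{n-1}+\omega_i}$ have the same restriction $S_i\op 2S_{i-1}\op S_{i-3}$, because each $S_i$ is symmetric in $\phi_{n-1}$ and $\phi_{n-2}$. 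So your matching of $\fso_{2n-2}$-multiplicities cannot distinguish, say, a copy of $V_{\omega_n+\omega_{n-2j}}$ from a copy of $V_{\omega_{n-1}+\omega_{n-2j}}$, and the conclusion that $S^3\Delta_+$ equals the stated right-hand side does not follow from the restriction computation alone; your induction (and base case) would only establish an identity of restricted characters.

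The missing step, which is the first step of the paper's proof, is an a priori constraint on the constituents: since $S^3\Delta_+\subset S^2\Delta_+\ot\Delta_+$ and $S^2\Delta_+=\wedge^nV_+\op\bigoplus_{j>0}\wedge^{n-4j}V$, the tensor-product decompositions already proved show that the only irreducibles that can occur are $V_{3\omega_n}$ (with multiplicity one), $V_{\omega_n+\omega_{n-2j}}$ and $V_{\omega_{n-1}+\omega_{n-2j-1}}$ with $j>0$. One then writes $S^3\Delta_+$ with unknown multiplicities $a^n_j$, $b^n_j$ and only afterwards restricts to $\fso_{2n-2}$: with the ansatz in place, comparing multiplicities of the $S_i$ gives linear relations whose data ($\alpha_j$, $\beta_j$ and the input $a^n_1=0$) are independent of $n$, so the multiplicities are determined, independent of $n$, and the generating-function system yields $a(x)=\frac{1}{(1-x^2)(1-x^3)}-1$ and $b(x)=\frac{x^4}{(1-x^2)(1-x^3)}$ --- the shift by $4$ you attribute to ``transitions'' is simply the solution of this system, not an extra combinatorial input. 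If you add the containment argument (or, alternatively, a weight-parity/congruence argument in the spirit of Section~\ref{section5.3}) to legitimize the ansatz, your proposal becomes the paper's proof; your closing remark that multiplicities preclude a pure dimension count is correct.
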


\begin{proof} First observe that $S^3\Delta_+$ is a submodule of $S^2\Delta_+\ot
\Delta_+$. Since the decomposition of $S^2\Delta_+$ into irreducible
components only involves wedge powers of the natural representation, we easily
deduce from the formulas we already proved, that $S^3\Delta_+$ must be
a sum of modules of the form $V_{3\omega_n}$ (with multiplicity one),
and $V_{\omega_n+\omega_{n-2j}}$ or $V_{\omega_{n-1}+\omega_{n-2j-1}}$,
with $j>0$. We can thus let
\[
S^3\Delta_+=V_{3\omega_n}\oplus
\bigoplus_{j>0}a^n_jV_{\omega_n+\omega_{n-2j}}\oplus
\bigoplus_{j>0}b^n_{j}V_{\omega_{n-1}+\omega_{n-2j-1}},
\]
for some multiplicities $a^n_j$, $b^n_j$ which a priori, can depend on $n$.
We want to compute these multiplicities inductively, by restricting this
formula to $\fso_{2n-2}$. A straightforward computation yields the
following relations:
\begin{gather*}
 a^{n-1}_j+\alpha_j = 2a^n_j+b^n_j+b^n_{j-1}, \qquad
 b^{n-1}_{j-1}+\beta_{j-1} = a^n_{j}+a^n_{j-1}+2b^n_{j-1},
\end{gather*}
where for $j\ge 1$,
$\beta_j$ (resp.\ $\alpha_j$) is the number of pairs $(k,l)$
such that $k+2l=j$ and $k,l\ge 0$ (resp.\ $k\ge 0$, $l\ge 1$).
The two  equations are valid for $j=1$ if we let all the
coef\/f\/icients with index zero equal to zero. In particular
the second one yields $a^n_1=0$.

Now we can use the two
equations alternatively, to compute $b^n_j$ and $a^n_j$ by
induction on $j$. Indeed the f\/irst equation gives $b^n_j$
knowing $a^n_j$ (and coef\/f\/icients computed before), and then
the second equation gives $a^n_{j+1}$ knowing $a^n_j$ and $b^n_j$.

But since $\alpha_j$ and $\beta_j$ do not depend on $n$, and neither
does $a^n_1$, which can be seen as the input of the induction, we can
conclude that $a^n_j$ and $b^n_j$ are all independent of $n$.
So we denote them simply by  $a_j$ and $b_j$ and we let
\[
a(x)=\sum_{j>0}a_jx^j, \qquad b(x)=\sum_{j>0}b_jx^j.
\]
Our induction relations above can then be rewritten as
\begin{gather*}
a(x)+(1+x)b(x) = \frac{x^2}{(1-x)(1-x^2)}, \\
(1+x)a(x)+xb(x) = \frac{x}{(1-x)(1-x^2)}-x.
\end{gather*}
Note that the determinant of the matrix of coef\/f\/icients of $a(x)$ and $b(x)$
equals $(1+x)^2-x=1+x+x^2$. When we solve these two equations, this explains
the appearance of a factor $(1-x^3)$ at the denominators of $a(x)$ and $b(x)$.
Indeed we easily get
\[
a(x)=\frac{1}{(1-x^2)(1-x^3)}-1, \qquad b(x)=\frac{x^4}{(1-x^2)(1-x^3)}.
\]
The theorem is proved. \end{proof}

  Comparing dimensions, one gets the curious corollary:

\begin{corollary}
Let $c_p$, for $p\ge 0$,  be the coefficients of the power series
\[
\sum_{p\ge 0}c_px^p=\frac{1+x^9}{(1-x^2)(1-x^3)}.
\]
Then for all any integer $n\ge 2$, one has the identity
\[
\frac{(2^{n-1}+1)(2^{n-2}+1)}{3} = \sum_{p\ge 0}c_p
\left(\binom{2n}{n-p} -\binom{2n}{n-p-1}\right).
\]
\end{corollary}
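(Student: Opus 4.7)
The plan is to prove the identity as a purely dimensional consequence of Theorem~\ref{cubic-dec}. The first observation is that the left hand side is nothing but $\dim S^3\Delta_+$ divided by $2^{n-1}$. Indeed, since $\dim \Delta_+ = 2^{n-1}$, one has
\[
\dim S^3\Delta_+ = \binom{2^{n-1}+2}{3} = \frac{2^{n-1}(2^{n-1}+1)(2^{n-1}+2)}{6},
\]
and the factorization $2^{n-1}+2 = 2(2^{n-2}+1)$ gives exactly $(2^{n-1}+1)(2^{n-2}+1)/3$ after division by $2^{n-1}$.

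Next, I would observe that every irreducible summand appearing in Theorem~\ref{cubic-dec} has dimension of the uniform shape $2^{n-1}\bigl(\binom{2n}{k} - \binom{2n}{k-1}\bigr)$ for a suitable $k$. For $V_{3\omega_n}$ this is the Catalan-like identity appearing in the proof of the second proposition of Section~3.1 (with $k=n$). For $V_{\omega_n+\omega_k}$ and $V_{\omega_{n-1}+\omega_k}$ it follows from Weyl's formula combined with the elementary identity $\binom{2n}{k}-\binom{2n}{k-1} = \frac{2n-2k+1}{2n-k+1}\binom{2n}{k}$ already used in the proof of Proposition~\ref{ext-spin}. Summing these dimensions according to Theorem~\ref{cubic-dec} and dividing by $2^{n-1}$ then produces an expression of the form $\sum_p c_p\bigl(\binom{2n}{n-p}-\binom{2n}{n-p-1}\bigr)$, where $p$ plays the role of the shift $n-k$ between the index of the smaller fundamental weight in each component and $n$.

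The coefficients are read off directly from Theorem~\ref{cubic-dec}: $c_0 = 1$ comes from $V_{3\omega_n}$, the value $c_{2j} = a_j$ for $j \ge 1$ comes from the first infinite sum (contributing at $p=2j$), the value $c_{2j+1} = a_{j-4}$ for $j \ge 4$ comes from the second sum (contributing at $p=2j+1$), and $c_p = 0$ otherwise. Packaging the even-index and odd-index contributions as generating functions collapses them into
\[
\sum_p c_p x^p = \bigl(1 + x^9\bigr)\sum_{j\ge 0} a_j x^{2j},
\]
and substituting the defining identity of the $a_j$ into the right hand side produces the rational function of the statement. The main obstacle is nothing conceptual, only the bookkeeping of the re-indexing between the two sums of Theorem~\ref{cubic-dec} and the single sum of the corollary; once that is kept straight the rest is elementary power-series manipulation.
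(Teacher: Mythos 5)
Your overall strategy is exactly the paper's intended one: the paper proves this corollary simply by ``comparing dimensions'' in Theorem~\ref{cubic-dec}, and your reduction is the right implementation of that. Dividing $\dim S^3\Delta_+=\binom{2^{n-1}+2}{3}$ by $2^{n-1}$ does give the left hand side, each summand does have dimension $2^{n-1}\bigl(\binom{2n}{k}-\binom{2n}{k-1}\bigr)$ by the Weyl-formula identities you cite, and your bookkeeping of the coefficients is correct: $V_{3\omega_n}$ contributes $1$ at $p=0$, $V_{\omega_n+\omega_{n-2j}}$ contributes $a_j$ at $p=2j$, and $V_{\omega_{n-1}+\omega_{n-2j-1}}$ contributes $a_{j-4}$ at $p=2j+1$, so that indeed $\sum_p c_px^p=(1+x^9)\sum_{j\ge 0}a_jx^{2j}$.

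The gap is in your final sentence. Substituting $\sum_{j\ge0}a_jx^j=\frac{1}{(1-x^2)(1-x^3)}$ into $(1+x^9)\sum_{j\ge0}a_jx^{2j}$ yields $\frac{1+x^9}{(1-x^4)(1-x^6)}$, \emph{not} the rational function $\frac{1+x^9}{(1-x^2)(1-x^3)}$ of the statement: replacing $x^j$ by $x^{2j}$ doubles the exponents in the denominator as well. This is not a harmless re-indexing issue, because the identity with the printed generating function is in fact false: for $n=3$ the left hand side is $5$, while the printed coefficients $c_0=c_2=c_3=1$ give $5+5+1=11$ on the right. By contrast, the identity your computation actually establishes, with $\sum_p c_px^p=\frac{1+x^9}{(1-x^4)(1-x^6)}$, does check out (for $n=3$: $5=5$; $n=4$: $15=14+1$; $n=5$: $51=42+9$). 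So your argument is essentially complete and proves the correct statement, but as written it ends by asserting an equality of power series that does not hold; the corollary as printed contains a misprint in the denominator, and your proof should either target the corrected generating function or explicitly flag and repair that discrepancy instead of claiming the printed function is recovered.
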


Moreover, $c_p$, $p\ge 0$, is the only series with non negative
coef\/f\/icients satisfying this identity.

\section{Cubic equations of the secant variety}

\begin{theorem}\label{cubics}
Restrictions of cubics to the secant variety of the
spinor variety yields
\[
\CC[\sigma (S_+)]_3=V_{3\omega_n}^\vee\oplus \bigoplus_{i>1}V_{\omega_n
+\omega_{n-2i}}^\vee.
\]
\end{theorem}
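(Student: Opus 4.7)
The plan is to realize $\CC[\sigma(S_+)]_3$ as the image of the $\mathrm{Spin}(V)$-equivariant secant pullback
\[
\rho\colon S^3\Delta_+^\vee\lra \CC[\hat S_+\times \hat S_+],\qquad f\longmapsto\bigl((u,v)\mapsto f(u+v)\bigr),
\]
whose kernel is exactly $I(\sigma(S_+))_3$. Since $\CC[\hat S_+]_k=V_{k\omega_n}^\vee$, the image lies in the total bi-degree-$3$ part of $\CC[\hat S_+\times\hat S_+]$, and because $\rho(f)$ is symmetric under $(u,v)\leftrightarrow(v,u)$, in its swap-invariant subspace $V_{3\omega_n}^\vee\op (V_{2\omega_n}^\vee\ot\Delta_+^\vee)$. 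Applying the decomposition $V_{2\omega_n}^\vee\ot\Delta_+^\vee=V_{3\omega_n}^\vee\op \bigoplus_{j>0}V_{\omega_n+\omega_{n-2j}}^\vee$ from the proposition right after Proposition~\ref{ext-spin}, the ambient module for $\CC[\sigma(S_+)]_3$ becomes
\[
2V_{3\omega_n}^\vee\op \bigoplus_{j>0}V_{\omega_n+\omega_{n-2j}}^\vee.
\]

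Combining this with the decomposition of $S^3\Delta_+^\vee$ from Theorem~\ref{cubic-dec} already recovers most of the statement: every $V_{\omega_{n-1}+\omega_{n-2j-1}}^\vee$ appearing in $S^3\Delta_+^\vee$ cannot fit in the target and must lie in $I(\sigma(S_+))_3$; for each $j\ge 1$, $V_{\omega_n+\omega_{n-2j}}^\vee$ appears with multiplicity at most $1$ in the image, so at least $a_j-1$ of the $a_j$ source copies are killed; and $V_{\omega_n+\omega_{n-2}}^\vee$ is already absent from $S^3\Delta_+^\vee$ because $a_1=0$. It remains to verify that for $V_{3\omega_n}^\vee$ and for each $V_{\omega_n+\omega_{n-2j}}^\vee$ with $j\ge 2$, one copy genuinely survives.

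For this I would produce surviving cubics via the $\mathrm{Spin}(V)$-equivariant multiplication map
\[
\mu\colon \Delta_+^\vee\ot V_{2\omega_n}^\vee\lra S^3\Delta_+^\vee,
\]
whose source decomposes as $V_{3\omega_n}^\vee\op\bigoplus_{j\ge 1}V_{\omega_n+\omega_{n-2j}}^\vee$. By Schur's lemma combined with Theorem~\ref{cubic-dec}, $\mu$ must annihilate the $V_{\omega_n+\omega_{n-2}}^\vee$ summand (the $j=1$ case, forced by $a_1=0$) and is an embedding on the remaining irreducible components. On $V_{3\omega_n}^\vee$ the composite $\rho\circ\mu$ is already nonzero after further restriction to $S_+$, where it becomes the Cartan surjection onto $\CC[S_+]_3\simeq V_{3\omega_n}^\vee$. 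For $j\ge 2$, evaluating $\mu(\ell\ot q)=\ell\cdot q$ at a generic secant point yields the function
\[
(u,v)\longmapsto \bigl(\ell(u)+\ell(v)\bigr)\bigl(q(u)+2q^{\mathrm{pol}}(u,v)+q(v)\bigr)
\]
on $\hat S_+\times\hat S_+$, which is generically nonzero; by equivariance this forces each $V_{\omega_n+\omega_{n-2j}}^\vee$-isotypic piece of $\Delta_+^\vee\cdot V_{2\omega_n}^\vee$ to contribute one copy to $\CC[\sigma(S_+)]_3$, saturating the upper bound.

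The main obstacle is ruling out an accidental vanishing of $\rho\circ\mu$ on a specific $V_{\omega_n+\omega_{n-2j}}^\vee$ irreducible for some $j\ge 2$. By Schur, $\rho\circ\mu$ acts on each isotypic piece by a scalar, and the relevant $(2,1)+(1,2)$ bigraded contribution is a linear combination of the ``swap'' isomorphism $\ell\ot q\mapsto q\ot \ell$ and the ``polarization'' $\ell\ot q\mapsto \ell\cdot q^{\mathrm{pol}}$; ruling out cancellation amounts to computing a finite number of Schur scalars, which can be done concretely by exhibiting an explicit highest-weight vector of $V_{\omega_n+\omega_{n-2j}}^\vee$ inside $\Delta_+^\vee\cdot V_{2\omega_n}^\vee$ and checking that its value at $u_E+u_F$ (or $u_E+u_{F'}$ in the odd case) is nonzero.
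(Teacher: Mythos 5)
Your upper-bound argument is correct and takes a genuinely different route from the paper. Realizing $\CC[\sigma(S_+)]_3$ as the image of $\rho$ inside the swap-invariant, total-degree-three part of $\CC[\hat S_+\times\hat S_+]$, which by Kostant's theorem and the decomposition of $\Delta_+\ot\wedge^nV_+$ equals $2V_{3\omega_n}^\vee\op\bigoplus_{j>0}V_{\omega_n+\omega_{n-2j}}^\vee$, immediately caps each even multiplicity at one and excludes the whole odd family $V_{\omega_{n-1}+\omega_{n-2j-1}}^\vee$. The paper gets these same bounds by explicit computation: it evaluates the basis $\phi_{i,j}$ of equivariant maps at $u_E+u_F$, derives the linear dependence relations of Lemma~\ref{relations} among the $\bar\psi_{2i,2j}$, and shows $\bar\psi_{2i+1,1}=0$ for the odd family. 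Your route avoids all of that, which is a real gain.

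The gap is in the lower bound, i.e.\ in showing that for every $j\ge 2$ one copy of $V_{\omega_n+\omega_{n-2j}}^\vee$ actually survives. The step ``the function $(u,v)\mapsto(\ell(u)+\ell(v))\,q(u+v)$ is generically nonzero; by equivariance this forces each $V_{\omega_n+\omega_{n-2j}}^\vee$-isotypic piece to contribute'' is a non sequitur: elements of a fixed isotypic piece are not decomposable tensors $\ell\ot q$, and nonvanishing of $\rho\circ\mu$ on decomposable tensors only shows the composite map is nonzero, which is already fully explained by the $V_{3\omega_n}^\vee$ summand. By Schur's lemma $\rho\circ\mu$ is given by one scalar for each $j$ (source and target are multiplicity free there), and nothing you say rules out accidental vanishing of that scalar for some $j\ge 2$; note that within your own setup the scalar does vanish for $j=1$ (forced by $a_1=0$), so such vanishing is a genuine possibility and not a formality — likewise your unproved claim that $\mu$ is ``an embedding on the remaining irreducible components'' does not follow from Schur alone. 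Your closing paragraph concedes this and proposes to compute the scalars by evaluating explicit highest weight vectors at $u_E+u_F$, but that is precisely the computational core of the paper's proof (the explicit formula for $\psi_{2i,2j}$ and the nonvanishing $\bar\psi_{2i,0}\ne 0$, proved by pairing with tensors $g_1\wedge\cdots\wedge g_{n-2i}\ot u_G$), it must be carried out uniformly for all $j\ge 2$ and all $n$ rather than as ``a finite number'' of checks, and it is not done. As written, the proposal establishes only the inclusion $\CC[\sigma(S_+)]_3\subseteq V_{3\omega_n}^\vee\op\bigoplus_{i>1}V_{\omega_n+\omega_{n-2i}}^\vee$, not the stated equality.
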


(Note the appearance of duals, since $S_+\subset \PP\Delta_+$ implies that
$\CC[\sigma (S_+)]$ is a quotient of ${\rm Sym}(\Delta_+^\vee)$.)
Comparing with the full decomposition of $S^3\Delta_+^\vee$ given by Theorem~\ref{cubic-dec}, we immedia\-te\-ly
deduce the decomposition of the space $I_3(\sigma (S_+))$ of cubic equations of
the secant variety:
\[
I_3(\sigma (S_+))=\bigoplus_{j\ge 6}a_{j-6}V_{\omega_n+\omega_{n-2j}}^\vee
\oplus
\bigoplus_{j\ge 4}a_{j-4}V_{\omega_{n-1}+\omega_{n-2j-1}}^\vee.
\]
(Note that the term $a_{j-6}$ appears because of the relation $a(x)-
\frac{x^2}{1-x}=x^6(a(x)+1)$.)  In particular, we deduce:

\begin{corollary}
The secant variety of the spinor variety of type $D_n$ has non trivial
cubic equations if and only if $n\ge 9$.
\end{corollary}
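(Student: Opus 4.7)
The plan is to read the statement off directly from the decomposition of $I_3(\sigma(S_+))$ displayed just above. A nonzero cubic equation exists if and only if at least one summand in
\[
\bigoplus_{j\ge 6}a_{j-6}V_{\omega_n+\omega_{n-2j}}^\vee
\oplus
\bigoplus_{j\ge 4}a_{j-4}V_{\omega_{n-1}+\omega_{n-2j-1}}^\vee
\]
is actually present, and a given summand is present precisely when the multiplicity $a_{j-6}$ (resp.\ $a_{j-4}$) is nonzero \emph{and} the subscript $n-2j$ (resp.\ $n-2j-1$) is non-negative, so that the proposed highest weight is dominant for $D_n$ (using the convention $\omega_0=0$ already employed in Proposition~\ref{ext-spin}).

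Next I would record that $a_0=1$, which is visible from $1/((1-x^2)(1-x^3))=1+x^2+x^3+\cdots$. Consequently the smallest admissible value of $j$ in the first sum is $j=6$, forcing $n\ge 12$, and the smallest in the second sum is $j=4$, forcing $n\ge 9$. Since the second sum contributes first, nontrivial cubic equations appear exactly when $n\ge 9$. At the boundary $n=9$, the $j=4$ term is $V_{\omega_8+\omega_0}^\vee=V_{\omega_8}^\vee=\Delta_-^\vee$, which reproduces the description of cubic equations in type $D_9$ announced in the introduction.

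For $n=7$ and $n=8$ both sums are empty, because every admissible $j$ produces a negative fundamental-weight subscript; and for $n\le 6$ the secant variety already fills $\PP\Delta_+$ and has no equations at all. No step really presents an obstacle: all the content lies in Theorem~\ref{cubic-dec} and Theorem~\ref{cubics}, and the corollary reduces to a bookkeeping check on when the decomposition first acquires a nonzero summand. The only point requiring a moment's care is the dominance condition in the boundary case $n=9$, where a fundamental weight index degenerates to $\omega_0$ and must be correctly interpreted as yielding the other half-spin representation rather than being discarded.
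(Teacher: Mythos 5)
Your proposal is correct and follows exactly the paper's (implicit) argument: the corollary is read off from the decomposition of $I_3(\sigma(S_+))$ obtained by comparing Theorem~\ref{cubic-dec} with Theorem~\ref{cubics}, with the first nonzero summand being the $j=4$ term $a_0V_{\omega_{n-1}+\omega_{n-9}}^\vee$, which is dominant precisely when $n\ge 9$ and degenerates to $V_{\omega_8}^\vee=\Delta_-^\vee$ at $n=9$. Your bookkeeping on multiplicities, dominance, and the boundary case matches the paper's deduction, so there is nothing to add.
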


In type $D_9$ we have
\[
S^3\Delta_+ = V_{3\omega_9}\oplus V_{\omega_9+\omega_5}
\oplus V_{\omega_9+\omega_3}\oplus V_{\omega_9+\omega_1}
\oplus V_{\omega_8},
\]
and $I_3(\sigma (S_+))=V_{\omega_8}^\vee.$

  We can compare Theorem~\ref{cubics}  with Theorem 3.11 in \cite{LW1}, according
to which the coordinate ring of the tangent variety $\tau(S_+)$ is given in
degree $d$ by the formula
\[
\CC[\tau (S_+)]_d=
\bigoplus_{2\sum a_p\le\sum pa_p\le d}
V_{(d-2\sum a_p)\omega_n+\sum a_p\omega_{n-2p}}^\vee,
\]
the sum being over $r$-tuples $a=(a_1,\ldots ,a_r)$ of non-negative integers,
with $r=\lfloor n/2\rfloor$ (and recall the convention that $\omega_0=0$). (Actually there is
a misprint in \cite{LW1}, where the condition stated on $a$ is not the condition
that follows from the proof.) For $d=3$ we deduce the following statement:

\begin{corollary}
The kernel of the restriction map $\CC[\sigma(S_+)]_3\ra \CC[\tau(S_+)]_3$ is
$\bigoplus_{p>3} V_{\omega_{n}+\omega_{n-2p}}^\vee.$
\end{corollary}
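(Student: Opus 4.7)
The plan is to compare the decomposition of $\CC[\sigma(S_+)]_3$ given by Theorem~\ref{cubics} with that of $\CC[\tau(S_+)]_3$ obtained by specializing the formula from \cite{LW1} quoted above at $d=3$. Since $\tau(S_+)\subset\sigma(S_+)$, restriction of functions yields a ${\rm Spin}(V)$-equivariant surjection $\CC[\sigma(S_+)]_3\to\CC[\tau(S_+)]_3$, whose kernel is a ${\rm Spin}(V)$-submodule. By Theorem~\ref{cubics} every irreducible component of $\CC[\sigma(S_+)]_3$ occurs with multiplicity one, so Schur's lemma reduces the problem to identifying which irreducibles appear in the target.

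First I would enumerate the tuples $a=(a_1,\ldots,a_r)$ of non-negative integers satisfying $2\sum a_p\le\sum pa_p\le 3$. The only admissible tuples are $a=0$ (contributing $V_{3\omega_n}^\vee$), the tuple with $a_2=1$ and all other entries zero (contributing $V_{\omega_n+\omega_{n-4}}^\vee$), and the tuple with $a_3=1$ and all other entries zero (contributing $V_{\omega_n+\omega_{n-6}}^\vee$). Any tuple with $a_1\ge 1$ fails the left-hand inequality, while combined tuples such as $a_1=a_2=1$ or $a_1=3$ also violate $2\sum a_p\le\sum pa_p$. Hence
\[
\CC[\tau(S_+)]_3=V_{3\omega_n}^\vee\oplus V_{\omega_n+\omega_{n-4}}^\vee\oplus V_{\omega_n+\omega_{n-6}}^\vee.
\]

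These three summands are precisely the term $V_{3\omega_n}^\vee$ and the terms $V_{\omega_n+\omega_{n-2i}}^\vee$ with $i=2,3$ appearing in Theorem~\ref{cubics}. By equivariance and multiplicity-freeness each of them maps isomorphically onto the corresponding summand of $\CC[\tau(S_+)]_3$, while every other irreducible summand of $\CC[\sigma(S_+)]_3$ must map to zero and hence lies in the kernel. The kernel is therefore the complementary sum
\[
\bigoplus_{i\ge 4}V_{\omega_n+\omega_{n-2i}}^\vee=\bigoplus_{p>3}V_{\omega_n+\omega_{n-2p}}^\vee,
\]
as claimed. No genuine obstacle arises: the argument is essentially bookkeeping, and the only step requiring care is a clean enumeration of the constraint set from \cite{LW1}, which is precisely why the author flagged a misprint in that reference just before stating the corollary.
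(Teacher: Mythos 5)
Your argument is correct and is essentially the paper's own (implicit) deduction: specialize the quoted Landsberg--Weyman formula for $\CC[\tau(S_+)]_d$ at $d=3$, enumerate the admissible tuples, and compare with Theorem~\ref{cubics} using equivariance and multiplicity-freeness. One cosmetic caveat: your blanket claim that any tuple with $a_1\ge 1$ violates the left-hand inequality is only true among tuples already satisfying $\sum p\,a_p\le 3$ (e.g.\ $a_1=a_5=1$ satisfies it but fails the right-hand bound), which does not affect your enumeration or conclusion.
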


\begin{proof}[Proof of the theorem.]
We adopt the following strategy. First, we identify a set of generators of
the vector spaces of equivariant maps $G-{\rm Hom}(S^3\Delta_+,V_{\omega_n
+\omega_{n-2i}})$ and $G-{\rm Hom}(S^3\Delta_+$, $V_{\omega_{n-1}
+\omega_{n-2i-1}})$, for  each $i$. Then we evaluate these generators
at a general point of the secant variety and prove that their image
has rank one and zero, respectively.

 {\it First step}. By Theorem \ref{cubic-dec}, we know
that the dimension of $G-{\rm Hom}(S^3\Delta_+,V_{\omega_n +\omega_{n-2i}})$
is equal to the number of pairs $(a,b)$ of integers such that $2a+3b=i$.
Rather than constructing an explicit basis, it will be much easier to
 construct
one of $G-{\rm Hom}(S^2\Delta_+\otimes\Delta_+,V_{\omega_n +\omega_{n-2i}})$.
By restriction to $S^3\Delta_+\subset S^2\Delta_+\otimes\Delta_+$, we
will deduce a set of generators of $G-{\rm Hom}(S^3\Delta_+,
V_{\omega_n +\omega_{n-2i}})$.
Recall the decomposition
\[
S^2\Delta_+ = \wedge^nV_+\oplus\bigoplus_{j>0}\wedge^{n-4j}V.
\]
By Proposition \ref{ext-spin}, there exists an equivariant map
from $\wedge^{n-4j}V\otimes\Delta_+$ to $V_{\omega_n +\omega_{n-2i}}$
if and only if $i\ge 2j$. Moreover this map is then unique (up to constant),
and can be described as the following composition:
\begin{gather*}
\phi_{i,j} : \quad
\wedge^{n-4j}V\otimes\Delta_+ \lra \;\wedge^{n-2i}V\otimes\wedge^{2i-4j}V
\otimes\Delta_+
  \lra  \wedge^{n-2i}V\otimes\Delta_+
 \lra   V_{\omega_n +\omega_{n-2i}}.
\end{gather*}
Here the f\/irst arrow is induced by the dual map to the exterior multiplication
of exterior forms; the second one by the action of the Clif\/ford algebra
(which is isomorphic to the exterior algebra) on the half-spin representations;
 the last one is the projection to the Cartan component.

The $\phi_{i,j}$, where $i\ge 2j$, form a basis of
$G-{\rm Hom}(S^2\Delta_+\otimes\Delta_+,V_{\omega_n +\omega_{n-2i}})$.

 {\it Second step}. Now we need to evaluate
$\phi_{i,j}$ on a generic point of the secant variety, which will of course
be our favourite point introduced at the very end of Section~\ref{section2.2}.

We will consider the case were $n=2m$ is even, the case $n$ odd being similar.
Then $u_E=e_1\cdots e_n=e_{\max}$ and
$u_F=1=e_\varnothing$. We f\/irst evaluate the image
$\psi_{2i,2j}$ of $e_{\max}e_\varnothing\otimes e_\varnothing$
inside  $\wedge^{n-2i}V\otimes\Delta_+$.
\end{proof}

\begin{lemma}
Up to a non zero constant,
\[
\psi_{2i,2j}=\sum_{\substack{\ell(I)=m-i-j\\ \ell(J)=2j}}
e_I\wedge f_I\wedge f_J\otimes e_J.
\]
\end{lemma}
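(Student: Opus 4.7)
The plan is to trace the image of $u_Eu_F\otimes u_F$ through the first two arrows of $\phi_{i,j}$. Since $\phi_{i,j}$ is defined on $\wedge^{n-4j}V\otimes\Delta_+$, I first project $u_E\cdot u_F$ onto the $\wedge^{n-4j}V$-summand of $S^2\Delta_+$ via the bilinear form $\beta_{n-4j}$. The earlier proposition (applied with $2k=n-4j$, so $k=m-2j$) gives, up to a non-zero constant,
\[
\beta_{n-4j}(u_E,u_F)=\sum_{\ell(K)=m-2j}e_K\wedge f_K.
\]

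Next, I apply the comultiplication $\wedge^{n-4j}V\to\wedge^{n-2i}V\otimes\wedge^{2i-4j}V$, expanding each $e_K\wedge f_K$ as a signed sum over splittings encoded by subsets $A_\beta,B_\beta\subseteq K$ selecting which $e_k$'s and $f_k$'s land in the $\beta$-piece. I then evaluate the Clifford action of $\beta$ on $u_F=1\in\Delta_+=\wedge^{\mathrm{even}}E$. Using the isomorphism $\theta:Cl(V)\simeq\wedge V$ together with the identity $\theta^{-1}(e_k\wedge f_k)=e_kf_k-1$ and the annihilation $f_k\cdot 1=0$, a direct analysis shows that $\beta\cdot 1\neq 0$ precisely when $B_\beta\subseteq A_\beta$. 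Writing $C=B_\beta$ and $J=A_\beta\setminus C$, the action yields $\beta\cdot 1=\pm(-1)^{|C|}e_J$, lying in $\wedge^{|J|}E\subset\Delta_+$.

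Correspondingly, the remaining factors of $e_K\wedge f_K$ rearrange to $\alpha=\pm e_I\wedge f_I\wedge f_J$ with $I=K\setminus(J\sqcup C)$. Restricting to the $\wedge^{2j}E$-component of $\Delta_+$ (i.e.\ setting $|J|=2j$), together with $|K|=m-2j$ and $|J|+2|C|=2i-4j$, forces $|C|=i-3j$ and $|I|=m-i-j$. For each fixed disjoint pair $(I,J)$ of these sizes, the admissible $C$'s are the subsets of $(I\cup J)^c$ of size $i-3j$, a set of cardinality $\binom{m+i-j}{i-3j}$.

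The hardest part will be the sign bookkeeping. Each $(I,J,C)$-summand accumulates signs from (i)~the shuffle in the comultiplication of $e_K\wedge f_K$, (ii)~the factor $(-1)^{|C|}$ from the identity $(e_kf_k-1)\cdot 1=-1$, and (iii)~the rearrangement of $\alpha$ into the canonical form $e_I\wedge f_I\wedge f_J$. With a fixed ordering convention for $e_K\wedge f_K$, one checks that these three contributions combine to a uniform sign as $C$ varies with $I,J$ held fixed, so the total coefficient of $e_I\wedge f_I\wedge f_J\otimes e_J$ is a non-zero multiple of $\binom{m+i-j}{i-3j}$. This establishes the claimed formula up to a non-zero overall constant.
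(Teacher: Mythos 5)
The paper states this lemma without proof, so there is no argument to compare with; the route you take --- project $u_Eu_F$ onto $\wedge^{n-4j}V$ via $\beta_{n-4j}$, unshuffle into $\wedge^{n-2i}V\otimes\wedge^{2i-4j}V$, and let the second factor act on $1\in\Delta_+$ through $\theta^{-1}$, with survival criterion $B\subseteq A$ and value $\pm(-1)^{|C|}e_J$ --- is certainly the intended computation, and that mechanical part of your proposal is correct. The genuine gap is the sentence ``restricting to the $\wedge^{2j}E$-component of $\Delta_+$ (i.e.\ setting $|J|=2j$)''. Nothing forces $\ell(J)=2j$: your own constraints are $|J|+2|C|=2i-4j$ and $|I|+|J|+|C|=|K|=m-2j$, which leave $|C|$ free in the range $0\le |C|\le i-2j$, so the image contains terms with $\ell(J)=2j'$ and $\ell(I)=m-i-j'$ for every admissible $j'$, and you give no reason why the components with $j'\ne j$ should vanish; in fact they do not. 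Two concrete symptoms: for $i=2j>0$ the middle factor is $\wedge^0V$, so the image is just $\beta_{n-2i}(u_E,u_F)\otimes 1$, i.e.\ a multiple of the tensor with $\ell(J)=0$, not the claimed one with $\ell(J)=2j>0$ (and your recipe then yields the empty count $|C|=i-3j<0$); and for $n=4$, $(i,j)=(1,0)$, a direct expansion gives, up to normalization, $3\sum_{\ell(I)=1}e_I\wedge f_I\otimes 1+\sum_{\ell(J)=2}f_J\otimes e_J$, so the discarded component survives with a nonzero coefficient and cannot be cancelled.

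What your computation actually establishes --- modulo a second point you only assert, namely that the accumulated sign is uniform not just over $C$ for fixed $(I,J)$ but also across the different pairs $(I,J)$ --- is that the image of $e_{\max}e_\varnothing\otimes e_\varnothing$ in $\wedge^{n-2i}V\otimes\Delta_+$ is a linear combination $\sum_{j'}\lambda_{j'}\psi_{2i,2j'}$ of tensors of the displayed shape, of which the $\ell(J)=2j$ piece is only one summand. That amended statement is the one that can be proved, and it is what the paper's subsequent argument really uses: by Lemma~\ref{relations} all the projections $\bar\psi_{2i,2j'}$ to $V_{\omega_n+\omega_{n-2i}}$ are mutually proportional, so the extra components do not enlarge the span of the evaluations (one must then recheck the nonvanishing statement, which is easiest for the largest admissible $j$, where the combination has at most two terms). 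As written, however, your proposal does not prove the equality asserted in the lemma; the restriction to $|J|=2j$ is precisely the step that would need justification, and it is false as an identity of tensors.
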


What remains to do is to evaluate the dimension of the subspace of
$V_{\omega_n +\omega_{n-2i}}$ spanned by the images
$\bar\psi_{2i,2j}$ of the $\psi_{2i,2j}$, $i\ge 2j$. It turns out that this
dimension is equal to one, because of the following dependence relations.

\begin{lemma}\label{relations}
 For $j\ge 1$ and $i+j\le m$, we have
\[
(-1)^{m-i-j}(2j-1)\bar\psi_{2i,2j}+(m+i-j+1)\bar\psi_{2i,2j-2}=0.
\]
\end{lemma}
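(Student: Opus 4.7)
The plan is to exhibit the combination $(-1)^{m-i-j}(2j-1)\psi_{2i,2j}+(m+i-j+1)\psi_{2i,2j-2}$ as an element of the kernel of the Cartan projection $\pi\colon \wedge^{n-2i}V\otimes\Delta_+ \twoheadrightarrow V_{\omega_n+\omega_{n-2i}}$. By Proposition~\ref{ext-spin}, $\ker\pi$ is the sum of the non-Cartan components of $\wedge^{n-2i}V\otimes\Delta_+$, and as a $\mathrm{Spin}$-module it is generated by the images of two natural equivariant maps: the Clifford raising map
\[
R : \wedge^{n-2i-1}V\otimes\Delta_-\to\wedge^{n-2i}V\otimes\Delta_+,\qquad R(\omega\otimes u):=\sum_{k=1}^n\bigl((e_k\wedge\omega)\otimes(f_k\cdot u)+(f_k\wedge\omega)\otimes(e_k\cdot u)\bigr),
\]
obtained by dualizing the Clifford action $V\otimes\Delta_+\to\Delta_-$ via $q$, and the ``trace'' map $T(\chi):=(\sum_k e_k\wedge f_k)\wedge\chi$ on $\wedge^{n-2i-2}V\otimes\Delta_+$.

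The natural preimage under $R$ to try is
\[
\tilde\psi := \sum_{\substack{\ell(I)=m-i-j\\ \ell(J)=2j-1\\ I\cap J=\varnothing}} e_I\wedge f_I\wedge f_J\otimes e_J \ \in\ \wedge^{n-2i-1}V\otimes\Delta_-,
\]
which interpolates in the obvious way between $\psi_{2i,2j}$ and $\psi_{2i,2j-2}$. Splitting $R(\tilde\psi)$ according to whether $k\in J$ or $k\notin I\cup J$ (these are the only cases giving nonzero summands), the Clifford identities $f_k\cdot e_J=2(-1)^{s_k-1}e_{J\setminus k}$ and $e_k\cdot e_J=(-1)^{r_k}e_{J\cup k}$ together with the rearrangement $e_k\wedge e_I\wedge f_I\wedge f_J=(-1)^{|I|+s_k-1}e_{I\cup k}\wedge f_{I\cup k}\wedge f_{J\setminus k}$ show that the first type of term produces a basis element of $\psi_{2i,2j-2}$ weighted by the uniform sign $(-1)^{|I|}=(-1)^{m-i-j}$ and a factor $2$ from the Clifford convention, while the second type produces a basis element of $\psi_{2i,2j}$ with signs cancelling ($(-1)^{2|I|}=1$). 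Counting preimages of each output basis term yields respective multiplicities $|I'|=m-i-j+1$ and $|J'|=2j$.

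Applying $\pi$ to $R(\tilde\psi)$ then gives a first linear relation between $\bar\psi_{2i,2j-2}$ and $\bar\psi_{2i,2j}$ whose coefficients carry the expected sign $(-1)^{m-i-j}$ but give the naïve magnitudes $2j$ and $2(m-i-j+1)$. To sharpen these to the lemma's $(2j-1)$ and $(m+i-j+1)$, one combines the $R$-relation with the parallel relations produced by applying $T$ to carefully chosen lifts in $\wedge^{n-2i-2}V\otimes\Delta_+$ (namely, the analogues $\psi_{2i+2,2j-2}$ and $\psi_{2i+2,2j}$): each $T$-relation lies in $\ker\pi$, and a linear elimination collapses the two-parameter family of naïve relations to the single ``irreducible'' relation of the lemma.

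The principal obstacle is precisely this final combinatorial reconciliation: tracking the signs carefully (they combine cleanly into the $(-1)^{m-i-j}$ factor) is routine, but identifying the exact $T$-contribution needed to reduce $(2j, 2(m-i-j+1))$ to $(2j-1, m+i-j+1)$ requires either an inductive argument anchored at the base case $j=1$ (where the two normalizations coincide up to a global factor of $2$) or a direct pairing with a weight-$(-\omega_n)$ covector in $V_{\omega_n+\omega_{n-2i}}^\vee$.
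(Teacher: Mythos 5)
Your first half is exactly the paper's argument: your map $R$ is the paper's $\kappa_i$ (the transpose of the Clifford multiplication $V\otimes\Delta_+\to\Delta_-$ followed by wedging), your $\tilde\psi$ is the paper's $\psi_{2i+1,2j-1}$, and the key point --- that the image of this map has zero projection to the Cartan component because $\wedge^{n-2i-1}V\otimes\Delta_-$ contains no copy of $V_{\omega_n+\omega_{n-2i}}$, by Proposition~\ref{ext-spin} --- is precisely the paper's observation. The paper stops there: it evaluates $\kappa_i(\psi_{2i+1,2j-1})$ once, reads off the two coefficients, and concludes. Concerning the constants: your bookkeeping counts preimages of each output basis vector (giving $2(m-i-j+1)$ and $2j$), while the coefficients recorded in the statement, $2j-1$ and $m+i-j+1$, are the numbers of admissible $k$ for a fixed input pair $(I,J)$ (namely $k\in J$, resp.\ $k\notin I\cup J$). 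Whichever normalization one adopts, both coefficients are manifestly nonzero, and that is the only feature of the lemma used in the proof of Theorem~\ref{cubics} (it serves solely to show that the $\bar\psi_{2i,2j}$, $j\ge 0$, span at most a line). So the correct move was to stop after your direct computation, not to try to force the displayed constants.

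The second half of your proposal --- ``sharpening'' the relation by eliminating against relations produced by $T(\chi)=\bigl(\sum_k e_k\wedge f_k\bigr)\wedge\chi$ --- is a genuine gap, and moreover cannot be repaired. The element $\sum_k e_k\wedge f_k$ is not $\mathrm{Spin}(V)$-invariant: the invariant bilinear form lives in $S^2V$, whereas $\wedge^2V\simeq\mathfrak{so}(V)$ is the adjoint representation and has no invariant vector. Hence $T$ is not equivariant and there is no reason for its image to lie in $\ker\pi$; in fact it does not, since a short computation (with uniform sign $(-1)^{|I|}$) gives $T(\psi_{2i+2,2j-2})=(-1)^{m-i-j}(m-i-j+1)\,\psi_{2i,2j-2}$, so if ``each $T$-relation lies in $\ker\pi$'' were true one would get $\bar\psi_{2i,2j-2}=0$ for all $j$, contradicting the subsequent lemma of the paper asserting $\bar\psi_{2i,0}\ne 0$, on which the whole evaluation argument depends. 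The same objection applies structurally to any elimination scheme: a second relation between $\bar\psi_{2i,2j}$ and $\bar\psi_{2i,2j-2}$ independent of the one you already have would force both to vanish, which is impossible. So no auxiliary map can convert your coefficients into the displayed ones; the single $\kappa_i$-computation is the whole proof, and it already yields everything the paper needs.
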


\begin{proof}
First observe that the natural maps $V\otimes \Delta_\pm\ra \Delta_\mp$
 induce, by transposition, maps  $\Delta_\pm\ra\Delta_\mp\otimes V^\vee
 \simeq \Delta_\mp\otimes V$.
We can thus def\/ine an equivariant morphism
\[
\kappa_i :
\wedge^{n-2i-1}V\otimes\Delta_-\lra \wedge^{n-2i-1}V\otimes V\otimes\Delta_+
\lra \wedge^{n-2i}V\otimes\Delta_+.
\]
Observe that the image of this map is contained in (and actually coincides
with, but we will not need that) the kernel of the projection to the Cartan
component $V_{\omega_n +\omega_{n-2i}}$. Indeed, it follows from Proposition
 \ref{ext-spin} that the tensor product $\wedge^{n-2i-1}V\otimes\Delta_-$
does not contain any copy of $V_{\omega_n +\omega_{n-2i}}$.

Now consider $\psi_{2i+1,2j-1}\in \wedge^{n-2i-1}V\otimes\Delta_-$. Its image
by $\kappa_i$ is
\begin{gather*}
\kappa_i(\psi_{2i+1,2j-1}) = \sum_{k=1}^n\Bigl(
\sum_{\substack{\ell(I)=m-i-j\\ \ell(J)=2j-1}}
e_k\wedge e_I\wedge f_I\wedge f_J\otimes f_k.e_J
+f_k\wedge e_I\wedge f_I\wedge f_J\otimes e_k\wedge e_J\Bigr)\\
 \hspace{24mm}= \sum_{k=1}^n\Bigl(
\sum_{k\in J}(-1)^{\ell(I)}
e_{I\cup k}\wedge f_{I\cup k}\wedge f_{J\setminus k}\otimes e_{J\setminus k}+
\sum_{k\notin I\cup J}
e_I\wedge f_I\wedge f_{J\cup k}\otimes  e_{J\cup k}\Bigr) \\
 \hspace{24mm}= (-1)^{m-i-j}(2j-1)\psi_{2i,2j-2}+(m+i-j+1)\psi_{2i,2j}.
\end{gather*}
As we have seen the projection of $\kappa_i(\psi_{2i+1,2j-1})$
in $V_{\omega_n +\omega_{n-2i}}$ must be zero, and this implies the claim.
\end{proof}

The coef\/f\/icients in the previous dependence relations are always non zero, so
the dimension of the span of the $\bar\psi_{2i,2j}$ is at most one. What
remains to check is that it is non zero, which follows from the next lemma.

\begin{lemma}
$\bar\psi_{2i,0}\ne 0$.
\end{lemma}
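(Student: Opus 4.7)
My plan is to exploit the invariance of $\psi_{2i,0}$ under the Levi subalgebra $\mathfrak{gl}_n \subset \mathfrak{so}_{2n}$ stabilizing the polarization $V = E \oplus F$. Up to a nonzero scalar, $\psi_{2i,0} = \omega^{m-i} \otimes 1$, where $\omega = \sum_{k=1}^n e_k \wedge f_k \in \wedge^2 V$ spans the one-dimensional space $(\wedge^2 V)^{\mathfrak{gl}_n}$ and $1 \in \Delta_+$ spans $(\Delta_+)^{\mathfrak{gl}_n}$, so $\psi_{2i,0}$ is a manifestly nonzero $\mathfrak{gl}_n$-invariant of $\wedge^{n-2i} V \otimes \Delta_+$ of weight zero. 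The lemma is therefore equivalent to saying that the $\mathfrak{so}_{2n}$-equivariant projection to the Cartan component $V_{\omega_n + \omega_{n-2i}}$ does not annihilate $\psi_{2i,0}$.

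I would carry this out via a dimension count of $\mathfrak{gl}_n$-invariants. Decomposing $\wedge^{n-2i} V = \bigoplus_{a+b=n-2i} \wedge^a E \otimes \wedge^b F$ and $\Delta_+ = \bigoplus_{k \text{ even}} \wedge^k E$, and noting that $\mathfrak{gl}_n$-invariants appear in $\wedge^a E \otimes \wedge^b F \otimes \wedge^k E$ precisely when $b = a+k$ (with multiplicity one, from the pairing of $\wedge^b F \cong (\wedge^b E)^\vee$ with $\wedge^{a+k} E \subset \wedge^a E \otimes \wedge^k E$), one obtains $\dim (\wedge^{n-2i} V \otimes \Delta_+)^{\mathfrak{gl}_n} = m-i+1$, with basis precisely $\{\psi_{2i,2j}\}_{j=0}^{m-i}$. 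On the $\mathfrak{so}_{2n}$ side, Proposition~\ref{ext-spin} contains exactly $m-i+1$ summands of type $V_{\omega_n + \omega_{n-2k}}$ (for $k = i, \ldots, m$), plus further summands of type $V_{\omega_{n-1} + \omega_{n-2i-2j-1}}$. A branching-rule computation, realizing $V_{\omega_n+\omega_\ell}$ as the Cartan piece of $\wedge^\ell V \otimes \Delta_+$ and isolating the invariant coming from $\wedge^{\ell/2} E \otimes \wedge^{\ell/2} F \otimes 1$, shows that each $V_{\omega_n + \omega_{n-2k}}$ contributes a one-dimensional $\mathfrak{gl}_n$-invariant line while the $V_{\omega_{n-1} + \omega_\bullet}$ summands contribute none. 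Dimensions then match, so the isotypic projection on invariants is an isomorphism, and in particular its restriction to $V_{\omega_n + \omega_{n-2i}}^{\mathfrak{gl}_n}$ cannot vanish on the full basis $\{\psi_{2i,2j}\}$. Consequently some $\bar\psi_{2i,2j}$ is nonzero, and Lemma~\ref{relations} then forces $\bar\psi_{2i,0} \ne 0$ via the proportionality of all the $\bar\psi_{2i,2j}$.

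The main obstacle is verifying the branching claim that exactly the $V_{\omega_n+\omega_\bullet}$ summands carry $\mathfrak{gl}_n$-invariants. If this turns out to be inconvenient, an equivalent and more hands-on route is to apply a suitable product of Chevalley lowering operators to the highest weight vector $v^+ = (e_1\wedge\cdots\wedge e_{n-2i}) \otimes e_{\max}$ of $V_{\omega_n+\omega_{n-2i}}$ to produce a weight-zero vector, then directly compute the natural $\mathfrak{so}_{2n}$-invariant pairing with $\psi_{2i,0}$ using the forms induced by $q$ on $\wedge^{n-2i} V$ and by $\beta_0$ on $\Delta_+$; the resulting combinatorial factor can be shown to be nonzero by a direct count. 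The case $n$ odd is handled identically, with $F$ replaced by $F'$ and $1 \in \Delta_+$ by $e_n \in \Delta_+$.
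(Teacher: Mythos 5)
Your overall strategy (count Levi covariants, show the Cartan summand carries one, then use Lemma~\ref{relations}) could be made to work, but as written it has a genuine gap at its crucial step, and a misstatement at the start. First the misstatement: $1\in\Delta_+$ is not a $\mathfrak{gl}_n$-invariant and $\psi_{2i,0}$ does not have weight zero; the Levi acts on $1$ through the character $(\det E)^{-1/2}$, so $\psi_{2i,2j}$ has weight $-\omega_n$ and what you are counting are $(\det E)^{-1/2}$-covariants (equivalently $\mathfrak{sl}_n$-invariants of that central character). Your block count survives this correction: the covariant space of $\wedge^{n-2i}V\otimes\Delta_+$ has dimension $m-i+1$ with basis the $\psi_{2i,2j}$, and there are $m-i+1$ summands $V_{\omega_n+\omega_{n-2k}}$ in Proposition~\ref{ext-spin}. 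The real problem is the branching claim that each $V_{\omega_n+\omega_{n-2k}}$ carries such a covariant line while the $V_{\omega_{n-1}+\omega_{\bullet}}$ summands carry none. Your justification -- realize $V_{\omega_n+\omega_\ell}$ as the Cartan piece of $\wedge^\ell V\otimes\Delta_+$ and ``isolate the invariant coming from $\wedge^{\ell/2}E\otimes\wedge^{\ell/2}F\otimes 1$'' -- is circular: that covariant is exactly $\psi_{2k,0}$ (for $\ell=n-2k$), and asserting that it contributes to the Cartan component is precisely the statement $\bar\psi_{2k,0}\ne 0$ being proved. Counting only inside the modules $\wedge^{\mathrm{even}}V\otimes\Delta_+$ cannot break the tie: those counts only give relations such as (mult in $V_{\omega_n+\omega_{2t}}$) $+$ (mult in $V_{\omega_{n-1}+\omega_{2t-1}}$) $=1$, which do not locate the covariant. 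The weight error also infects your fallback: to pair against $\psi_{2i,0}$ you need a vector of weight $+\omega_n$ in $V_{\omega_n+\omega_{n-2i}}$, not a weight-zero vector, and the decisive nonvanishing of that pairing is exactly what your sketch leaves at the level of ``can be shown by a direct count''; carried out, it is the paper's proof, which pairs $\psi_{2i,0}$ with extremal vectors $g_1\wedge\cdots\wedge g_{n-2i}\otimes u_G$, $G$ isotropic, and sees the nonvanishing immediately.

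If you want to keep the representation-theoretic route, the branching claim can be established non-circularly by enlarging the count: the same block computation gives $t+1$ covariants in $\Delta_-\otimes\wedge^{2t+1}V$, whose decomposition (Proposition~\ref{ext-spin} with $\omega_n$ and $\omega_{n-1}$ exchanged) consists of the $V_{\omega_{n-1}+\omega_{2s+1}}$ and the $V_{\omega_n+\omega_{2s}}$ with $s\le t$; subtracting the identity coming from $\Delta_+\otimes\wedge^{2t}V$ shows each $V_{\omega_{n-1}+\omega_{2t+1}}$ has no covariant, and then induction on $t$ gives that each $V_{\omega_n+\omega_{2t}}$ has exactly one. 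With that input your argument closes as you say: the projection of the covariant space onto the covariant line of the Cartan summand is surjective, so some $\bar\psi_{2i,2j}\ne 0$, and Lemma~\ref{relations} forces $\bar\psi_{2i,0}\ne 0$. Note, though, that this is considerably heavier than the paper's one-line pairing argument.
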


\begin{proof} Recall that $\psi_{2i,0}=\sum_{\ell(I)=m-i}
e_I\wedge f_I\otimes 1\in\wedge^{n-2i}V\otimes \Delta_+$.
We need to prove that this tensor has a non zero Cartan component.
To detect the component on $V_{\omega_n+\omega_{n-2i}}$, we just need
to pair $\psi_{2i,0}$ with a highest weight vector in this representation,
that is, a tensor of the form $g_1\wedge\cdots\wedge g_{n-2i}\otimes u_G$,
where $G$ is a maximal isotropic space in the same family as $E$, and
the vectors $g_1,\ldots, g_{n-2i}$ belong to $G$. Generically, the pairing
of such a tensor with $\psi_{2i,0}$ is clearly non zero: this is obvious
for the pairing of $1$ with $u_G$ in $\Delta_+$, which is the top-degree
component of $u_G$; and also for the pairing with
$g_1\wedge\cdots\wedge g_{n-2i}$, since such tensors generate the full
 $\wedge^{n-2i}V$. \end{proof}

Now we consider cubics of type $V_{\omega_{n-1} +\omega_{n-2i-1}}$.
As before we f\/irst identify a basis
of $G-{\rm Hom}(S^2\Delta_+\otimes\Delta_+,V_{\omega_{n-1} +\omega_{n-2i-1}})$,
given by the composition $\phi_{i,j}^-$ of the following natural maps
\begin{gather*}
\phi_{i,j}^- : \quad
\wedge^{n-4j}V\otimes\Delta_+ \lra \wedge^{n-2i-1}V\otimes\wedge^{2i-4j+1}V
\otimes\Delta_+  \\
\phantom{\phi_{i,j}^- : \quad
\wedge^{n-4j}V\otimes\Delta_+}{}
 \lra \wedge^{n-2i-1}V\otimes\Delta_-
   \lra V_{\omega_{n-1} +\omega_{n-2i-1}}.
\end{gather*}
Again for $n=2m$ even, we evaluate $\phi_{i,j}^-$ at the same point as before,
and we get, up to a non zero constant, the tensor $\bar\psi_{2i+1,2j+1}$.
As before, when $j$ varies, the $\bar\psi_{2i+1,2j+1}$ are modif\/ied only by a non
zero constant. And this implies that they are all equal to zero, because

\begin{lemma}
$\bar\psi_{2i+1,1}=0$.
\end{lemma}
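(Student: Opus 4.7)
Plan: The argument will parallel the proof of Lemma~\ref{relations}, exchanging the roles of $\Delta_+$ and $\Delta_-$. The key is to exhibit an equivariant operator whose image automatically avoids the Cartan component $V_{\omega_{n-1}+\omega_{n-2i-1}}$ and in whose image $\psi_{2i+1,1}$ visibly lies.

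First I would unwind the definition of $\phi^-_{i,0}$ applied to the chosen point $u_E\cdot u_F\ot u_F$ to write $\psi_{2i+1,1}$ explicitly. Treating the case $n=2m$ even, the projection of $u_E\cdot u_F$ onto the component $\wedge^nV_+\subset S^2\Delta_+$ equals, up to a nonzero scalar, $\sum_{\ell(K)=m}e_K\we f_K$; applying the coproduct $\wedge^nV\ra\wedge^{n-2i-1}V\ot\wedge^{2i+1}V$ and then the Clifford action of $\wedge^{2i+1}V$ on $1\in\Delta_+$ yields, up to a nonzero constant,
\[
\psi_{2i+1,1}=\sum_{\substack{\ell(I)=m-i-1\\ k\notin I}} e_I\we f_I\we f_k\ot e_k\ \in\ \wedge^{n-2i-1}V\ot\Delta_-.
\]

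Next I would introduce the analogue of $\kappa_i$ running in the opposite direction. Transposing the Clifford action $V\ot\Delta_-\ra\Delta_+$ and identifying $V^\vee\simeq V$ via the quadratic form gives a map $\Delta_+\ra V\ot\Delta_-$ which sends $1\mapsto\sum_k f_k\ot e_k$. Composed with exterior multiplication, this yields an equivariant morphism
\[
\kappa'_i:\ \wedge^{n-2i-2}V\ot\Delta_+\ \lra\ \wedge^{n-2i-2}V\ot V\ot\Delta_-\ \lra\ \wedge^{n-2i-1}V\ot\Delta_-.
\]
Proposition~\ref{ext-spin} applied to the source shows that every irreducible summand has highest weight of the form $\omega_n+\omega_{n-2i-2-2j}$ or $\omega_{n-1}+\omega_{n-2i-3-2j}$ with $j\ge 0$, and none of these equals $\omega_{n-1}+\omega_{n-2i-1}$. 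Hence the image of $\kappa'_i$ lies entirely in the kernel of the Cartan projection onto $V_{\omega_{n-1}+\omega_{n-2i-1}}$.

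Finally I would compute $\kappa'_i$ on the element $\psi_{2i+2,0}=\sum_{\ell(I)=m-i-1}e_I\we f_I\ot 1$, the element playing the role analogous to that of $\psi_{2i+1,2j-1}$ in the proof of Lemma~\ref{relations}. Using the formula above for the transposed Clifford action, a direct calculation yields $\kappa'_i(\psi_{2i+2,0})=\psi_{2i+1,1}$ up to a nonzero scalar (the potentially problematic terms with $k\in I$ drop out automatically because $f_k\we f_I=0$ in that case). Combined with the vanishing statement from the previous step, this forces $\bar\psi_{2i+1,1}=0$. The main obstacle I anticipate is the careful bookkeeping of signs and of the factors of $2$ introduced by the Clifford relations $vw+wv=2q(v,w)1$; a secondary routine check is that the parallel argument runs through for $n$ odd with the obvious modification of the basepoint from $u_F$ to $u_{F'}$.
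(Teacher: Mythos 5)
Your proof is correct and is essentially the paper's own argument made explicit: the paper likewise proves the vanishing by exhibiting $\psi_{2i+1,1}$ as the image, under a $\kappa$-type equivariant map built from the transposed Clifford action as in Lemma~\ref{relations}, of an element of a module containing no copy of $V_{\omega_{n-1}+\omega_{n-2i-1}}$, so its Cartan projection must be zero. Your explicit map $\kappa'_i$ applied to $\psi_{2i+2,0}$ is just the spelled-out (and index-corrected) form of the paper's terse reference to ``the very same computation'', so there is nothing further to add.
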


\begin{proof}
Indeed, the very same computation as that of Lemma~\ref{relations} shows that
$\psi_{2i+1,1}$ is a non zero multiple of $\kappa_i(\psi_{2i,0})$, and therefore
its projection to $V_{\omega_{n-1} +\omega_{n-2i-1}}$ has to vanish. \end{proof}

\section{Decomposition formulas: quartics}\label{section5}

\subsection{More formulas for tensor products}
First we shall need decomposition formulas for tensor products of some
Cartan powers of spin representations. We use the notation $\theta_i=
\epsilon_1+\cdots +\epsilon_i$; this is a fundamental weight $\omega_i$
when $i\le n-2$, but $\theta_{n-1}=\omega_{n-1}+\omega_n$ and $\theta_n=
2\omega_n$. We have
\begin{alignat*}{3}
& V_{3\omega_n}\otimes\Delta_+ =
\bigoplus_{j\ge 0}V_{\theta_n+\theta_{n-2j}}, \qquad &&
V_{3\omega_n}\otimes\Delta_- =
\bigoplus_{j\ge 0}V_{\theta_n+\theta_{n-2j-1}}, & \\
& V_{2\omega_n}\otimes V_{2\omega_{n-1}}= \bigoplus_{j,k\; {\rm odd}}
V_{\theta_{n-j}+\theta_{n-k}},\qquad &&
V_{2\omega_n}\otimes V_{\omega_{n-i}}= \bigoplus_{\substack{j+k\ge i
\\j+k-i\,{\rm even}}}V_{\theta_{n-j}+\theta_{n-k}},&
\end{alignat*}
Observe that, as follows from Weyl's dimension formula, for $p\ge q\ge 1$,
\begin{gather*}
\dim V_{\theta_n+\theta_{n-p}} = (p+1)^2
\frac{(2n)!(2n+1)!}{(n-p)!(n+p+2)!n!(n+1)!}, \\
\dim V_{\theta_{n-p}+\theta_{n-q}} = (p-q+1)(p+q+1)
\frac{(2n)!(2n+2)!}{(n-p)!(n+p+2)!(n-q+1)!(n+q+1)!}.
\end{gather*}

We will also need formulas for the tensor products of fundamental non spin
representations.
These are just wedge products of the vector representations, so one can
take their tensor products as $\fsl_{2n}$-modules, and then restrict to
$\fso_{2n}$-modules using the Littlewood restriction rules and their
generalization by King and Howe--Tan--Willenbring (see~\cite{htw}).
The result is the
following decomposition formula, for $p\ge q$:
\[
V_{\omega_{n-p}}\otimes V_{\omega_{n-q}}= \bigoplus_{
\substack{
p-q\le r-s\le p+q\le r+s \\ p+q-r-s\; {\rm even}}}
V_{\theta_{n-r}+\theta_{n-s}}
  \oplus 2\bigoplus_{\substack{p+q\le r+s\\ p+q-r-s\; {\rm even}}
}V_{\theta_{n-r}+\theta_{n-s}}.
\]

For future use we will need to understand in some detail the spin-equivariant
maps $V_{\omega_p}\otimes V_{\omega_q}\rightarrow
V_{\theta_r+\theta_s}$. Observe that we can def\/ine two basic maps
\begin{gather*}
 \alpha_{p,q}^{p-1,q+1}: \
 \Lambda^pV\otimes \Lambda^qV
 \rightarrow\Lambda^{p-1}V\otimes V\otimes \Lambda^qV
 \rightarrow\Lambda^{p-1}V\otimes \Lambda^{q+1}V,
\\
 \alpha_{p,q}^{p-1,q-1}: \
 \Lambda^pV\otimes \Lambda^qV
 \rightarrow \Lambda^{p-1}V\otimes V\otimes \Lambda^qV
 \rightarrow \Lambda^{p-1}V\otimes \Lambda^{q-1}V.
\end{gather*}

In the def\/inition of  $\alpha_{p,q}^{p-1,q+1}$ we used the natural map
$V\otimes \Lambda^qV\rightarrow \Lambda^{q+1}V$ def\/ined by the wedge product,
while in the def\/inition of  $\alpha_{p,q}^{p-1,q-1}$ we used the map
$V\otimes \Lambda^qV\rightarrow \Lambda^{q-1}V$ induced by the contraction
by the quadratic form on $V$.

A straightforward computation shows that
\[
\alpha_{p-1,q+1}^{p-2,q}\circ\alpha_{p,q}^{p-1,q+1}=
\alpha_{p-1,q-1}^{p-2,q}\circ\alpha_{p,q}^{p-1,q-1}.
\]
By such compositions, we can therefore def\/ine unambiguously, maps
\[
\alpha_{p,q}^{r,s}: \ \Lambda^pV\otimes \Lambda^qV \rightarrow
 \Lambda^rV\otimes \Lambda^sV
\]
for $p+q-r-s$ even and $|q-s|\le p-r$. Note that the latter condition is
equivalent to $p-q\ge r-s$ and $p+q\ge r+s$.
\[
\xymatrix{
 &  &  & & & &\hspace*{-6mm}\Lambda^{p-k}V\otimes \Lambda^{q+k}V\\
 &  & & &\ar[ur]\ar[dr] &&  \\
 & & & \ar[ur]\ar[dr] & &  &\\
 & &  \ar[ur]\ar[dr] & & \ar[ur]\ar[dr]& & \\
\Lambda^pV\otimes \Lambda^qV& \ar[ur]\ar[dr] &  & \ar[ur]\ar[dr] & && \\
 &  &  \ar[ur]\ar[dr] & & \ar[ur]\ar[dr]& & \\
 & & & \ar[ur]\ar[dr] & & & \\  & & & &\ar[ur]\ar[dr] & & \\
 &  &  & & & &\hspace*{-8mm}\Lambda^{p-k}V\otimes \Lambda^{q-k}V \\
}
\]

If we compose such a map $\alpha_{p,q}^{r,s}$ with the
projection to the Cartan component,
\[
\Lambda^pV\otimes \Lambda^qV \rightarrow\Lambda^rV\otimes \Lambda^sV
\rightarrow V_{\theta_r+\theta_s},
\]
we claim that the resulting map $\beta_{p,q}^{r,s}$ is non zero.
This is also true
if $s\ge n$, which can occur if $p+q\ge n$; in this case, since
$\Lambda^sV\simeq \Lambda^{2n-s}V$, the resulting map $\beta_{p,q}^{r,s}$
maps $\Lambda^pV\otimes \Lambda^qV$ to $V_{\theta_r+\theta_{2n-s}}$. We
claim that $\beta_{p,q}^{r,s}$ and $\beta_{p,q}^{r,2n-s}$ are independent,
and that this is the explanation for the multiplicities that are equal
to two in the decomposition formula above.

\subsection{Branching}

Second, we will use restriction formulas for representations
of type $V_{\theta_i+\theta_j}$, f\/irst from $D_{n+1}$ to~$B_n$,
then to $D_n$ (see again \cite{htw}).
We identify the weight $\theta_i+\theta_j$, for
$i\le j$, with the partition $(2^i1^{j-i})$, of length $j\le n+1$.
\begin{enumerate}\itemsep=0pt
\item To obtain the restriction of $V_{\theta_i+\theta_j}$ from
$D_{n+1}$ to $B_n$, take the sum of the
representations def\/ined by the partitions
\[
(2^i1^{j-i}),\quad (2^{i-1}1^{j-i+1}),\quad (2^i1^{j-i-1}),
\quad(2^{i-1}1^{j-i}),
\]
(where only those of length at most $n$ must be kept).
\item To obtain the restriction of $V_{\theta_i+\theta_j}$ from $B_n$
to $D_n$, make the same operation, except that if one of the
representation obtained is of the form $V_{\theta_n+\theta_i}=
V_{2\omega_n+\theta_i}$ (resp.\ $V_{2\theta_n}=V_{4\omega_n}$),
one has to add the mirror representation
$V_{2\omega_{n-1}+\theta_i}$ (resp. $V_{4\omega_{n-1}}$).
\end{enumerate}

\subsection{Decomposing quartics}\label{section5.3}

From the preceding formulas, and our decomposition for cubics,
we easily deduce the following statement:

\begin{proposition}
There exists integers $e^n_{i,j}$ and $f^n_i$ such that
the decomposition formula for the fourth symmetric power of a half-spin
representation is of the form
\[
S^4\Delta_+=\bigoplus_{0\le i,j\le n}e^n_{i,j}
V_{\theta_{n-i}+\theta_{n-j}}\oplus \bigoplus_{0\le i\le n}
f^n_iV_{2\omega_{n-1}+\theta_{n-i}}.
\]
Moreover $e^n_{i,j}=0$ if $i+j$ is odd and $f^n_i=0$ if $i$ is odd.
\end{proposition}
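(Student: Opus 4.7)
The plan is to classify the possible highest weights of irreducible constituents of $S^4\Delta_+$ directly from the explicit weight description of $\Delta_+$, and then to derive the two parity constraints by a simple sum-of-coordinates argument.

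Weights of $\Delta_+$ are vectors $\mu=\frac{1}{2}(\eta_1,\ldots,\eta_n)$ with $\eta_i\in\{\pm 1\}$ and an even number of minus signs. Summing any four such weights, every weight of $S^4\Delta_+\subset\Delta_+^{\otimes 4}$ has $\epsilon$-basis coordinates in $\{-2,-1,0,1,2\}$. Let then $\lambda=(a_1,\ldots,a_n)$ be a dominant weight in this range, so that $a_1\ge\cdots\ge a_{n-1}\ge|a_n|$. I split on the value of $a_n$: if $a_n\ge 0$, dominance forces $\lambda=(2,\ldots,2,1,\ldots,1,0,\ldots,0)=\theta_{n-i}+\theta_{n-j}$ for appropriate $0\le i\le j\le n$; if $a_n=-1$, then $\lambda=(2,\ldots,2,1,\ldots,1,-1)=2\omega_{n-1}+\theta_{n-i}$; if $a_n=-2$, then $\lambda=(2,\ldots,2,-2)=4\omega_{n-1}$. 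The last case must be excluded, and here is the one nontrivial input: any vector of weight $(2,\ldots,2,-2)$ in $\Delta_+^{\otimes 4}$ is a linear combination of pure tensors $v_1\otimes v_2\otimes v_3\otimes v_4$ in which each $v_a$ has weight $(1/2,\ldots,1/2,-1/2)$; this weight has a single (odd) minus sign and therefore lies in $\Delta_-$, not $\Delta_+$. Hence the weight $(2,\ldots,2,-2)$ is absent from $\Delta_+^{\otimes 4}$, and $V_{4\omega_{n-1}}$ cannot be a constituent of $S^4\Delta_+$.

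For the parity statement, the sum of coordinates of any weight of $\Delta_+$ equals $(n-2k)/2$ with $k$ even, so summing four such weights yields $2n-(k_1+k_2+k_3+k_4)$, always an even integer. Every highest weight $\lambda$ of an irreducible constituent of $S^4\Delta_+$ therefore satisfies $\sum_i a_i$ even. For $V_{\theta_{n-i}+\theta_{n-j}}$ one computes $\sum_i a_i=2n-i-j$, forcing $i+j$ even; for $V_{2\omega_{n-1}+\theta_{n-i}}$ one has $\sum_i a_i=2n-2-i$, forcing $i$ even.

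The only real obstacle is the exclusion of $V_{4\omega_{n-1}}$ in the classification step; everything else reduces to elementary enumeration of dominant weights subject to the coordinate range and the sum-of-coordinates bookkeeping.
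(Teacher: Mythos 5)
Your proof is correct, and the two halves of it relate differently to the paper's argument. The parity claim is established exactly as in the paper: the weights of $\Delta_+$ are $\tfrac12(\pm\epsilon_1\pm\cdots\pm\epsilon_n)$ with an even number of minus signs, so every weight of $S^4\Delta_+$ has even coordinate sum, which forces $i+j$ even for $\theta_{n-i}+\theta_{n-j}$ and $i$ even for $2\omega_{n-1}+\theta_{n-i}$. For the shape of the decomposition, however, you take a genuinely different and more self-contained route. The paper gets it for free from what precedes: $S^4\Delta_+\subset S^3\Delta_+\otimes\Delta_+$, and the already-proved decomposition of $S^3\Delta_+$ together with the tensor-product formulas of Section 5 show that only modules of the two listed types can occur (so the paper's proof explicitly treats ``only the last assertion''). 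You instead bound the highest weights directly: every weight of $\Delta_+^{\otimes 4}$ has integer coordinates in $\{-2,\ldots,2\}$, and the only dominant weights in that range are $\theta_{n-i}+\theta_{n-j}$ (case $a_n\ge 0$), $2\omega_{n-1}+\theta_{n-i}$ (case $a_n=-1$), and $4\omega_{n-1}$ (case $a_n=-2$); you then correctly kill the last one by observing that the weight $(2,\ldots,2,-2)$ could only arise from four copies of $\tfrac12(1,\ldots,1,-1)$, which lies in $\Delta_-$, so that weight space of $\Delta_+^{\otimes 4}$ is zero. This exclusion step is the one point the paper never has to address, because its tensor-product formulas simply never produce $V_{4\omega_{n-1}}$; your version buys independence from those formulas and from the cubic decomposition, at the cost of having to handle that boundary case explicitly, while the paper's version is shorter given the machinery already established.
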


\begin{proof} Only the last assertion needs to be proved. Recall that the
weights of the half-spin representation, after a suitable choice of a maximal
torus of ${\rm Spin}(V)$, are of the form $\frac{1}{2}(\pm\epsilon_1\pm\cdots
\pm\epsilon_n)$, where the number of minus signs is even. In particular the
sum of the coef\/f\/icients is $\frac{n}{2}$ minus some even integer. This implies
that if $a_1\epsilon_1+\cdots+a_n\epsilon_n$ is any weight of $S^4\Delta_+$,
then $a_1+\cdots+a_n$ equals $2n$ mod $2$. In particular, if
$\theta_{n-i}+\theta_{n-j}$ is a weight of $S^4\Delta_+$, then $i+j$ must
be even. Similarly, if $2\omega_{n-1}+\theta_{n-i}$ is a weight of
$S^4\Delta_+$, then $i$ must be even. \end{proof}

We expect the same phenomena as for cubics, that is:

\medskip

 \noindent {\bf Conjecture.} {\it The integers
$e^n_{i,j}$ and $f^n_i$ are independent of $n$.}

\medskip

We have checked this conjecture up to $n=20$ with the help of the program
LiE \cite{LiE}.

The coef\/f\/icients $f_i=f^n_i$ are all equal to zero in that range,
except $f_{16}=f_{20}=1$.
The f\/irst coef\/f\/icients $e_{i,j}=e^n_{i,j}$ are given by the following table,
where they are displayed in such a way that the f\/irst line gives the
coef\/f\/icients $e_{0,j}$, and the diagonal gives the coef\/f\/icients $e_{i,i}$.
\[
\begin{tabular}{ccccccccccccccccccccc}
 1& 0& 0& 0& 1& 0& 1& 0& 2& 0& 1& 0& 3& 0& 2& 0& 4& 0& 3& 0& 5 \\
  & 0& 0& 0& 0& 0& 0& 0& 0& 1& 0& 1& 0& 2& 0& 2& 0& 3& 0& 4& 0  \\
  &  & 0& 0& 0& 0& 1& 0& 0& 0& 2& 0& 1& 0& 3& 0& 2& 0& 5& 0& 3 \\
  &  &  & 0& 0& 0& 0& 1& 0& 1& 0& 1& 0& 2& 0& 3& 0& 3& 0& 4& 0  \\
  &  &  &  & 1& 0& 0& 0& 1& 0& 1& 0& 3& 0& 1& 0& 4& 0& 3& 0& 6 \\
  &  &  &  &  & 0& 0& 0& 0& 1& 0& 0& 0& 2& 0& 2& 0& 3& 0& 3& 0  \\
  &  &  &  &  &  & 1& 0& 0& 0& 2& 0& 1& 0& 3& 0& 2& 0& 5& 0& 3 \\
  &  &  &  &  &  &  & 0& 0& 0& 0& 1& 0& 1& 0& 2& 0& 2& 0& 4& 0  \\
  &  &  &  &  &  &  &  & 1& 0& 0& 0& 2& 0& 1& 0& 4& 0& 2& 0& 5 \\
  &  &  &  &  &  &  &  &  & 1& 0& 0& 0& 2& 0& 1& 0& 3& 0& 3& 0  \\
  &  &  &  &  &  &  &  &  &  & 1& 0& 0& 0& 2& 0& 1& 0& 4& 0& 2 \\
  &  &  &  &  &  &  &  &  &  &  & 0& 0& 0& 0& 2& 0& 1& 0& 3& 0  \\
  &  &  &  &  &  &  &  &  &  &  &  & 2& 0& 0& 0& 3& 0& 2& 0& 5 \\
  &  &  &  &  &  &  &  &  &  &  &  &  & 1& 0& 0& 0& 2& 0& 1& 0  \\
  &  &  &  &  &  &  &  &  &  &  &  &  &  & 1& 0& 0& 0& 3& 0& 1 \\
  &  &  &  &  &  &  &  &  &  &  &  &  &  &  & 1& 0& 0& 0& 3& 0  \\
  &  &  &  &  &  &  &  &  &  &  &  &  &  &  &  & 2& 0& 0& 0& 3 \\
  &  &  &  &  &  &  &  &  &  &  &  &  &  &  &  &  & 1& 0& 0& 0  \\
  &  &  &  &  &  &  &  &  &  &  &  &  &  &  &  &  &  & 2& 0& 0 \\
  &  &  &  &  &  &  &  &  &  &  &  &  &  &  &  &  &  &  & 1& 0 \\
  &  &  &  &  &  &  &  &  &  &  &  &  &  &  &  &  &  &  &  & 2
\end{tabular}
\]

We can try to prove this conjecture in the same way as we proved the
similar statement for cubics, that is, by induction on $n$.
So we restrict the above formula
for $S^4\Delta_+$ to $D_{n-1}$ and deduce an inductive relation for the
multiplicities.

To obtain this equation we need a formula for the tensor products
of $S^3\Delta_+\otimes\Delta_-$. One can check that for $p\ge 2$,
\begin{gather*}
V_{\omega_n+\omega_{n-p}}\otimes\Delta_+ =  \bigoplus_{
\substack{r\ge p\ge s\\ r+s-p\; {\rm even}}}V_{\theta_{n-r}+\theta_{n-s}}, \qquad
V_{\omega_n+\omega_{n-p}}\otimes\Delta_- =  \bigoplus_{
\substack{r\ge p\ge s\\ r+s-p\; {\rm odd}}}V_{\theta_{n-r}+\theta_{n-s}}.
\end{gather*}

One can then write down some inductive relations for the multiplicities
$e^n_{i,j}$ and $f^n_i$, which unfortunately are not suf\/f\/icient to
compute them all (contrary to the cubic case), and a fortiori not
suf\/f\/icient to prove the conjecture: some additional idea is needed.

Admitting the conjecture, we obtain the following recursive relations
for the multiplicities:
\begin{gather*}
e_{i-1,i-1}+e_{i,i}+e_{i+1,i+1} = \lfloor\frac{i}{4}\rfloor
+\delta_{i,0}+\delta_{i,odd}, \\
e'_{0,2i}+e_{1,2i-1}+e_{1,2i+1} = \binom{\lfloor\frac{i}{2}\rfloor+1}{2}, \\
e_{i-1,j}+e_{i,j-1}+e_{i+1,j}+e_{i,j+1} = \sum_{i\le 2p\le j}a_p
+ \sum_{i\le 2p+1\le j}b_p,
\end{gather*}
where we have let $e'_{0,j}=e_{0,j}+f_j$. Moreover the third type of equations
holds for $j>i>0$, with the caveat that for $i=1$, $e_{0,j}$ has to
be replaced by $e'_{0,j}$.

The f\/irst series of equations allow to compute the diagonal coef\/f\/icients
$e_{i,i}$. Their generating series is
\[
\sum_{i\ge 0}e_{i,i}x^i=\frac{1+x^9}{(1-x^4)(1-x^6)}.
\]
We conjecture that the generating series of the multiplicities $e_{0,i}$
and $f_i$ are
\[
\sum_{i\ge 0}e_{0,i}x^i=\frac{1}{(1-x^4)(1-x^6)(1-x^8)}, \qquad
\sum_{i\ge 0}f_{i}x^i=\frac{x^{16}}{(1-x^4)(1-x^6)(1-x^8)}.
\]
Once the coef\/f\/icients $e_{i,i}$, $e_{0,i}$ and $f_{i}$ are known,
the recursive relations above allow to compute all the multiplicities
$e_{i,j}$, and to write down their generating series as an explicit rational
function.

\section{Quartic equations of the secant variety}

We f\/irst make the following easy observation (see \cite{LMsec}):
the space $I_4(\sigma(S_+))$
of quartic equations of the secant variety to the spinor variety, is the
orthogonal in~$S^4\Delta_+^\vee$ to the subspace of~$S^4\Delta_+$ generated
by the fourth powers $(a+b)^4$, where $a$, $b$ are pure spinors (i.e., belong
to the cone over~$S_+$). Such a tensor decomposes into homogeneous components
that can be treated separately: the fourth powers $a^4$ generate the
Cartan components $V_{4\omega_n}$ of $S^4\Delta_+$; the terms $a^3b$
generate the image of $V_{3\omega_n}\otimes V_{\omega_n}\subset
S^3\Delta_+\otimes\Delta_+$ in  $S^4\Delta_+$; the terms $a^2b^2$
generate the image of $V_{2\omega_n}\otimes V_{2\omega_n}$ (or rather
its symmetric part). These
tensor products are known: we have
\begin{gather*}
V_{3\omega_n}\otimes V_{\omega_n}
 = \bigoplus_{p}V_{\theta_{n}+\theta_{n-2p}}, \qquad
S^2V_{2\omega_n} = \bigoplus_{p-q \;{\rm even}}V_{\theta_{n-2p}+\theta_{n-2q}}.
\end{gather*}

The only components that appear in both decomposition
are $V_{4\omega_n}$ and the $V_{\theta_{n}+\theta_{n-2p}}$'s,
for $p$ an even integer. This implies that
\[
\CC[\sigma (S_+)]_4\subset
\bigoplus_{\substack{p\ge q>0\\p-q \; {\rm even}}}
V_{\theta_{n-2p}+\theta_{n-2q}}^\vee
\oplus 2\bigoplus_{p\; {\rm even}}
V_{\theta_{n}+\theta_{n-2p}}^\vee
\oplus \bigoplus_{p\; {\rm odd}}
V_{\theta_{n}+\theta_{n-2p}}^\vee.
\]

\begin{theorem}\label{quartics}
\[
\CC[\sigma (S_+)]_4=
\bigoplus_{\substack{p\ge q\ge 1\\p-q \;{\rm even}\\ (p,q)\ne (1,1)}}
V_{\theta_{n-2p}+\theta_{n-2q}}^\vee
\oplus \bigoplus_{p\ne 1}
V_{\theta_{n}+\theta_{n-2p}}^\vee.
\]
\end{theorem}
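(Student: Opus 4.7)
The plan is to follow the approach of the cubic case. For each irreducible component $V_\lambda$ appearing in the upper bound just derived, one would construct a basis of $G$-$\mathrm{Hom}(S^4\Delta_+, V_\lambda)$, evaluate each basis element at the fourth power $v^4$ of a generic secant vector $v=u_E+u_F$, and determine from the resulting image which candidates survive as non-equations. The expansion
\[
v^4 = u_E^4 + 4u_E^3 u_F + 6u_E^2 u_F^2 + 4u_E u_F^3 + u_F^4
\]
partitions the evaluation into pieces feeding respectively into the three summands $V_{4\omega_n}$, image of $V_{3\omega_n}\otimes V_{\omega_n}$, and image of $S^2V_{2\omega_n}$ of the upper bound.

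For each candidate $V_\lambda$, the equivariant maps $S^4\Delta_+\to V_\lambda$ factoring through one of these three summands are, up to scalar, compositions of the $\beta_{p,q}^{r,s}$ of Section~\ref{section5} with Cartan projections, possibly preceded by a Clifford action on a spin factor. Substituting the explicit formula $\beta_{2k}(u_E,u_F)=\sum_{\ell(K)=k}e_K\wedge f_K$ of Section 2.4 reduces every evaluation to a combinatorial check in $\wedge V\otimes\wedge V$, in the spirit of the $\bar\psi_{2i,2j}$ calculations. For the components in the conclusion of the theorem, non-vanishing follows as in the proof that $\bar\psi_{2i,0}\ne 0$: pair with a highest-weight vector of $V_\lambda^\vee$ of the form $g_1\wedge\cdots\wedge g_{n-p}\otimes g'_1\wedge\cdots\wedge g'_{n-q}$, built from generic isotropic elements, and observe that the pairing is generically non-zero.

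For the doubled components $V_{\theta_n+\theta_{n-2p}}$ with $p$ even and $p\ge 2$, there are two a priori independent contributions: one from $V_{3\omega_n}\otimes V_{\omega_n}\to S^4\Delta_+$ applied to $u_E^3u_F+u_Eu_F^3$, one from $S^2V_{2\omega_n}\to S^4\Delta_+$ applied to $u_E^2u_F^2$. I expect these to give proportional images in $V_\lambda$, because both factor through the same Clifford contraction. Such a proportionality is the direct analog of Lemma~\ref{relations} in the cubic case, and it produces the one surviving copy of $V_\lambda$ in $\CC[\sigma(S_+)]_4$ together with one quartic equation in $V_\lambda^\vee$.

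The main obstacle is the full vanishing of the evaluations for the two outliers $V_{\theta_{n-2}+\theta_{n-2}}$ and $V_{\theta_n+\theta_{n-2}}$. I expect both to come from the same mechanism as Lemma~\ref{relations}: there exists an equivariant operator $\kappa$ whose image is contained in the kernel of the Cartan projection, and the image of the relevant piece of $v^4$ under the candidate equivariant map lies in the image of $\kappa$. For $V_{\theta_n+\theta_{n-2}}$ a single application of such a $\kappa$ should suffice; for $V_{\theta_{n-2}+\theta_{n-2}}$ a combination of two contractions applied to $u_E^2u_F^2$ is likely required, and the delicate point is to match coefficients and signs so that the cancellation is exact, without having to invoke the still-conjectural $n$-independent decomposition of $S^4\Delta_+$ from Section~\ref{section5.3}.
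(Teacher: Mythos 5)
Your overall frame (compute $\CC[\sigma(S_+)]_4$ as the dual of the span of fourth powers of secant vectors, split $(u_E+u_F)^4$ into its homogeneous pieces, and detect surviving components by pairing images with decomposable highest weight vectors built from isotropic vectors) is consistent with the paper, and your non-vanishing step for the generic components is essentially the paper's: there the tensor $e^2f^2$ is sent to $\beta_{n-2p}(e,f)\ot\beta_{n-2q}(e,f)$ and paired with vectors $g_1\wedge\cdots\wedge g_{n-2p}\ot g_1\wedge\cdots\wedge g_{n-2q}$ (with an index-shift $\beta_{n-2p+2}\ot\beta_{n-2q-2}$ when $p$, $q$ are odd, since $\beta_{n-2p}$ is then skew and kills $S^2\Delta_+$ --- a parity point your sketch does not notice). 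But the two statements that actually constitute the theorem beyond the easy upper bound are exactly the ones you leave as expectations, and in both cases the mechanism you guess is not the one that works. For the doubled components $V_{\theta_n+\theta_{n-2p}}$, $p$ even, the paper does not evaluate anything at a secant point and does not use a proportionality of two evaluations: it takes a highest weight vector $\sum_i v_i\ot v_i'$ of $V_{\theta_{n-2p}}\subset S^2\Delta_+$ and a highest weight vector $u$ of $\Delta_+$, observes that $\sum_i(uv_i)\ot(uv_i')$ and $\sum_i(u^2v_i)\ot v_i'$ are highest weight vectors of the copies of $V_{\theta_n+\theta_{n-2p}}$ in $S^2V_{2\omega_n}$ and in $V_{3\omega_n}\ot V_{\omega_n}$ respectively, and that both multiply to the same element $\sum_i u^2v_iv_i'$ of the fourth symmetric power; hence the two a priori copies merge into a single one inside the span of the $(a+b)^4$. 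Your stated reason (``both factor through the same Clifford contraction'') is not an argument and would in any case have to be replaced by precisely such a computation.

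The second gap is more serious. For the excluded components $V_{\theta_{n-2}+\theta_{n-2}}$ and $V_{\theta_n+\theta_{n-2}}$ you plan an exact cancellation in the style of Lemma~\ref{relations}, with signs to be matched; no such cancellation exists in the paper and none is needed: the exceptions arise because the only maps available to reach these Cartan components (the shifted composition through $V_{\theta_{n-2p+2}}\ot V_{\theta_{n-2q-2}}$) vanish identically precisely when $p=1$, i.e.\ these modules are simply not produced by the pieces $a^3b$ and $a^2b^2$ at all --- consistently with the fact that their multiplicities $e_{0,2}$ and $e_{2,2}$ in $S^4\Delta_+$ are zero. You give no construction of the hypothetical operator $\kappa$, no identification of which evaluation it is supposed to kill, and no way to verify the ``delicate'' coefficients, so this step is not a proof sketch but a hope. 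Finally, your plan requires a spanning set of $G$-${\rm Hom}(S^4\Delta_+,V_\lambda)$ for each candidate $\lambda$; in the cubic case this was obtained from the multiplicity-controlled module $S^2\Delta_+\ot\Delta_+$, but for quartics you would need control of the decomposition of $S^4\Delta_+$ (or of $S^2\Delta_+\ot S^2\Delta_+$ with its multiplicities), which is exactly the part of Section~\ref{section5.3} that is only conjectural --- and which the paper's proof deliberately avoids by working with the three pieces $a^4$, $a^3b$, $a^2b^2$ and the known decompositions of $V_{3\omega_n}\ot V_{\omega_n}$ and $S^2V_{2\omega_n}$. Without concrete substitutes for these three points, the proposal does not establish the theorem.
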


\begin{proof}
There are two things to prove. First, that the
$V_{\theta_{n}+\theta_{n-2p}}^\vee$'s, for $p$ even, appear in
$\CC[\sigma (S_+)]_4$ with multiplicity one.
Second, that the $V_{\theta_{n-2p}+\theta_{n-2q}}^\vee$'s,
for $p-q$ even, or $q=0$ and $p$ odd, have non zero multiplicity.

For the f\/irst assertion, recall that $V_{\theta_{n-2p}}$ is, for
$p$ even, a component of $S^2\Delta_+$, with multiplicity one.
Let $V=\sum_iv_i\otimes v'_i$ be a generator of the corresponding highest
weight line in  $S^2\Delta_+\subset\Delta_+\otimes \Delta_+$.
Denote by $u$ a highest weight vector of $\Delta_+$. Then
$\sum_i(uv_i)\otimes (uv'_i)$
is a highest weight vector in $S^2V_{2\omega_n}$, while
$\sum_i(u^2v_i)\otimes v'_i$
is a highest weight vector in $V_{3\omega_n}\otimes V_{\omega_n}$.
Since their images in $S^4V_{2\omega_n}$ are both equal to
$\sum_iu^2v_iv'_i$, we deduce that the components $V_{\theta_n+\theta_{n-2p}}$
inside $S^2V_{2\omega_n}$ and inside $V_{3\omega_n}\otimes V_{\omega_n}$
generate a unique copy of $V_{\theta_n+\theta_{n-2p}}$
inside $S^4V_{2\omega_n}$. This proves the f\/irst claim.

For the second assertion, consider a component
$V_{\theta_{n-2p}+\theta_{n-2q}}$ coming from $S^2V_{2\omega_n}$,
with $p\ge q$.
We want to check that the tensors of the form $e^2f^2$ in
$S^4V_{2\omega_n}$, for $e$ and $f$ in the cone over the spinor
variety, do generate such a component.

Suppose f\/irst that $p$ and $q$ are both even. Consider the map
\[
S^2V_{2\omega_n}\hookrightarrow S^2V_{\omega_n}\otimes S^2V_{\omega_n}
\stackrel{\beta_{n-2p}\otimes\beta_{n-2q}}{\lra}
V_{\theta_{n-2p}}\otimes V_{\theta_{n-2q}}.
\]
Since $\beta_{n-2p}(e,e)=0$ (at least for $p\ge 1$), the tensor $e^2f^2$
is mapped by this morphism to
\[
\beta_{n-2p}(e,f)\otimes \beta_{n-2q}(e,f) \in
V_{\theta_{n-2p}}\otimes V_{\theta_{n-2q}}.
\]
To check that $\beta_{n-2p}(e,f)\otimes \beta_{n-2q}(e,f)$ has a non zero
projection to the Cartan component $V_{\theta_{n-2p}+\theta_{n-2q}}$,
we just need to pair it with highest weight vectors of that component
(which is self-dual). Such highest weight vectors are of the form
\[
g_1\wedge\cdots\wedge g_{n-2p}\otimes g_1\wedge\cdots\wedge g_{n-2q},
\]
where the vectors $g_1, \ldots ,g_{n-2q}$ generate an isotropic subspace
of $V$ (recall that we have supposed $p\ge q$). So it suf\/f\/ices to check
that $\beta_{n-2p}(e,f)$ pairs non trivially with a generic tensor
$g_1\wedge\cdots\wedge g_{n-2p}$. But this is obvious, since such tensors
generate the whole space $V_{\theta_{n-2p}}$.

Now suppose that $p$ and $q$ are both odd. Then we cannot  use the same
morphism as before, and instead we use the composition
\[
S^2V_{2\omega_n}\hookrightarrow S^2V_{\omega_n}\otimes S^2V_{\omega_n}
\stackrel{\beta_{n-2p+2}\otimes\beta_{n-2q-2}}{\lra}
V_{\theta_{n-2p+2}}\otimes V_{\theta_{n-2q-2}}\lra
V_{\theta_{n-2p}}\otimes V_{\theta_{n-2q}}.
\]
We conclude as in the previous case, except when $p=1$ for which we get
zero. By symmetry we can always get the component
$V_{\theta_{n-2p}+\theta_{n-2q}}$, except in the case where $(p,q)=(1,1)$.
\end{proof}

Comparing with the decomposition of $S^4\Delta_+^\vee$ we can of course
deduce the decomposition of $I_4(\sigma(S_+))$ into irreducible components.
It has the same ``odd'' part (by this we mean the same multiplicities
on the components of type $V_{2\omega_{n-1}+\theta_{n-j}}$), while
the multiplicities of components $V_{\theta_{n-i}+\theta_{n-j}}$ in
the ``even'' part are given, for low values of $i$ and $j$, by the
following table:
\[
\begin{tabular}{ccccccccccccccccccccc}
 0& 0& 0& 0& 0& 0& 0& 0& 1& 0& 0& 0& 2& 0& 1& 0& 3& 0& 2& 0& 4 \\
  & 0& 0& 0& 0& 0& 0& 0& 0& 1& 0& 1& 0& 2& 0& 2& 0& 3& 0& 4& 0  \\
  &  & 0& 0& 0& 0& 0& 0& 0& 0& 1& 0& 1& 0& 2& 0& 2& 0& 4& 0& 3 \\
  &  &  & 0& 0& 0& 0& 1& 0& 1& 0& 1& 0& 2& 0& 3& 0& 3& 0& 4& 0  \\
  &  &  &  & 0& 0& 0& 0& 0& 0& 1& 0& 2& 0& 1& 0& 3& 0& 3& 0& 5 \\
  &  &  &  &  & 0& 0& 0& 0& 1& 0& 0& 0& 2& 0& 2& 0& 3& 0& 3& 0  \\
  &  &  &  &  &  & 0& 0& 0& 0& 1& 0& 1& 0& 2& 0& 2& 0& 4& 0& 3 \\
  &  &  &  &  &  &  & 0& 0& 0& 0& 1& 0& 1& 0& 2& 0& 2& 0& 4& 0  \\
  &  &  &  &  &  &  &  & 0& 0& 0& 0& 1& 0& 1& 0& 3& 0& 2& 0& 4 \\
  &  &  &  &  &  &  &  &  & 1& 0& 0& 0& 2& 0& 1& 0& 3& 0& 3& 0  \\
  &  &  &  &  &  &  &  &  &  & 0& 0& 0& 0& 1& 0& 1& 0& 3& 0& 2 \\
  &  &  &  &  &  &  &  &  &  &  & 0& 0& 0& 0& 2& 0& 1& 0& 3& 0  \\
  &  &  &  &  &  &  &  &  &  &  &  & 1& 0& 0& 0& 2& 0& 2& 0& 4 \\
  &  &  &  &  &  &  &  &  &  &  &  &  & 1& 0& 0& 0& 2& 0& 1& 0  \\
  &  &  &  &  &  &  &  &  &  &  &  &  &  & 0& 0& 0& 0& 2& 0& 1 \\
  &  &  &  &  &  &  &  &  &  &  &  &  &  &  & 1& 0& 0& 0& 3& 0  \\
  &  &  &  &  &  &  &  &  &  &  &  &  &  &  &  & 1& 0& 0& 0& 2 \\
  &  &  &  &  &  &  &  &  &  &  &  &  &  &  &  &  & 1& 0& 0& 0  \\
  &  &  &  &  &  &  &  &  &  &  &  &  &  &  &  &  &  & 1& 0& 0 \\
  &  &  &  &  &  &  &  &  &  &  &  &  &  &  &  &  &  &  & 1& 0 \\
  &  &  &  &  &  &  &  &  &  &  &  &  &  &  &  &  &  &  &  & 1
\end{tabular}
\]

Of course these multiplicities will be independent of $n$ if and only if
the conjecture in Section~\ref{section5.3} is correct.

For example, we get (denoting $S_n$ the spinor variety $S_+$ in type $D_n$):
\begin{gather*}
 I_4(\sigma (S_7))=V_{\omega_4}, \qquad
 I_4(\sigma (S_8))=V_{\omega_1+\omega_5}\oplus V_{2\omega_8}, \\
 I_4(\sigma (S_9))=V_{\omega_2+\omega_6}\oplus V_{\omega_1+2\omega_9}
\oplus V_{\omega_8}\oplus V_{\omega_6}\oplus V_{\omega_4}\oplus V_{\omega_0}.
\end{gather*}
The f\/irst of these statements is contained in \cite[Theorem 1.3]{LW2},  where it is proved
that $V_{\omega_4}$ generates the full ideal $I(\sigma (S_7))$.

Comparing the previous theorem with Theorem~3.11 in \cite{LW1}, we deduce:

\begin{corollary}
The kernel of the restriction map $\CC[\sigma(S_+)]_4\ra \CC[\tau(S_+)]_4$
is equal to
\[
\bigoplus_{\substack{p\ge q\ge 1\\p-q \;{\rm even}\\  p+q>4}}
V_{\theta_{n-2p}+\theta_{n-2q}}^\vee
\oplus \bigoplus_{p>4}
V_{\theta_{n}+\theta_{n-2p}}^\vee.
\]
\end{corollary}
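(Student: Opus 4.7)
The plan is to exploit the $\mathrm{Spin}(V)$-equivariance of the inclusion $\tau(S_+)\subset\sigma(S_+)$: the restriction map $\CC[\sigma(S_+)]\to\CC[\tau(S_+)]$ is a surjection of $\mathrm{Spin}(V)$-modules, so by complete reducibility the multiplicity of each irreducible summand in its degree-$4$ kernel is simply the difference between its multiplicity in $\CC[\sigma(S_+)]_4$ (given by Theorem~\ref{quartics}) and its multiplicity in $\CC[\tau(S_+)]_4$ (given by Theorem~3.11 of \cite{LW1}). Hence the corollary reduces to bookkeeping, once both decompositions are spelled out.

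First I would apply the formula of \cite{LW1} with $d=4$: one enumerates the tuples $a=(a_1,a_2,\ldots)$ of nonnegative integers satisfying $2\sum a_p\le\sum pa_p\le 4$. A short case analysis yields exactly six tuples, giving six distinct (hence multiplicity-one) irreducibles: $V_{4\omega_n}$ from $a=0$; the three modules $V_{2\omega_n+\omega_{n-2p}}$ for $p\in\{2,3,4\}$; the module $V_{\omega_{n-2}+\omega_{n-6}}$ from $a_1=a_3=1$; and $V_{2\omega_{n-4}}$ from $a_2=2$.

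Next I would translate these to the $\theta$-notation used in Theorem~\ref{quartics}, via $\theta_n=2\omega_n$, $\theta_{n-1}=\omega_{n-1}+\omega_n$, and $\theta_i=\omega_i$ otherwise, and locate each of them inside the decomposition of $\CC[\sigma(S_+)]_4$. They correspond to $p\in\{0,2,3,4\}$ in the second sum and to $(p,q)\in\{(2,2),(3,1)\}$ in the first sum of Theorem~\ref{quartics}; in each case the multiplicity on the $\sigma$ side is also one, so the subtraction is clean and yields precisely the complementary indexing set: in the first sum the surviving pairs are those with $p\ge q\ge 1$, $p-q$ even, and $p+q>4$; in the second sum, those with $p>4$. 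The only delicate step is the index translation between the two conventions, together with the routine check that each of the six relevant irreducibles does appear in Theorem~\ref{quartics}: once this is consistent, the corollary drops out mechanically.
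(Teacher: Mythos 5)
Your proposal is correct and is essentially the paper's own argument: the corollary is stated there as a direct comparison of Theorem~\ref{quartics} with the $d=4$ case of Theorem~3.11 of \cite{LW1}, which is exactly the subtraction of multiplicities you carry out. Your enumeration of the six tuples $a$, the resulting modules $V_{4\omega_n}$, $V_{2\omega_n+\omega_{n-2p}}$ ($p=2,3,4$), $V_{\omega_{n-2}+\omega_{n-6}}$, $V_{2\omega_{n-4}}$, and their identification with the indices $p\in\{0,2,3,4\}$ and $(p,q)\in\{(2,2),(3,1)\}$ in Theorem~\ref{quartics} all check out, yielding precisely the complementary conditions $p+q>4$ and $p>4$.
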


We also remark that quartic equations are not always induced by
cubic equations.

\begin{proposition}
There is a component $V_{\omega_{n-3}+\omega_{n-7}}^\vee$ inside
$I_4(\sigma(S_+))$, of multiplicity one, consisting in quartic equations
which are not induced by cubics -- hence generators of $I(\sigma(S_+))$.
\end{proposition}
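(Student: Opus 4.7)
The plan is to split the statement into two assertions: (a) the isotypic component $V_{\omega_{n-3}+\omega_{n-7}}^\vee$ appears in $I_4(\sigma(S_+))$ with multiplicity exactly one; (b) it is not contained in the image of the multiplication map $\mu\colon\Delta_+^\vee\otimes I_3(\sigma(S_+))\to I_4(\sigma(S_+))$, so that its elements are minimal generators of the ideal.

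For (a), I would observe that $\omega_{n-3}+\omega_{n-7}=\theta_{n-3}+\theta_{n-7}$ has both indices differing from $n$ by an odd number, whereas every constituent of $\CC[\sigma(S_+)]_4$ appearing in Theorem~\ref{quartics} is of the form $V_{\theta_{n-2p}+\theta_{n-2q}}^\vee$ or $V_{\theta_n+\theta_{n-2p}}^\vee$, with indices differing from $n$ by an even number. Therefore $V_{\omega_{n-3}+\omega_{n-7}}$ is absent from $\CC[\sigma(S_+)]_4$, and its multiplicity in $I_4(\sigma(S_+))$ coincides with the multiplicity $e^n_{3,7}$ inside $S^4\Delta_+^\vee$. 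Reading the entry at row~$3$, column~$7$ of the table in Section~\ref{section5.3}, this multiplicity equals~$1$.

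For (b), I would show that $V_{\omega_{n-3}+\omega_{n-7}}$ is not a summand of $\Delta_+\otimes C$ for any irreducible constituent $C$ of $I_3(\sigma(S_+))$, which is the precise condition for its copy in $I_4(\sigma(S_+))$ to miss the image of $\mu$. By Theorem~\ref{cubics}, each such $C$ is isomorphic either to $V_{\omega_n+\omega_{n-p}}$ with $p=2j\ge 12$, or to $V_{\omega_{n-1}+\omega_{n-p}}$ with $p=2j+1\ge 9$. Using the formula recalled in Section~\ref{section5.3},
\[
V_{\omega_n+\omega_{n-p}}\otimes\Delta_\pm=\bigoplus_{\substack{r\ge p\ge s\\ r+s-p\;\text{even/odd}}}V_{\theta_{n-r}+\theta_{n-s}},
\]
combined with the $D_n$ outer automorphism (which swaps $\Delta_+$ and $\Delta_-$ and fixes every $V_{\theta_{n-r}+\theta_{n-s}}$ with $r,s\ge 2$) to treat the second family, the component $V_{\theta_{n-7}+\theta_{n-3}}$ can appear only when $\{r,s\}=\{7,3\}$; this forces $p\le r=7$. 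But $p\ge 9$ in every case, a contradiction. Hence this isotypic component of $I_4(\sigma(S_+))$ lies entirely outside the image of $\mu$, and its elements are minimal generators of $I(\sigma(S_+))$.

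The only delicate step is the multiplicity input $e^n_{3,7}=1$ underlying (a), which rests on the branching and recursion arguments developed in Section~\ref{section5.3}; the non-induction argument in (b) then reduces to a transparent inequality on the indices $(p,r,s)$, which I expect to be the routine part of the proof.
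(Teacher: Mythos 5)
Your proposal is correct and follows essentially the same route as the paper: the non-induction part is proved there exactly as in your step (b), by noting that tensoring any constituent $V_{\omega_n+\omega_{n-2p}}$ or $V_{\omega_{n-1}+\omega_{n-2p-1}}$ (with $p\ge 4$) of $I_3(\sigma(S_+))$ with $\Delta_+$ yields only components $V_{\theta_{n-r}+\theta_{n-s}}$ with $r\ge p$, hence never $V_{\omega_{n-3}+\omega_{n-7}}$, so $\Delta_+^\vee\otimes I_3(\sigma(S_+))$ misses this module. Your step (a) likewise matches the paper's implicit reasoning (parity exclusion from the degree-four coordinate ring plus the multiplicity $e_{3,7}=1$ from the tables of Section 5.3), and the caveat you flag about that multiplicity input is shared by the paper itself.
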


\begin{proof}
The tensor product formulas given in Section~\ref{section5} imply that the tensor
product of $V_{\omega_{n}{+}\omega_{n{-}2p}}$ or $V_{\omega_{n-1}+\omega_{n-2p-1}}$
by $\Delta_+$, for $p\ge 4$, does not contain any copy of
$V_{\omega_{n-3}+\omega_{n-7}}$. Therefore the tensor product of
 $I_3(\sigma(S_+))$ by $\Delta_+^\vee$ cannot contain
$V_{\omega_{n-3}+\omega_{n-7}}^\vee$, which implies our claim. \end{proof}

\section{Freudenthal varieties}

The spinor varieties belong to the family of Freudenthal varieties
\[
LG\subset \PP(\wedge^{\langle n\rangle}\CC^{2n}),
\qquad G\subset \PP(\wedge^n\CC^{2n}),
\qquad S_+\subset \PP(\Delta_+)
\]
which share, especially for $n=3$, many remarkable properties \cite{ky,LM}.
Here we denoted $G=G(n,2n)$ the usual
Grassmannian,
and $LG=LG(n,2n)$ the Lagrangian Grassmannian, in their respective
Pl\"ucker embeddings. Moreover we restrict to the spinor varieties
of type $D_{2n}$. The varieties $LG$, $G$, $S_+$ are then Hermitian symmetric
spaces of the same rank $n$. In fact they can be
considered as models of the same variety over the (complexif\/ied)
normed algebras~$\RR$,~$\CC$ and~$\HH$. For $n=3$ there is even an
exceptional Freudenthal variety ${\mathbf G}$ over the Cayley algebra~$\OO$,
which is the unique compact Hermitian symmetric space  of type~$E_7$.

Every Freudenthal variety is easily seen to be 3-transitive, and this leads
to analogs of the homogeneous spaces embeddings discussed after Proposition
\ref{3-tr}:
\begin{alignat*}{4}
 & Sp_n\hookrightarrow LG,  \qquad && Sp_{2n}/GL_n\hookrightarrow LG\times LG,
 \qquad && Sp_{2n}/O_n\hookrightarrow LG\times LG\times LG, & \\
 & GL_n\hookrightarrow G, \qquad && GL_{2n}/GL_n\times GL_n\hookrightarrow G\times G,
 \qquad && GL_{2n}/GL_n\hookrightarrow G\times G\times G, & \\
 & SO_n\hookrightarrow S_+, \qquad && SO_{2n}/GL_n\hookrightarrow S_+\times S_+,
 \qquad && SO_{2n}/Sp_n\hookrightarrow S_+\times S_+\times S_+, & \\
&  E_6\hookrightarrow {\mathbf G}, \qquad && E_7/\CC^*E_6\hookrightarrow {\mathbf G}\times  {\mathbf G},
 \qquad && E_7/F_4\hookrightarrow {\mathbf G}\times  {\mathbf G}\times  {\mathbf G}.&
\end{alignat*}
Of course this is strongly reminiscent of Freudenthal's magic square and
its higher rank generalizations~\cite{LM}.

Cubic equations of the secant variety of a Freudenthal variety $F\subset
\PP V_{\omega}$
can be described uniformly. In fact the decomposition of $S^3V_{\omega}$ is
known, and there are two types of isotypic components:
\begin{enumerate}\itemsep=0pt
\item Those whose highest weight does not appear among the weights
of $V_{2\omega}\otimes V_{\omega}$; as we have already noticed,
the duals of these components must belong to $I_3(\sigma(F))$.
\item Those whose highest weight does appear among the weights
of $V_{2\omega}\otimes V_{\omega}$. In fact this tensor product turns
out to be multiplicity
free; we write it down as
\[
V_{2\omega}\otimes V_{\omega}=\bigoplus_iV_{\omega+\Omega_i}.
\]
\end{enumerate}

Let us discuss the case of $G=G(n,2n)$, the usual Grassmannian.
Here $U^\vee=\wedge^n\CC^{2n}$, and the decomposition of $S^3U$
follows from the computations of Chen, Garsia and Remmel \cite{CGR}.
In fact these authors
compute the plethysm $S^3(S^n)$ rather than $S^3(\wedge^n)$,
but its is known that $S^3(\wedge^n)$ is dual to $S^3(S^n)$ for $n$
even; for $n$ odd $S^3(\wedge^n)$ is dual to $\wedge^3(S^n)$, which is also
computed in \cite{CGR}. Here by ``dual'', we mean that the
highest weights of the irreducible components are coded by dual
partitions, with the same multiplicities. This yields:
\[
 S^3(\wedge^n)= \bigoplus_{\substack{a,b\le n\\ a\le 2b, \; b\le 2a}}
m_{a,b}V_{\omega_{n-a}+\omega_{n+a-b}+\omega_{n+b}},
\]
where the multiplicity $m_{a,b}$ is given by the following rule.
If $b\ge a$, then $m_{a,b}=\lfloor\frac{2a-b+1}{6}\rfloor$ if $2a-b=1$ mod $6$
or  $a$ and $b$ are both even.
Otherwise $m_{a,b}=\lceil\frac{2a-b+1}{6}\rceil$.
If $a>b$,  then $m_{a,b}=\lfloor\frac{2b-a+1}{6}\rfloor$ if $2b-a=1$ mod $6$
or  $a$ and $b$ are both even. Otherwise $m_{a,b}=\lceil\frac{2b-a+1}{6}\rceil$.

Observe that the decomposition of $S^3(\wedge^n)$ is thus notably more
complicated that the decomposition of $S^3\Delta_+$. But most components,
more precisely all those for which $a\ne b$,
will automatically vanish on $\sigma(G)$, since
\[
(\wedge^n)^{(2)}\otimes \wedge^n =
\bigoplus_{a\le n}V_{\omega_{n-a}+\omega_{n}+\omega_{n+a}}.
\]
This indicates that we should pay special attention to the multiplicities
$m_a:=m_{a,a}$, which are given by the formula
\[
m_{6r+s}=r+1-\delta_{s,1}, \qquad r\ge 0,\quad 0\le s\le 5.
\]
Let us compute the generating series of these multiplicities:
\begin{gather*}
\sum_{k\ge 0}m_kx^k = \big(1+x+x^2+x^3+x^4+x^5\big)\sum_{r\ge 0}(r+1)x^{6r}
  -\sum_{r\ge 0}x^{6r+1} \\
 \hphantom{\sum_{k\ge 0}m_kx^k}{} = \big(1+x+x^2+x^3+x^4+x^5\big)\big(1-x^6\big)^{-2}-x\big(1-x^6\big)^{-1} \\
\hphantom{\sum_{k\ge 0}m_kx^k}{}= \frac{1}{1-x^6}\left(\frac{1}{1-x}-x\right)=\frac{1}{1-x^6}\frac{1-x+x^2}{1-x} \\
\hphantom{\sum_{k\ge 0}m_kx^k}{}= \frac{1}{1-x^6}\frac{1+x^3}{1-x^2}=\frac{1}{(1-x^2)(1-x^3)}.
\end{gather*}
This is the rational function we already met in Theorem \ref{cubic-dec}!

One can check that the same phenomenon holds for the Lagrangian Grassmannian.
We f\/inally get the uniform statement:

\begin{theorem}\label{theorem5}
Let $X\subset\PP(V_\omega)$ be a Freudenthal variety of rank $n$.
Then there exist dominant weights $\Omega_0=2\omega,\Omega_1, \ldots ,\Omega_n$
such that:
\begin{enumerate}\itemsep=0pt
\item[$1.$] for any $k\le l$, one has
\[
V_{k\omega}\otimes V_{l\omega}=\bigoplus_{0\le i_1\le\cdots\le i_k\le n}
V_{(l-k)\omega+\Omega_{i_1}+\cdots+\Omega_{i_k}},
\]
in particular
\[
V_{\omega}\otimes V_\omega=\bigoplus_{i=0}^nV_{\Omega_i},\qquad
V_{2\omega}\otimes V_\omega=\bigoplus_{i=0}^nV_{\omega+\Omega_i};
\]
\item[$2.$] the multiplicity $m_i$ of $V_{\omega+\Omega_i}$ inside $S^3V_\omega$
is given by the generating series
\[
\sum_{i\ge 0}m_ix^i=\frac{1}{(1-x^2)(1-x^3)};
\]
\item[$3.$] the degree three part of $\CC[\sigma(X)]$ is
\[
\CC[\sigma(X)]_3=\bigoplus_{i\ne 1}V_{\omega+\Omega_i}^\vee.
\]
\end{enumerate}
\end{theorem}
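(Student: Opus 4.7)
The three parts can be attacked largely in parallel: Parts 1 and 2 reduce to explicit representation-theoretic computations, carried out family by family; Part 3 admits a clean uniform framework whose only subtle step is a non-vanishing check.

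\textbf{Part 1.} For $G=G(n,2n)$ the tensor products are computed by Littlewood--Richardson applied to wedge powers of the vector representation. For $LG=LG(n,2n)$ and $S_+$ one follows the Howe--Tan--Willenbring strategy of Section~\ref{section5}: compute in $\mathfrak{gl}_{2n}$ and restrict to the appropriate subalgebra. For the $E_7$-variety $\mathbf{G}$ (of rank $3$), the required tensor products are small enough to verify directly (e.g.\ with LiE). In each case, setting $\Omega_0 = 2\omega$ and letting $\Omega_1,\ldots,\Omega_n$ be the other highest weights appearing in $V_\omega \otimes V_\omega$, the general formula for $V_{k\omega}\otimes V_{l\omega}$ follows by induction on $k$ using associativity of Cartan products.

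\textbf{Part 2.} For $S_+$ in type $D_{2n}$ this is Theorem~\ref{cubic-dec}. For $G$, the diagonal multiplicities $m_{a,a}$ have been computed above from the Chen--Garsia--Remmel plethysm $S^3(\wedge^n)$. For $LG$ the analogous plethysm $S^3(S^n)$ in type $C$ works in the same way. For $\mathbf{G}$ a direct computation confirms the common generating series $1/((1-x^2)(1-x^3))$.

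\textbf{Part 3.} Here I would argue uniformly. Dualizing, $\CC[\sigma(X)]_3^\vee$ is the linear span in $S^3V_\omega$ of the cubes $(a+b)^3$ for $a,b\in\hat X$, where $\hat X\subset V_\omega$ is the affine cone. Polarizing, this equals $\mathrm{span}\{a^2\cdot b : a,b\in\hat X\}$; and because $\{a^2 : a\in\hat X\}$ spans $V_{2\omega}$ (the Veronese of $X$ is non-degenerate in $\PP V_{2\omega}$) while $\{b : b\in\hat X\}$ spans $V_\omega$, it equals $V_{2\omega}\cdot V_\omega$, the image of the multiplication map $m : V_{2\omega}\otimes V_\omega \to S^3V_\omega$. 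Multiplicity-freeness of $V_{2\omega}\otimes V_\omega$ (Part 1) bounds the multiplicity of every $V_{\omega+\Omega_i}$ in the image by one, and Schur's lemma forces the image to vanish on components with $m_i=0$. Combined with $m_1 = 0$ and $m_i \ge 1$ for $i\ne 1$ from Part 2, the desired equality $\CC[\sigma(X)]_3^\vee = \bigoplus_{i\ne 1}V_{\omega+\Omega_i}$ follows once one checks non-vanishing of $m$ on each $V_{\omega+\Omega_i}$ with $i\ne 1$.

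\textbf{Main obstacle.} The non-vanishing for $i\ne 1$ is the crux. The case $i=0$ is immediate, since $V_{3\omega} = V_{\omega+\Omega_0}$ is the Cartan product and is clearly reached by multiplication. For $i \ge 2$ I would imitate Theorem~\ref{cubics}: pick the essentially unique equivariant map $V_{2\omega}\otimes V_\omega \to V_{\omega+\Omega_i}$ (unique by multiplicity-freeness), evaluate it at a generic secant point $a+b$ on the Freudenthal cone, and pair the image against a highest weight vector of $V_{\omega+\Omega_i}^\vee$ of the form $g_1\wedge\cdots\otimes u_G$, analogously to the spinor computation. A case-free argument exploiting the common cominuscule structure of Freudenthal varieties would be preferable, but I expect a modest amount of case-by-case verification to be unavoidable here.
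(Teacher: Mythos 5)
Your proposal follows essentially the same route as the paper: parts 1 and 2 via family-by-family decompositions (the Chen--Garsia--Remmel plethysm for the Grassmannian, Theorem~\ref{cubic-dec} for the spinor case, branching/direct computation for the rest), and part 3 by identifying $\CC[\sigma(X)]_3^\vee$ with the image of the multiplicity-free product $V_{2\omega}\otimes V_\omega$ inside $S^3V_\omega$, so that components absent from this product give equations, the surviving components have multiplicity at most one, and non-vanishing on each $V_{\omega+\Omega_i}$ with $i\ne 1$ is checked by evaluating at a generic secant point as in Theorem~\ref{cubics}. The verifications you defer (the Lagrangian and exceptional cases, and the non-vanishing outside the spinor case) are left at the same level of detail in the paper itself, which disposes of them with ``one can check''.
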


Explicitly, the weights $\Omega_i$ are the following:
\[
\Omega_i = \begin{cases}
2\omega_{n-i} & \text{in type $C_n$}, \\
\omega_{n-i}+\omega_{n+i} & \text{in type $A_{2n-1}$}, \\
\theta_{2n-2i} & \text{in type $D_{2n}$}.
\end{cases}
\]

\section{A toy case}

It may be interesting to observe that the list of Freudenthal varieties can
be completed by two ``degenerate'' cases,
\[
V=v_n(\PP^1)\subset \PP(S^n\CC^{2}),
\qquad P=(\PP^1)^n\subset \PP((\CC^{2})^{\otimes n}).
\]
The complete series $V$, $P$, $LG$, $V$, $S_+$, for a given $n$, has in particular the nice property
that each variety is a (special) linear section of the next variety in the series.

The case of the rational normal curve $V$ is the simplest one. Its secant
varieties are well understood (see e.g.~\cite[Part I, 1.3]{iaka}).
In particular the f\/irst secant variety
has a nice desingularization in terms of the {\it secant bundle}.
For simplicity
let $U=\CC^{2}$. On the projective plane $\PP^2=\PP(S^2U)$, the secant bundle is the rank two
vector bundle $E$ def\/ined by the exact sequence
\begin{gather}\label{eq1}
0\ra S^{n-2}U^\vee\otimes\cO_{\PP^2}(-1)\ra S^{n}U^\vee\otimes\cO_{\PP^2}\ra E\ra 0.
\end{gather}
Obviously $E$ is generated by global sections and $H^0(\PP^2, E)=S^{n}U^\vee$. The line bundle
$\cO_E(1)$ def\/ines a morphism $\eta$ from the threefold $\PP(E)$ to $\PP (H^0(\PP^2, E)^\vee)=\PP(S^nU)$.
The next lemma is contained in \cite[Section 3]{kanev}.

\begin{lemma}
The map $\eta : \PP(E)\ra\PP(S^nU)$ is a desingularization of $\sigma(V)$. It induces
an isomorphism
\[
\CC [\sigma(V)]_k\simeq H^0(\PP(E),\cO_E(k))=H^0(\PP^2, S^kE).
\]
\end{lemma}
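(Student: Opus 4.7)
The statement splits into a geometric half (that $\eta$ is a desingularization of $\sigma(V)$) and a cohomological half (the identification of section spaces).

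For the geometric half, I would first construct $\eta$ explicitly from the defining sequence~\eqref{eq1}. Let $\pi:\PP(E)\to\PP^2$ be the projection. Pulling back the surjection $S^nU^\vee\otimes\cO_{\PP^2}\twoheadrightarrow E$ and composing with the tautological quotient $\pi^*E\twoheadrightarrow \cO_E(1)$ yields a surjection $S^nU^\vee\otimes\cO_{\PP(E)}\twoheadrightarrow \cO_E(1)$, which defines a morphism $\eta:\PP(E)\to\PP(S^nU)$ with $\eta^*\cO(1)=\cO_E(1)$. To identify the image with $\sigma(V)$, I would use apolarity: at $[q]\in\PP^2$ with $q$ factoring as $\ell_1\ell_2$, the fiber $E_{[q]}=S^nU^\vee/q\cdot S^{n-2}U^\vee$ is 2-dimensional, and its projectivization inside $\PP(S^nU)$ is exactly the chord of $V$ through $[\ell_1^n]$ and $[\ell_2^n]$ (the tangent line at $[\ell^n]$ when $\ell_1=\ell_2=\ell$, by taking a limit). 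Sweeping $[q]$ over $\PP^2$ therefore produces $\sigma(V)$ as the image of $\eta$.

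Birationality of $\eta$ follows from uniqueness of Waring decompositions on the open locus of strict secant points $[\alpha\ell_1^n+\beta\ell_2^n]$ with $\ell_1\ne\ell_2$ and $\alpha,\beta\ne 0$: the unordered pair $\{\ell_1,\ell_2\}$, hence $[q]\in\PP^2$ and the corresponding line in $E_{[q]}$, is recovered uniquely. Since $\PP(E)$ is smooth as a $\PP^1$-bundle over $\PP^2$, this makes $\eta$ a desingularization.

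For the cohomological half, the standard projective-bundle identities $\pi_*\cO_E(k)=S^kE$ and $R^i\pi_*\cO_E(k)=0$ for $i>0$, $k\ge 0$, give via Leray
\[
H^0(\PP(E),\cO_E(k))=H^0(\PP^2,S^kE).
\]
To identify the left-hand side with $\CC[\sigma(V)]_k$, I would combine $\eta^*\cO_{\PP(S^nU)}(1)=\cO_E(1)$ with two classical properties of $\sigma(V)$: it has rational singularities (so $\eta_*\cO_{\PP(E)}=\cO_{\sigma(V)}$ and higher direct images vanish) and is projectively normal (so $H^0(\sigma(V),\cO(k))=\CC[\sigma(V)]_k$). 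Both are well-known from the description of $\sigma(V)$ as the determinantal variety cut out by $3\times 3$ minors of a generic catalecticant matrix. The projection formula then closes the argument.

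The main obstacle is this last cohomological step, where the rational-singularity and projective-normality inputs for $\sigma(V)$ are needed. If one wishes to avoid invoking them, a self-contained route is to compute $H^0(\PP^2,S^kE)$ directly by resolving $S^kE$ via the Koszul complex derived from~\eqref{eq1}, and then match the resulting dimensions with the Hilbert function of $\sigma(V)$ computed from the Eagon--Northcott resolution of the catalecticant ideal; this trades conceptual clarity for a combinatorial verification.
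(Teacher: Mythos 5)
Your proposal is essentially correct, and it is worth pointing out that the paper itself gives no argument for this lemma: it is quoted from Kanev's paper (reference \cite{kanev}, Section~3), so your write-up supplies a proof where the paper has only a citation. Your geometric half is sound: under the Grothendieck convention the fibre of $\eta$ over $[q]\in\PP(S^2U)$ is the projectivization of the annihilator of $q\cdot S^{n-2}U^\vee$ inside $S^nU$, which by apolarity is $\langle \ell_1^n,\ell_2^n\rangle$ when $q=\ell_1\ell_2$ and the tangent line $\langle \ell^n,\ell^{n-1}m\rangle$ when $q=\ell^2$, and birationality is the classical uniqueness of the expression of a general binary $n$-ic ($n\ge 3$; the lemma is vacuous or false for $n\le 2$) as a sum of two $n$-th powers. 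The only place where your argument is heavier than necessary, and slightly backwards in spirit, is the cohomological closing: you import normality, projective normality and rational singularities of $\sigma(V)$ from its catalecticant description, whereas these are exactly the nontrivial facts that are usually \emph{deduced} from the desingularization $\eta$ rather than fed into it (and at the level of $H^0$ only normality, not rational singularities, would be relevant anyway); your fallback route has the same hidden content, since matching Hilbert functions against the Eagon--Northcott resolution of the catalecticant ideal presupposes that this ideal is the saturated ideal of $\sigma(V)$, again a theorem of Kanev type. A cleaner, fully self-contained finish is available: the evaluation map $S^k(S^nU^\vee)=S^kH^0(\PP^2,E)\ra H^0(\PP^2,S^kE)$ has kernel exactly $I(\sigma(V))_k$, because a degree-$k$ form pulls back to the zero section of $\cO_E(k)$ if and only if it vanishes on $\eta(\PP(E))=\sigma(V)$; hence it factors through an injection $\CC[\sigma(V)]_k\hookrightarrow H^0(\PP^2,S^kE)$, and surjectivity follows from the Koszul complex $\Lambda^jA\ot S^{k-j}(S^nU^\vee)\ot\cO_{\PP^2}$ associated to the two-term resolution \eqref{eq1}, with $A=S^{n-2}U^\vee\ot\cO_{\PP^2}(-1)$, using only the vanishings $H^1(\cO_{\PP^2}(-1))=H^2(\cO_{\PP^2}(-2))=0$. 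This proves the lemma with no input on the singularities of $\sigma(V)$ and yields its arithmetic normality as a corollary instead of an assumption.
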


Since $S^kE$ has no higher cohomology, as easily follows from \eqref{eq1},
we can compute
$H^0(\PP^2, S^kE)$ $=\chi(\PP^2, S^kE)$ as a (virtual) $GL(U)$-module, by using the equivariant
localization formula of Atiyah--Bott~\cite{AB}. In order to state the result we need a few more
notations. We consider a maximal torus $T$ in $GL(U)$, and a compatible basis
$(e,f)$ of $U$. We denote by $x$, $y$ the corresponding characters of $T$. For $g\in T$,
the formula reads
\begin{gather*}
{\rm Trace}(g,\chi(\PP^2, S^kE))=\sum_{p\in (\PP^2)^T}\frac{{\rm Trace}(g,S^kE_p)}{\det(1-g^{-1},T_p\PP^2)}.
\end{gather*}
Here $(\PP^2)^T$ denotes the set of f\/ixed points of the action of $T$ on $\PP^2$,
which are the three lines in $S^2U$ generated by $e^2$, $ef$, $f^2$. In order to get a compact
formula, we introduce an indeterminate $t$ and def\/ine
\begin{gather*}
P_n(t,x,y)=\sum_{k\ge 0}t^k\, {\rm Trace}(g,\chi(\PP^2, S^kE)).
\end{gather*}
Let us compute this in from the Atiyah--Bott localization formula. At the f\/ixed point
$p=[e^2]$, the f\/iber of $E$ is the space of degree $n$ polynomials modded out by those
divisible by $e^2$. A $T$-eigenbasis of $E_p$ is thus given by the images of $f^n,ef^{n-1}$.
The induced $T$-eigenbasis of $S^kE_p$ consists in the degree $k$ monomials in $e$, $f$, multiplied
by $f^{k(n-1)}$. Moreover, the $T$-module $T_p\PP^2={\rm Hom}(\langle e^2\rangle, \langle ef,f^2\rangle)$
has weights $y/x$ and $y^2/x^2$. The same straightforward analysis at the two other f\/ixed points
yields the formula
\begin{gather*}
{\rm Trace}(g,\chi(\PP^2, S^kE))=\frac{y^{k(n-1)}h_k(x,y)}{(1-\frac{x}{y})(1-\frac{x^2}{y^2})}
+\frac{x^{k(n-1)}h_k(x,y)}{(1-\frac{y}{x})(1-\frac{y^2}{x^2})}
+\frac{h_k(x^n,y^n)}{(1-\frac{x}{y})(1-\frac{y}{x})},
\end{gather*}
where $h_k(x,y)$ denotes the sum of degree $k$ monomials in $x$, $y$. Summing over $k$, we get
the closed expression $P_n(t,x,y)=P'_n(t,x,y)/(x-y)(x^2-y^2)$, where
\[
P'_n(t,x,y)=\frac{x^3}{(1-tx^n)(1-tx^{n-1}y)}+\frac{y^3}{(1-ty^n)(1-txy^{n-1})}
-\frac{xy(x+y)}{(1-tx^n)(1-ty^n)}.
\]
Extracting the coef\/f\/icient of $t^3$, we get the character $P_n^3(x,y)=Q_n^3(x,y)/(x-y)$
of the $GL(U)$-module $H^0(\PP^2, S^3E))=\CC [\sigma(V)]_3$, with
\[
Q_n^3(x,y)=x^{2n}(x^{n+1}+x^{n-1}y^2+\cdots +xy^n)-y^{2n}(x^ny+\cdots +x^2y^{n-1}+y^{n+1}).
\]
Since the character of the $GL(U)$-module $S^pU\otimes (\det U)^q$ is $(x^{p+q+1}y^q-x^qy^{p+q+1})/(x-y)$,
this means that
\[
\CC [\sigma(V)]_3 = S^{3n}U\oplus\bigoplus_{k=2}^{n}S^{3n-2k}U\otimes (\det U)^k.
\]
This is in agreement with Theorem~\ref{theorem5}, for $\omega=n\omega_1$, but taken with a grain of salt.
Indeed, all the formulas in Theorem~\ref{theorem5} are correct, with
\[
\Omega_i=2\omega-2i\omega_1,
\]
except for the f\/irst one for $k\ge 2$.  Indeed, the weights $\Omega_i$, in the special case
we are dealing with, are not linearly independent (recall that this is a degenerate case!).
To get a correct result from the f\/irst formula of Theorem~\ref{theorem5} (which is just an instance of the
Clebsch--Gordan formula), one needs to correct this by taking all non zero multiplicities to the
value one.

  One can go further and compute the character $P_n^k(x,y)=Q_n^k(x,y)/(x-y)$
of the $GL(U)$-module $H^0(\PP^2, S^kE)=\CC [\sigma(V)]_k$. Indeed, if $q_{n,s}^k$ is the
coef\/f\/icient of $x^{nk+1-s}y^s$ in this polynomial, one can deduce for $nk\ge 2s$ the induction
formula
\[
q_{n,s}^k-q_{n,s-2}^k=\delta_{s\le k}-\delta_{n|s-1}-\delta_{n|s-2}.
\]
This leads to the following statement:

\begin{proposition}
Let $\epsilon\in\{0,1\}$ be the parity of $s$, with $nk\ge 2s$.
The multiplicity of $S^{kn-2s}U\otimes (\det U)^s$ inside $\CC [\sigma(V)]_k$ is
\[
q_{n,s}^k=\min\left( \lfloor\frac{s}{2}\rfloor,\lfloor\frac{k-\epsilon}{2}\rfloor\right)-\lfloor\frac{s-1}{n}\rfloor.
\]
\end{proposition}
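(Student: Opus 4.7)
The plan is to take the stated induction formula
\[
q_{n,s}^k-q_{n,s-2}^k=\delta_{s\le k}-\delta_{n\,|\,s-1}-\delta_{n\,|\,s-2}
\]
as the engine, first justifying it cleanly from the explicit expression for $P'_n(t,x,y)$, and then solving it by telescoping. Writing $P_n^{\prime k}:=[t^k]P'_n=(x^2-y^2)Q_n^k$ and expanding $Q_n^k=\sum_s q_{n,s}^k\, x^{nk+1-s}y^s$ (which is homogeneous of degree $nk+1$), one obtains $[x^{nk+3-s}y^s]P_n^{\prime k}=q_{n,s}^k-q_{n,s-2}^k$. Extracting this coefficient from the three fractions of $P'_n$ via geometric-series expansion: the first fraction contributes $1$ precisely when $0\le s\le k$; the second fraction is supported near $s\approx nk+3$, hence contributes nothing in the range $nk\ge 2s$; and the two summands of the third fraction contribute $-1$ when $s-1$ or $s-2$ is a nonnegative multiple of $n$, with the geometric-series index $b=(s-1)/n$ or $(s-2)/n$ automatically in range. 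This gives the stated recurrence.

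Next I would dispose of the base cases. The value $q_{n,0}^k=1$ reflects the fact that $S^{kn}U$ is the Cartan component of $\mathbb{C}[\sigma(V)]_k\subset S^k(S^nU)$, and $q_{n,1}^k=0$ follows formally from the recurrence at $s=1$ with the convention $q_{n,-1}^k=0$. Setting $\epsilon=s\bmod 2$ and telescoping,
\[
q_{n,s}^k=q_{n,\epsilon}^k+\sum_{j=1}^{\lfloor s/2\rfloor}\bigl(\delta_{\epsilon+2j\le k}-\delta_{n\,|\,\epsilon+2j-1}-\delta_{n\,|\,\epsilon+2j-2}\bigr).
\]
The $\delta_{\cdot\le k}$ sum counts $j\in\{1,\ldots,\lfloor s/2\rfloor\}$ with $j\le(k-\epsilon)/2$, which is exactly $\min(\lfloor s/2\rfloor,\lfloor(k-\epsilon)/2\rfloor)$. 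As $j$ varies, the pair $(\epsilon+2j-2,\epsilon+2j-1)$ sweeps each integer in $[\epsilon,s-1]$ exactly once, so the divisibility sum equals the number of multiples of $n$ in this interval. For $\epsilon=0$ this equals $1+\lfloor(s-1)/n\rfloor$, and the $+1$ cancels against $q_{n,0}^k=1$; for $\epsilon=1$ it equals $\lfloor(s-1)/n\rfloor$, and $q_{n,1}^k=0$ absorbs cleanly. In both cases one lands on $\min(\lfloor s/2\rfloor,\lfloor(k-\epsilon)/2\rfloor)-\lfloor(s-1)/n\rfloor$, as claimed.

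The main obstacle is the coefficient extraction in the first step: one must carefully book-keep which monomials $x^{nk+3-s}y^s$ actually arise from each of the three rational summands of $P'_n$, and verify that in the hypothesis $nk\ge 2s$ the middle fraction drops out entirely and the $b$-index constraints in the third are automatic. Once the recurrence is in hand, the remaining telescoping is purely combinatorial, and the only subtlety is the consistent handling of the floor $\lfloor -1/n\rfloor=-1$ that makes the $\epsilon=0$ base case fit into the uniform formula.
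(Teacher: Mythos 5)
Your proposal is correct and follows the same route the paper takes: extract the recurrence $q_{n,s}^k-q_{n,s-2}^k=\delta_{s\le k}-\delta_{n\mid s-1}-\delta_{n\mid s-2}$ from the coefficient of $t^kx^{nk+3-s}y^s$ in $P'_n(t,x,y)$ (the paper states this deduction without detail) and then telescope to the closed form; your bookkeeping of the three fractions, the vanishing of the middle one in the range $nk\ge 2s$, and the treatment of $\lfloor -1/n\rfloor=-1$ are all sound. The only cosmetic remark is that the base value $q_{n,0}^k=1$ also falls out of the same coefficient extraction at $s=0$, so the appeal to the Cartan component, while fine, is not needed.
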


For example, for $k=4$ one gets that the multiplicity $q_{n,s}^4$ is equal to
$0$ for $s=1$, to $2$ for $s$ even and $4\le s\le 2n$, and to $1$ otherwise.
For $n=3$ the secant variety is the whole of $\PP^3$, and one recovers the
formulas of Chen--Garsia--Remmel for the symmetric powers of $S^3\CC^2$.

For the tangent surface $\tau(V)$ to the normal rational curve we have
\[
\CC[\tau(V)]_k=\bigoplus_{\substack{s\le k \\ s\ne 1}}S^{kn-2s}U\otimes (\det U)^s.
\]
We recover the fact that we have already observed for the spinor varieties: $\tau(V)$ has,
for $n\ge 4$, and contrary to $\sigma(V)$, some non trivial cubic equations.

It is tempting to imagine that very similar formulas could hold for the
other types
of Freudenthal varieties. Observe, nevertheless, that the ideal of
$\sigma(V)$
is generated by cubics (obtained as $3\times 3$ minors of catalecticant
matrices \cite{kanev}).
We have seen that this statement is not true for spinor varieties.

\subsection*{Acknowledgments} I thank J.M.~Landsberg, G.~Ottaviani
and J.~Weyman for useful discussions.

\pdfbookmark[1]{References}{ref}
\LastPageEnding

\end{document}